\documentclass[reqno]{amsart}

\usepackage{amsmath,amsthm,amsfonts,amssymb,young}
\usepackage{array, boldline, makecell, booktabs}
\usepackage{bm}
\usepackage{ytableau}
\usepackage{euscript, mathrsfs} 
\usepackage{tikz}
\usetikzlibrary{backgrounds}
\usepackage{amsmath}
\usepackage{mathtools}
\usepackage[colorlinks]{hyperref}
\usepackage{cleveref}
\usepackage[margin=1in]{geometry}
\numberwithin{equation}{section}    
\usepackage[all]{xy}
\usepackage{graphicx,import}
\usepackage{amscd}
\usepackage{color}
\usepackage{enumerate}
\usepackage{fourier}
\usepackage{tikz}
\usepackage{cases}
\usepackage{cite}
\usepackage[T1]{fontenc}

\newtheorem{thm}{Theorem}[section]
\newtheorem{lem}[thm]{Lemma}
\newtheorem{proposition}[thm]{Proposition}
\newtheorem{corollary}[thm]{Corollary}
\theoremstyle{definition}
\newtheorem{example}[thm]{Example}
\newtheorem{definition}[thm]{Definition}

\newtheorem{rmk}[thm]{Remark}

\newtheorem{question}[thm]{Question}

\DeclareMathOperator{\mix}{mix}

\DeclareMathOperator{\cPF}{cPF}
\DeclareMathOperator{\pdinv}{pdinv}
\DeclareMathOperator{\ldinv}{ldinv}
\DeclareMathOperator{\cont}{cont}

\DeclareMathOperator{\PF}{PF}

\DeclareMathOperator{\aseq}{aseq}
\DeclareMathOperator{\PTab}{PTab}

\DeclareMathOperator{\h}{\mathfrak{h}}
\DeclareMathOperator{\hhat}{\hat{\mathfrak{h}}}
\DeclareMathOperator{\hb}{\bar{\mathfrak{h}}}

\DeclareMathOperator{\SR}{Reg}
\DeclareMathOperator{\CH}{\mathbf{H}}
\DeclareMathOperator{\WD}{WD}
\DeclareMathOperator{\Reg}{Reg}
\DeclareMathOperator{\up}{up}
\DeclareMathOperator{\touch}{touch}

\DeclareMathOperator{\bo}{bot}
\DeclareMathOperator{\piv}{piv}

\DeclareMathOperator{\area}{area}

\DeclareMathOperator{\adj}{adj}

\DeclareMathOperator{\dinv}{dinv}

\DeclareMathOperator{\stat}{stat}

\DeclareMathOperator{\DR}{DR}

\DeclareMathOperator{\SW}{SW}

\DeclareMathOperator{\pos}{pos}

\title{Shuffle theorem for torus link homology}

\author{Donghyun Kim}
\address{Department of Mathematical Sciences \\ Seoul National University \\Seoul 151-247 \\ Korea}
\email{hyun920310@snu.ac.kr}

\author{Jaeseong Oh}
\address{Department of Mathematics \\ Sungkyunkwan University \\ Suwon \\ Korea}
\email{jaeseongoh@skku.edu}

\begin{document}

\begin{abstract}
We prove that the symmetric function $e_{(1^k)}[-MX^{m,n}] \cdot 1$, arising from the elliptic Hall algebra, equals the generating function for $k$-tuples of cyclic $(m,n)$-parking functions. This result resolves a conjecture of Gorsky--Mazin--Vazirani and Wilson, establishing that the elliptic Hall algebra governs the Khovanov--Rozansky homology of torus links $T(km,kn)$. Consequently, this provides an affirmative answer to a question of Galashin and Lam in the torus link case. As a key step in the proof, we develop a rational analogue of the Shareshian--Wachs involution originally introduced to prove the symmetry property of the chromatic quasisymmetric functions. 
\end{abstract}

\maketitle

\section{Introduction}\label{Sec: intro}
\subsection{Overview}

The \emph{nabla operator} $\nabla$, introduced by Bergeron and Garsia~\cite{BG99, BGHT99}, is a linear operator acting on symmetric functions with coefficients in $\mathbb{Q}(q,t)$. It is defined as the eigen-operator for the modified Macdonald polynomials $\widetilde{H}_\mu[X;q,t]$ \cite{Mac88,GH93},
\(
    \nabla \widetilde{H}_\mu[X;q,t] = T_\mu \widetilde{H}_\mu[X;q,t],
\)
where $T_\mu \coloneqq \prod_{(i,j)\in\mu}t^{i-1}q^{j-1}$. 

The significance of $\nabla$ lies in its deep connection to the \emph{diagonal coinvariant ring}. This ring is the symmetric group $\mathfrak{S}_k$-module defined by
\[
    \DR_k \coloneqq \mathbb{C}[\mathbf{x}_k,\mathbf{y}_k]/\langle\mathbb{C}[\mathbf{x}_k,\mathbf{y}_k]^{\mathfrak{S}_k}_+\rangle,
\]
where $\mathfrak{S}_k$ acts diagonally on the polynomial ring $\mathbb{C}[\mathbf{x}_k,\mathbf{y}_k]=\mathbb{C}[x_1,\dots,x_k,y_1,\dots,y_k]$ in $2k$ variables. Haiman's celebrated proof of the \emph{$(n+1)^{n-1}$ conjecture}~\cite{Hai02} established that the Frobenius character of $\DR_k$ is given by $\nabla e_k$, where $e_k$ is the elementary symmetric function of degree $k$.

The \emph{shuffle theorem}, conjectured by Haglund et al.~\cite{HHLRU05} and proven by Carlsson and Mellit~\cite{CM18}, provides a combinatorial formula for $\nabla e_k$. This formula expresses $\nabla e_k$ as a weighted sum over parking functions:
\[
    \nabla e_k = \sum_{(P,f) \in \PF_{k}} q^{\dinv(P,f)}t^{\area(P)} x^{f},
\]
where $\PF_k$ denotes the set of parking functions of size $k$.

The shuffle theorem exemplifies the deep interplay between algebra, geometry, and combinatorics in modern algebraic combinatorics. Since its proof, the action of $\nabla$ on other symmetric functions has been a subject of intense study. For instance, the Elias--Hogancamp formula~\cite{EH19, GH22} provides a combinatorial formula for $\langle\nabla e_{(1^k)},e_k\rangle$. More recently, Blasiak, Haiman, Morse, Pun, and Seelinger~\cite{BHMPS25LW}, and independently the authors~\cite{KO24}, proved the Loehr--Warrington conjecture~\cite{LW08}, establishing a combinatorial formula for $\nabla s_\lambda$, where $s_\lambda$ is the \emph{Schur function}.

The landscape of the shuffle theorem was further broadened by the introduction of the `rational analogue'. Mellit~\cite{Mel21} generalized the shuffle theorem to the \emph{rational (compositional) shuffle theorem} for $(m,n)$-parking functions with coprime $(m,n)$.
Building on the theory of \emph{elliptic Hall algebra} ${\mathcal{E}_{q,t}}$ (also known as the Schiffmann algebra) of Burban and Schiffmann~\cite{BS12}, Blasiak et al.~\cite{BHMPS23, BHMPS25LW} established a unified framework for `rationalizing' shuffle theorems. Within this framework, they proved the rational version of the Loehr--Warrington conjecture involving the elliptic Hall algebra element $s_\lambda[-MX^{m,n}]\in\mathcal{E}_{q,t}$ (notation explained in Section~\ref{subsec: EHA}). In this context, the algebraic side of the rational shuffle theorem corresponds to the action of the operator $e_k[-MX^{m,n}]\in\mathcal{E}_{q,t}$.

While these foundational results provide combinatorial formulas for a broad class of symmetric functions, they do not address the distinct combinatorial structure arising from the product of elementary symmetric functions $e_{(1^k)}[-MX^{m,n}] \cdot 1$, namely, the rational analogue of the Elias--Hogancamp formula.

The main theorem of this paper resolves this issue. Building on partial conjectures of Gorsky, Mazin, and Vazirani~\cite{GMV17}, Wilson~\cite{Wil18,Wil23} conjectured that the symmetric function $e_{(1^k)}[-MX^{m,n}] \cdot 1$ is the combinatorial generating function for $k$-tuples of cyclic $(m,n)$-parking functions. We refer the reader to Section~\ref{Sec: rcs} for the relevant combinatorial definitions.

\begin{thm}(\cite[Conjecture 5.1]{Wil23})\label{thm: main}
    For coprime positive integers $(m,n)$ and a positive integer $k$, we have
    \begin{equation*}
        e_{(1^k)}[-MX^{m,n}]\cdot1 = (1-t)^k\sum_{\pi^\bullet \in\cPF^k_{m,n}} q^{\stat(\pi^\bullet)} t^{\area(P_{\pi^\bullet})} x^{f_{\pi^{\bullet}}}.  
    \end{equation*}
\end{thm}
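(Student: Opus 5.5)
Our plan is to reduce the theorem to the Blasiak--Haiman--Morse--Pun--Seelinger (BHMPS) framework and then do a combinatorial matching. First, expand $e_{(1^k)}[-MX^{m,n}]$ in the elliptic Hall algebra. The product $e_1[-MX^{m,n}]^k$ is a product of $k$ copies of the generator attached to the lattice point $(m,n)$. These copies commute up to the relations of $\mathcal{E}_{q,t}$ along the ray of slope $n/m$. Via the plethystic identity $e_{(1^k)} = \sum_{\lambda} K_{\lambda',(1^k)}\, s_\lambda$, equivalently $e_1^k=\sum_\lambda f^\lambda s_\lambda$, we can write
\[
e_{(1^k)}[-MX^{m,n}]\cdot 1=\sum_{\lambda\vdash k} f^{\lambda}\, s_\lambda[-MX^{m,n}]\cdot 1 .
\]
Each summand is computed by the rational Loehr--Warrington formula of BHMPS, which gives a signed LLT-type sum over nested lattice paths under the line of slope $n/m$ in a $km\times kn$ rectangle. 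Summing over $\lambda$ with weights $f^\lambda$ replaces the Schur-indexed nests by tuples of $(m,n)$-Dyck paths with labels. We expect the factor $(1-t)^k$ to come out of the BHMPS ``Catalanimal'' or raising-operator expression, namely the factor attached to the $k$ points of $(km,kn)$ that lie on the diagonal.

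Second, we identify the resulting expression with the generating function of $k$-tuples of cyclic $(m,n)$-parking functions. A cyclic tuple $\pi^\bullet$ comes with an underlying path $P_{\pi^\bullet}$ and a labeling $f_{\pi^\bullet}$. Our plan is to write the BHMPS side as a sum over words, or LLT data, $W$ weighted by $q^{\dinv(W)}t^{\area(W)}x^W$ together with signs. The signs come from the Schur expansion, i.e.\ from the straightening of $s_\lambda$ into products of $e$'s or $h$'s. We then match the terms fiberwise over each fixed area and content.

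The main obstacle is cancelling these signs. This is where we would build a rational analogue of the Shareshian--Wachs involution. In the chromatic setting, that involution swaps runs of labels between adjacent incomparable positions, preserving ascents relative to a Hessenberg/unit-interval order. Here we would instead take the attacking graph of the rational LLT data, i.e.\ cells attacking relative to the slope $n/m$ line. We define an involution on pairs (word, sign) that toggles maximal alternating chains of attacking cells. It should preserve $\area$ and content and change $\dinv$ in a controlled way, with fixed points exactly the cyclic parking function tuples, whose $\dinv$ becomes $\stat$. The delicate points are that the $(m,n)$ attacking relation is not a unit-interval order globally, so we must restrict to the cyclically-ordered local windows, and that we must check $q$-weight preservation under the swap.

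Finally, we verify the identity at $q=1$ or $t=1$, or for $k=1$ against Mellit's rational shuffle theorem, as a consistency check on the normalization $(1-t)^k$. We then conclude by summing over all compositions $f$.
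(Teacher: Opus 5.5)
Your proposal has a genuine gap at its core. Nothing in it produces the factor $(1-t)^k$ or the infinite series it multiplies, and the sign-reversing involution cannot work as described.

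First, the two sides have different sizes. For a fixed monomial $x^\alpha$, the BHMPS nests-in-a-den side is a finite sum, i.e.\ a polynomial in $q,t$. By contrast, $\sum_{\pi^\bullet\in\cPF^k_{m,n}}q^{\stat(\pi^\bullet)}t^{\area(P_{\pi^\bullet})}x^{f_{\pi^\bullet}}$ is an infinite power series in $t$, since each component may start at $(-a,0)$ for arbitrary $a$. So no involution on nests can have the cyclic tuples as its fixed points.

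Second, $(1-t)^k$ cannot be ``extracted'' from the BHMPS side. Take $m=n=1$, $k=2$: there $\nabla s_2=-qt\,s_{11}$, so $\nabla e_1^2=s_2+(q+t-qt)s_{11}$. This is not sign-definite, so it cannot be a positive sum of fixed points. It is also not divisible by $1-t$: at $t=1$ it equals $s_2+s_{11}$. Hence $(1-t)^k$ only makes sense against the infinite series.

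Third, there is no sign source of the kind you describe. Since $e_1^k=\sum_\lambda f^\lambda s_\lambda$ has positive coefficients, the only signs on your side are the global signs attached to each whole $s_\lambda$-term, like $(-1)^{\adj(\lambda)}$ in the Loehr--Warrington formula. Any cancellation would therefore have to pair nests from different dens for different $\lambda$, with multiplicities $f^\lambda$. Your sketch of toggling alternating chains of attacking cells gives no indication of how to do that.

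The missing idea is a mechanism that generates the geometric series in $t$; the paper uses the following one.
\begin{enumerate}
\item Split chains as $\CH_a=\bar{\CH}_a\sqcup\hat{\CH}_a$, according to whether the first path passes through the origin, so that $\h_a=\hb_a+\hhat_a$.
\item Applied componentwise, the map $\up$ is a bijection $\CH_a\to\hat{\CH}_a$ that raises area by $a$. By the cycling rule it preserves $\stat$, which gives $\Omega(v\,\hhat_a\cdot 1)=t^a\,\Omega(\h_a v\cdot 1)$.
\item Combine this with the operator formula $f[-MX^{m,n}]\cdot 1=\Omega(\Phi(f)\cdot 1)$, where $\Phi$ replaces $h_\lambda[Y]h_\mu[Z]$ in $f[Y-Z]$ by $\h_\lambda\hhat_\mu$. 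This turns $f[Y-Z]$ into $f[(1-t)Y]$.
\item For $f=e_1^k$ this yields $(1-t)^k\,\Omega(\h_1^k\cdot 1)$ directly, with no sign cancellation at all.
\end{enumerate}

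The operator formula in step 3 does not come from BHMPS. It comes from Mellit's rational compositional shuffle theorem, reformulated through a bijection with $P$-tableaux of cyclic parking functions. It is then extended to Schur functions by Jacobi--Trudi determinants in the operators $\h,\hhat,\hb$.

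The rational Shareshian--Wachs involution plays a different role from the one you assign it. It is used only to prove the commutation relations $\h_a\h_b=\h_b\h_a$ (and likewise for $\hhat$ and $\hb$), and the $q$-commutation relation between $\hhat$ and $\hb$. Those relations are what the determinant manipulations require; the involution is not used to cancel signs in an LLT expansion.
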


\subsection{Connection to Knot Homology}

Notably, Wilson and Gorsky--Mazin--Vazirani conjectured Theorem~\ref{thm: main}, motivated by their study of knot homology. This connection deserves further elaboration. Given a link $L$, recording the graded dimension of its Khovanov--Rozansky (KR) homology~\cite{KR08a,KR08b,Kho07} yields a Laurent polynomial $\mathcal{P}^{\operatorname{KR}}_L(a,q,t)$ in $a, q^{1/2},t^{1/2}$, known as the \emph{knot superpolynomial}~\cite{DGR06}. If the odd cohomology of the KR homology vanishes, the superpolynomial becomes a Laurent polynomial in $a,q,t$. When specializing at $t=q^{-1}$, it recovers the HOMFLY polynomial of $L$.

The Khovanov--Rozansky homology and the knot superpolynomial provide a central interface between diverse mathematical disciplines. These invariants encode the geometry of braid varieties and cluster algebras, emerge from Hilbert schemes and compactified Jacobians through the Oblomkov--Rasmussen--Shende conjectures, and interact deeply with the elliptic Hall algebra, rational Cherednik algebras, and generalized Catalan combinatorics~\cite{Che13, GM13, GORS14, GN15, CD16, CD17, OR17, ORS18, CGGS21, GNR21, Mel21, GLSBS22, GL23, CGHM24, GL24, CGGLLS25}.

Recently, Galashin and Lam~\cite{GL23} posed a question relating the elliptic Hall algebra to knot superpolynomials. A \emph{curve} is defined as the graph of a strictly increasing continuous function $f : [0, m] \rightarrow [0, n]$ satisfying $f(0) = 0$ and $f(m) = n$. To a curve $C$, one associates a link $L_C$ in the torus $\mathbb{T}=\mathbb{R}^2/\mathbb{Z}^2$. They also constructed an elliptic Hall algebra element $D_C$ associated with the curve $C$. Using this element, one defines a symmetric function $F_C$ and the \emph{EHA superpolynomial} $\mathcal{P}^{\mathcal{E}}_C(a,q,t)$ using plethystic substitution by
\[
    F_C:=D_C\cdot1\left[\dfrac{X}{1-t}\right],\qquad 
    \mathcal{P}^{\mathcal{E}}_{C}(a,q,t):= F_C[a-a^{-1}].
\]

\begin{question}[\cite{GL23}]\label{conj: Galashin--Lam} 
For which curves $C$ do we have
\begin{equation}\label{eq: GL conjecture}
     \mathcal{P}^{\mathcal{E}}_{C}(a,q,t)= (1-t)^{k(C)-1}\mathcal{P}^{\operatorname{KR}}_{L_C}(a,q,t),
\end{equation}
where $k(C)$ denotes the number of components of the link $L_C$?
\end{question}

The correspondence proposed by Galashin and Lam is consistent with established results. For torus knots, the relationship follows from the earlier work of Mellit~\cite{Mel21} combined with Gorsky and Neguț~\cite{GN15}. More recently, Caprau, Gonzalez, Hogancamp, and Mazin~\cite{CGHM24} extended this verification to the case of \emph{Coxeter knots} by utilizing the results of \cite{BHMPS23} on the shuffle theorem under any line.

Despite the verification for torus and Coxeter knots, an explicit formula for the elliptic Hall algebra action corresponding to torus links has not been established. On the homological side, Gorsky, Mazin, and Vazirani~\cite{GMV17} and Wilson~\cite{Wil23} showed that the KR homology of the torus link $T(km, kn)$ is described by the combinatorics of $k$-tuples of cyclic $(m,n)$-parking functions. They further conjectured that this combinatorial structure arises algebraically from the action of $e_{(1^k)}[-MX^{m,n}]$.

Theorem~\ref{thm: main} confirms this conjecture, answering the question of Galashin--Lam for the torus link case. In light of this correspondence, Theorem~\ref{thm: main} can be regarded as the \emph{shuffle theorem for torus link homology}.

\begin{corollary}\label{Cor: torus link superpolynomial = EHA superpolynomial}
    For the straight line segment $C(km,kn)$ connecting $(0,0)$ to $(km,kn)$, the associated link is the torus link $T(km,kn)$, and the identity \eqref{eq: GL conjecture} holds:
    \[
         \mathcal{P}^{\mathcal{E}}_{C(km,kn)}(a,q,t)= (1-t)^{k-1}\mathcal{P}^{\operatorname{KR}}_{T(km,kn)}(a,q,t).
    \]
\end{corollary}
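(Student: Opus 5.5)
The corollary should follow by combining Theorem~\ref{thm: main} with the homological results of Gorsky--Mazin--Vazirani~\cite{GMV17} and Wilson~\cite{Wil23}; there is no new combinatorics to do. I will carry it out in four steps.

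\emph{Step 1 (identify $D_C$).} For the straight segment $C=C(km,kn)$, I would check from the Galashin--Lam definition that $D_C$ is the element $e_{(1^k)}[-MX^{m,n}]$. The segment passes through the lattice points $(jm,jn)$ for $j=0,\dots,k$ and through no other lattice points. So it splits into $k$ translates of the primitive segment from $(0,0)$ to $(m,n)$. In the construction of~\cite{GL23} each primitive piece contributes $e_1[-MX^{m,n}]$, and the pieces are multiplied in order. The only care needed here is normalization: signs and powers of $q,t$ in the definition of $D_C$ must match those in $e_{(1^k)}[-MX^{m,n}]$. I expect to take this from the explicit formulas in~\cite{GL23}.

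\emph{Step 2 (apply the main theorem).} Theorem~\ref{thm: main} then gives
\[
F_C=(1-t)^k\sum_{\pi^\bullet\in\cPF^k_{m,n}} q^{\stat(\pi^\bullet)}t^{\area(P_{\pi^\bullet})}x^{f_{\pi^\bullet}}\Big[\tfrac{X}{1-t}\Big].
\]
Specializing by $X\mapsto a-a^{-1}$ yields $\mathcal{P}^{\mathcal{E}}_C(a,q,t)$. In plethystic terms this is $X\mapsto (a-a^{-1})/(1-t)$ applied to the monomial generating function.

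\emph{Step 3 (invoke the homology side).} Wilson~\cite{Wil23}, building on~\cite{GMV17}, expresses $\mathcal{P}^{\operatorname{KR}}_{T(km,kn)}$ as exactly this specialization of the $\cPF^k_{m,n}$ generating function, up to an explicit factor $(1-t)$. This should come from the Hogancamp--Mellit / Elias--Hogancamp recursions for torus link homology, rewritten combinatorially. Substituting this into Step 2 produces the factor $(1-t)^{k-1}$. Here $k(C)=k$, since $T(km,kn)$ has $\gcd(km,kn)=k$ components.

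\emph{Step 4 (conventions).} The remaining check is bookkeeping. The grading conventions for $(a,q,t)$ in Wilson's statement must agree with those used in~\cite{GL23}. I would also verify that the identification $L_C=T(km,kn)$ holds, which is standard for lines of rational slope in $\mathbb{T}$.

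I expect Steps 1 and 4 to be the main obstacle: matching the normalizations of $D_C$, the plethysm, and the superpolynomial gradings across the three sources. I would first test the case $k=1$, where the statement must reduce to the known torus knot case~\cite{Mel21,GN15}, and use it to pin down the conventions.
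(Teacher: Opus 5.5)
Your proposal is correct and follows the same route as the paper. The paper gives no separate argument for this corollary: it identifies $D_{C(km,kn)}$ with $e_{(1^k)}[-MX^{m,n}]$, applies Theorem~\ref{thm: main}, and invokes the Gorsky--Mazin--Vazirani and Wilson description of the KR homology of $T(km,kn)$ via $k$-tuples of cyclic $(m,n)$-parking functions. Your normalization checks in Steps 1 and 4 are exactly the bookkeeping the paper leaves implicit.
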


For a certain class of curves called $\mathbb{Z}$-convex, Galashin and Lam~\cite{GL23} conjectured a generalization of the previous conjecture by BHMPS~\cite{BHMPS23} regarding the Schur positivity of the symmetric function $F_C/(1-t)^{k(C)-1}$. Since the right-hand side of Theorem~\ref{thm: main} (also \eqref{eq: Wilson compact}) can be written as a positive sum of LLT polynomials (see~\cite{Wil23}), this implies the Schur positivity of $F_{C(km,kn)}/(1-t)^{k-1}$, confirming the Schur positivity conjecture of~\cite{GL23,BHMPS23} for the case of $C=C(km,kn)$.

\subsection{Main Results: Jacobi--Trudi Identities and Combinatorial Operators}

Our proof of Theorem~\ref{thm: main} relies on a fundamental reformulation of the rational shuffle theorem. We introduce a new determinantal approach to the algebraic operators. Specifically, this approach hinges on defining a family of operators, denoted by $\mathfrak{h}$, which generate `strictly increasing' cyclic $(m,n)$-parking functions (see Sections~\ref{Sec: rcs} and \ref{Sec: wil} for precise definitions). We then establish a new formulation for the rational compositional shuffle theorem, expressing the algebraic term $C_{\alpha}[-MX^{m,n}] \cdot 1$ as the determinant of a matrix whose entries are given by these $\mathfrak{h}$-operators (Lemma~\ref{lem: C alpha operator expression}).

This determinantal structure proves to be highly versatile; it allows us to derive a corresponding Jacobi--Trudi identity for the general Schur function action $s_\lambda[-MX^{m,n}] \cdot 1$. As a consequence, we provide a new, elementary proof of the Loehr--Warrington conjecture for $\nabla^m s_\lambda$ (corresponding to the case $n=1$). In particular, we give a concrete combinatorial tie between the compositional shuffle formula for $\nabla^m C_{\alpha}$ and the Loehr--Warrington formula for $\nabla^m s_\lambda$. We also successfully express $e_{(1^k)}[-MX^{m,n}] \cdot 1$ in terms of $k$-tuples of cyclic $(m,n)$-parking functions.

The combinatorial heart of our argument, presented in Section~\ref{Sec: sw}, lies in establishing the necessary relations between the $\mathfrak{h}$-operators. To achieve this, we introduce an \emph{$(m,n)$-generalization of the Shareshian--Wachs involution}. Originally developed to prove the symmetry of chromatic quasisymmetric functions \cite{SW16}, this involution is adapted here to address the combinatorics of cyclic parking functions, thereby proving the key relations required for our determinantal formulas.

\section*{acknowledgement}
The authors are grateful to Eugene Gorsky, Thomas Lam, Anton Mellit, Andrei Negu\c{t}, Brendon Rhoades, George Seelinger, Joshua Wen, and Andy Wilson for helpful conversations. J. Oh was supported by NRF grant RS-2025-16067413.

\section{Preliminaries}\label{Sec: pre}
\subsection{Symmetric functions}
A \emph{composition} is a sequence 
\(
    \alpha = (\alpha_1, \alpha_2, \dots, \alpha_k)
\)
of positive integers (called \emph{parts}) and we say that $\alpha$ is a composition of $n$, denoted by $\alpha\models n$, if $|\alpha| = \alpha_1 + \alpha_2 + \cdots + \alpha_k$ equals $n$.
A \emph{partition} is a composition whose parts are weakly decreasing.
We write $\lambda \vdash n$ if $|\lambda|=n$.
We often identify a partition $\lambda$ with its \emph{Young diagram}, defined as the set of cells
\(
    \lambda = \{(i,j) \in \mathbb{Z}_+ \times \mathbb{Z}_+ : 1 \leq i \leq k, \, 1 \leq j \leq \lambda_i\}.
\)
The \emph{conjugate partition} $\lambda^{t} = (\lambda^{t}_1, \lambda^{t}_2, \dots)$ is obtained by reflecting the Young diagram of $\lambda$ across the main diagonal $y = x$. For partitions $\lambda$ and $\mu$ such that $\mu \subseteq \lambda$ (as sets of cells), the \emph{skew partition} $\lambda/\mu$ is defined as the set difference $\lambda \setminus \mu \subset \mathbb{Z}_+ \times \mathbb{Z}_+$.

Let $\Lambda$ be the algebra of symmetric functions in an infinite set of variables $X = \{x_1, x_2, \dots\}$ with coefficients in the field $\mathbb{Q}(q,t)$. We follow standard notation for the various bases of $\Lambda$: $e_\lambda$, $h_\lambda$, $p_\lambda$, $m_\lambda$, and $s_\lambda$ denote the elementary, complete homogeneous, power sum, monomial, and Schur symmetric functions, respectively. The involution $\omega: \Lambda \to \Lambda$ is the algebra automorphism defined on the Schur basis by $\omega(s_{\lambda}) = s_{\lambda^{t}}$.

We equip $\Lambda$ with the \emph{Hall inner product} $\langle \cdot, \cdot \rangle$ by requiring that the Schur functions form an orthonormal basis; that is, $\langle s_\lambda, s_\mu \rangle = \delta_{\lambda,\mu}$. For a symmetric function $f$, we let $f^\bullet$ denote the operator of multiplication by $f$. We denote by $f^\perp$ the adjoint of $f^\bullet$ with respect to the Hall inner product.

We introduce some additional notation for \emph{plethystic substitutions}. If $A$ is an expression in terms of indeterminates (including $q, t$), we define $p_k[A]$ to be the result of substituting $a^k$ for every indeterminate $a$ occurring in $A$. Furthermore, for any $f \in \Lambda$, the plethysm $f[A]$ is defined by substituting $p_k[A]$ for $p_k$ in the power sum expansion of $f$. By convention, the alphabet $X$ is identified with the sum $x_1 + x_2 + \dots$, so that $f[X]$ is the standard symmetric function $f(x_1, x_2, \dots)$.

Haglund, Morse, and Zabrocki \cite{HMZ12} defined an operator $\mathbf{C}_a$ for a positive integer $a$ acting on $\Lambda$ by:
\begin{equation*}
    \mathbf{C}_a f[X]=(-q)^{1-a}f[X-(q-1)/(qz)]\sum_{m\geq 0}z^{m}h_m[X]\vert_{z^a}.
\end{equation*}
For a composition $\alpha=(\alpha_1,\dots,\alpha_{k})$, we define
\begin{equation*}
C_{\alpha}=\mathbf{C}_{\alpha_k}\cdots\mathbf{C}_{\alpha_2}\cdot\mathbf{C}_{\alpha_1}\cdot 1.
\end{equation*}
The symmetric function $C_{\alpha}$ played a key role in refining the shuffle conjecture as the compositional shuffle conjecture \cite{HMZ12}, which eventually led to the rational compositional shuffle theorem \cite{Mel21}.

\subsection{Elliptic Hall algebra}\label{subsec: EHA}

Let us collect some definitions and results concerning the \emph{elliptic Hall algebra} $\mathcal{E}_{q,t}$ as introduced in \cite{BS12}. We follow the notation of \cite{BHMPS21Delta}. 

The elliptic Hall algebra $\mathcal{E}_{q,t}$ is generated by subalgebras $\Lambda(X^{m,n})$ isomorphic to the algebra of symmetric functions $\Lambda$, indexed by pairs of coprime integers $(m,n)$, along with a central Laurent polynomial subalgebra $\mathbb{Q}(q,t)[c_1^{\pm1}, c_2^{\pm1}]$, subject to the relations described in \cite[Section 3.3]{BHMPS21Delta}. 

We now recall a natural action of the elliptic Hall algebra on the ring of symmetric functions constructed in \cite{FT11,SV13}. Let $\widetilde{H}_\mu[X;q,t]$ denote the \emph{modified Macdonald polynomials} \cite{Mac88}. The \emph{diagram generator} $B_\lambda(q,t)$ is defined by
 $\sum_{(i,j)\in \lambda} q^{i-1}t^{j-1}$. For a symmetric function $f \in \Lambda$, let $f[B]$ and $f[\bar{B}]$ denote the operators defined by their action on the basis $\{\widetilde{H}_\mu\}$ as follows:
\[
    f[B]\widetilde{H}_\mu = f[B_\mu(q,t)]\widetilde{H}_\mu, \qquad f[\bar{B}]\widetilde{H}_\mu = f[\overline{B_\mu(q,t)}]\widetilde{H}_\mu,
\]
where $\overline{B_\mu(q,t)} = B_\mu(q^{-1},t^{-1})$. The following proposition summarizes the action of $\mathcal{E}_{q,t}$ on $\Lambda$.
\begin{proposition}\cite[Proposition 3.1]{BHMPS23}\label{Prop: E action on Lambda} 
There is an action of $\mathcal{E}_{q,t}$ on $\Lambda$ defined on generators as follows:
\begin{enumerate}
    \item $c_1 \mapsto 1$, $c_2 \mapsto (qt)^{-1}$.
    \item $f(X^{1,0}) \mapsto (\omega f)\left[B-\frac{1}{M}\right]$, \quad $f(X^{-1,0}) \mapsto (\omega f)\left[\overline{\frac{1}{M}-B}\right]$.
    \item $f(X^{0,1}) \mapsto f\left[-\frac{X}{M}\right]^\bullet$, \quad $f(X^{0,-1}) \mapsto f[X]^\perp$.
\end{enumerate}
\end{proposition}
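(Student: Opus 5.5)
This is \cite[Proposition 3.1]{BHMPS23}. My plan is to deduce it from the known constructions of \cite{FT11,SV13} rather than to verify the defining relations of $\mathcal{E}_{q,t}$ from scratch; the only real work is a change of normalization.

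\textbf{Step 1: reduce to generators.} Schiffmann--Vasserot realize $\mathcal{E}_{q,t}$ (specialized at the stated central characters) acting on $\bigoplus_k K^{T}(\mathrm{Hilb}_k(\mathbb{C}^2))\cong\Lambda$. Under this identification:
\begin{itemize}
\item the fixed-point classes correspond to the modified Macdonald polynomials $\widetilde{H}_\mu$;
\item the elements of slope $(\pm1,0)$ act diagonally, through tautological-bundle characters, and these characters are precisely $f[B_\mu]$ and $f[\overline{B_\mu}]$ up to a constant shift;
\item the elements of slope $(0,\pm1)$ act by the correspondences adding or removing a point, which become multiplication by $f[-X/M]$ and the adjoint $f[X]^\perp$.
\end{itemize}
Since $\mathcal{E}_{q,t}$ is generated by the subalgebras $\Lambda(X^{\pm1,0})$ and $\Lambda(X^{0,\pm1})$ together with the central elements, it suffices to show that the assignments (1)--(3) agree with this action on these generators.

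\textbf{Step 2: transport the conventions.} Fix the identification of the power-sum generators $P_{\pm1,0}^{(r)}$ and $P_{0,\pm1}^{(r)}$ of Burban--Schiffmann with $p_r(X^{\pm1,0})$ and $p_r(X^{0,\pm1})$, using the normalization of \cite[Section 3.3]{BHMPS21Delta}. Then compute the images:
\begin{itemize}
\item $p_r(X^{0,1})$ must map to $p_r[-X/M]^\bullet$, which accounts for the plethystic factor $-1/M$;
\item $p_r(X^{1,0})$ must act on $\widetilde H_\mu$ by $(\omega p_r)[B_\mu - 1/M]$, where the term $-1/M$ is the constant shift coming from the empty partition;
\item the $\omega$ and the bar arise from the sign conventions $\omega p_r=(-1)^{r-1}p_r$ and from the dual choice of the slope $(-1,0)$ generators.
\end{itemize}

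\textbf{Step 3: a consistency check on the central charges.} Verify, for instance, that $[p_1(X^{1,0}),p_1(X^{0,1})]$ acts as the slope $(1,1)$ element predicted by the relations, with central charges $c_1=1$ and $c_2=(qt)^{-1}$. Concretely, this is the operator identity
\[
\nabla\, e_1^\bullet\, \nabla^{-1} = -D_1
\]
(up to normalization), which follows from the Pieri rule for $e_1\widetilde H_\mu$ together with the diagonal action of $\nabla$. The slope $(0,-1)$ versus $(0,1)$ commutator, $[e_1^\perp,e_1^\bullet]$ restricted to $\Lambda$, fixes the remaining charge.

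\textbf{Main obstacle.} I expect the difficulty to lie entirely in Steps 2 and 3: making the signs, the powers of $qt$, and the factor $M=(1-q)(1-t)$ consistent between the geometric normalization of \cite{SV13} and the plethystic normalization used here. If a direct citation does not close the gap, I would fall back on checking, on the basis $\widetilde{H}_\mu$, the generating relations among the degree-one elements. These relations reduce to the Pieri coefficients for $e_1\widetilde H_\mu$ and $e_1^\perp\widetilde H_\mu$ together with the Garsia--Haiman--Tesler identities.
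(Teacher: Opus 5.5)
Your proposal matches the paper's treatment: the paper gives no independent proof and simply imports the result from \cite[Proposition 3.1]{BHMPS23}, remarking that the action is the one constructed in \cite{FT11,SV13}, which is the same citation-plus-renormalization route you describe. Your Steps 2 and 3 only restate the target formulas and run consistency checks (e.g.\ $\nabla e_1^\bullet\nabla^{-1}=-D_1$), so the real content still rests on the cited identification with the Schiffmann--Vasserot/Feigin--Tsymbaliuk action, exactly as in the paper.
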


The \emph{nabla operator} $\nabla$, defined in \cite{BGHT99}, is the diagonal operator with the modified Macdonald polynomials as eigenfunctions:
\[
    \nabla \widetilde{H}_\mu = t^{n(\mu)}q^{n(\mu')}\widetilde{H}_\mu,
\]
where $n(\mu)=\sum_i (i-1)\mu_i$. While the operator $\nabla$ does not directly correspond to an action of an element of  $\mathcal{E}_{q,t}$ on $\Lambda$, it satisfies the following commutation relation with the generators:
\begin{lem}\label{Lem: nabla conjugation}\cite[Lemma 3.4.1]{BHMPS23} 
Conjugation by $\nabla$ acts as a symmetry of the $\mathcal{E}_{q,t}$-action on $\Lambda$. Specifically, we have
    \[
    \nabla f(X^{m,n})\nabla^{-1} = f(X^{m+n,n}).
    \]
\end{lem}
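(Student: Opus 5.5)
We argue directly from the action in Proposition~\ref{Prop: E action on Lambda}. Let $\tau$ be the automorphism of $\mathcal{E}_{q,t}$ attached to the matrix $\begin{pmatrix}1&1\\0&1\end{pmatrix}\in SL_2(\mathbb{Z})$ (Burban--Schiffmann). It sends $f(X^{m,n})\mapsto f(X^{m+n,n})$ and acts on the central elements by a suitable monomial change. Let $\rho:\mathcal{E}_{q,t}\to\operatorname{End}(\Lambda)$ be the action of Proposition~\ref{Prop: E action on Lambda}. Conjugation by $\nabla$ is an algebra automorphism of $\operatorname{End}(\Lambda)$, so both $x\mapsto \nabla\rho(x)\nabla^{-1}$ and $x\mapsto\rho(\tau x)$ are algebra homomorphisms $\mathcal{E}_{q,t}\to\operatorname{End}(\Lambda)$. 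It therefore suffices to check that they agree on a generating set of $\mathcal{E}_{q,t}$. We take the central elements together with $p_1(X^{\pm1,0})$ and $p_1(X^{0,\pm1})$, which generate $\mathcal{E}_{q,t}$ by \cite{BS12}.

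\textbf{Central elements and $X^{\pm1,0}$.} The central elements act by scalars, and we must check that $\tau$ is compatible with the specialization $c_1\mapsto1$, $c_2\mapsto(qt)^{-1}$. This holds because $\tau$ fixes $c_2$ and, for this specialization, only multiplies $c_1$ by a power of $c_2$ that acts trivially, provided one uses the normalization of \cite{BHMPS21Delta}. The elements $p_1(X^{\pm1,0})$ act through $f[B]$ and $f[\bar B]$, which are diagonal in the basis $\widetilde H_\mu$, as is $\nabla$. Hence they commute with $\nabla$, and $\tau$ fixes $(\pm1,0)$. So the identity holds on these generators.

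\textbf{The generators $p_1(X^{0,\pm1})$.} This is the substantive step. Here $\rho(p_1(X^{0,1}))$ is multiplication by $-e_1/M$, and $\rho(p_1(X^{0,-1}))=e_1^\perp$. On the other side, we use the defining relation of $\mathcal{E}_{q,t}$. It expresses $p_1(X^{1,1})$ (respectively $p_1(X^{-1,-1})$) as a commutator $[p_1(X^{1,0}),p_1(X^{0,1})]$ (respectively $[p_1(X^{-1,0}),p_1(X^{0,-1})]$), up to an explicit scalar involving $M$ and the central elements.

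We then compare matrix coefficients in the $\widetilde H$ basis via the Macdonald Pieri rules. Write $e_1\widetilde H_\mu=\sum_{\lambda=\mu+\square}d_{\lambda\mu}\widetilde H_\lambda$. Conjugating by $\nabla$ multiplies each coefficient by $T_\lambda/T_\mu$. Since $\lambda$ differs from $\mu$ by one cell, this ratio is exactly the weight of the added cell, which is $B_\lambda-B_\mu$ (after the $q\leftrightarrow t$ convention of $B$ versus $T$ is matched). The commutator $[(\omega p_1)[B-1/M],\,-e_1/M]$ has coefficients $d_{\lambda\mu}\cdot(B_\lambda-B_\mu)\cdot(-1/M)$. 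Thus the two sides agree once the scalar in the commutator relation is inserted.

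The $e_1^\perp$ case is analogous. The removed cell contributes $T_\lambda/T_\mu=(\text{weight})^{-1}$, which is why the commutator with $\bar B$ appears, and the factor $c_2\mapsto(qt)^{-1}$ enters the normalization.

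\textbf{Main obstacle.} The conceptual argument is short; the real work is bookkeeping. One must pin down the exact scalar in the relation $[p_1(X^{1,0}),p_1(X^{0,1})]\propto p_1(X^{1,1})$ in the conventions of \cite{BHMPS21Delta}, including the central factors and the $q\leftrightarrow t$ swap between $B_\mu$ and $T_\mu$. One must also confirm that $\tau$ is compatible with the chosen central specialization. If the generator check produced a stray constant, I would absorb it by verifying that $\tau$ must also rescale the central elements accordingly. I would confirm the result against the known identity $\nabla e_1^\bullet\nabla^{-1}=-D_1$-type formulas as a sanity check.
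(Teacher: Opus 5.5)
The paper does not prove this lemma; it is quoted from \cite{BHMPS23}. So your argument has to stand on its own. Its architecture is sound, and your Pieri computations are in fact exact. Write $w=T_\lambda/T_\mu$ for the weight of the cell $\lambda/\mu$. Then
$\nabla\rho(p_1(X^{0,1}))\nabla^{-1}=\bigl[\rho(p_1(X^{1,0})),\rho(p_1(X^{0,1}))\bigr]$ and $\nabla e_1^\perp\nabla^{-1}=\bigl[\rho(p_1(X^{-1,0})),e_1^\perp\bigr]$ hold on the nose. In the second identity the constant $\overline{1/M}=qt/M$ cancels, and $\bar B_\lambda-\bar B_\mu=w^{-1}=T_\mu/T_\lambda$.

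Because there is no slack, this shows the lemma on the generators $p_1(X^{0,\pm1})$ is \emph{equivalent} to a statement about the defining relations. Namely, in $[p_1(X^{1,0}),p_1(X^{0,1})]=\gamma\,p_1(X^{1,1})$ and $[p_1(X^{-1,0}),p_1(X^{0,-1})]=\gamma'\,p_1(X^{-1,-1})$, the coefficients $\gamma,\gamma'$ must be sent by the action to exactly $1$: no sign, no power of $M$, no power of $c_2$. You leave this as ``an explicit scalar involving $M$ and the central elements'' to be inserted later. If that scalar really involved $M$ or $c_2$, the statement would be false, so the one step that carries the content is not done. It has to be read off from the presentation in \cite{BHMPS21Delta}. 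In Burban--Schiffmann's form, the central factor attached to these two triangles is trivial, because in each case the three vectors lie in the same half-plane. What remains is the sign and scaling of the dictionary $u_{\mathbf a}\leftrightarrow p_1(X^{\mathbf a})$. Your sanity check against $\nabla e_1\nabla^{-1}=-D_1$ does not settle this by itself, since identifying $D_1$ with the action of $p_1(X^{1,1})$ is the same normalization question.

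There are also three corrections.
\begin{itemize}
\item Your treatment of the centre is backwards. $\tau$ fixes the direction $(1,0)$, hence fixes the central element attached to it. That element is $c_1$: it must act by $1$, since $\Lambda(X^{1,0})$ and $\Lambda(X^{-1,0})$ act by commuting diagonal operators. $\tau$ then multiplies $c_2$ by a power of $c_1$, and this is why the specialization is respected. As you wrote it, with $\tau$ multiplying $c_1$ by a power of $c_2$, compatibility would fail, since $c_2\mapsto(qt)^{-1}\neq 1$.
\item Your fallback of absorbing a stray constant by letting $\tau$ rescale central elements is not available. The automorphism $\tau$ and its action on the centre are fixed in advance. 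You should also cite that, in the normalization of \cite{BHMPS21Delta}, $\tau$ sends $f(X^{\mathbf a})\mapsto f(X^{\tau\mathbf a})$ with at most $c_1$-twists. A constant surviving the generator check would therefore refute the identity, not be absorbed into it.
\item For the same reason, your expectation that $c_2\mapsto(qt)^{-1}$ ``enters the normalization'' in the $e_1^\perp$ case is contradicted by your own computation, which produces no factor of $qt$.
\end{itemize}
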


Combining Proposition~\ref{Prop: E action on Lambda} and Lemma~\ref{Lem: nabla conjugation} yields the following identity: for a symmetric function $f$,
\begin{equation}\label{eq: m,1 = nabla^m}
    f[-MX^{m,1}]\cdot 1 = \nabla^m f.
\end{equation}
Consequently, the algebraic sides of the shuffle theorem (concerning $\nabla^m e_n$) and the Loehr--Warrington conjecture (concerning $\nabla^m s_\lambda$) can be seen as special cases of certain elliptic Hall algebra action on $1$.

Furthermore, symmetric functions appearing in the rational compositional shuffle theorem \cite{Mel21}, rational version of Loehr--Warrington formula \cite{BHMPS25LW}, and the conjecture of Wilson, Gorsky--Mazin--Vazirani (Theorem~\ref{thm: main}) can also be expressed via the elliptic Hall algebra action on $1$, as $C_{\alpha}[-MX^{m,n}]$, $s_\lambda[-MX^{m,n}]$, and $e_{(1^k)}[-MX^{m,n}]$, respectively. This relies on the identification in \cite[Proposition 6.7]{Neg14}. 

\subsection{Labeled lattice paths}
In this subsection, we define a common generalization that unifies the tuples of cyclic parking functions introduced in \cite{Wil23} (appearing in Theorem~\ref{thm: main}) with the $(km, kn)$-parking functions arising in the rational (compositional) shuffle theorem \cite{Mel21}.

 Let $m$ and $n$ be coprime positive integers.
An \emph{$(m,n)$-lattice path} is a path of unit north $(0,1)$ and east $(1,0)$ steps, starting at $(-a, 0)$ for some integer $a \ge 0$.
The path must begin with a north step, contain exactly $n$ north steps, and stay weakly above the line $my=nx$.

For a positive integer $k \ge 1$, we view $(\mathbb{Z}^2)^k$ as a collection of $k$ copies (or \emph{sheets}) of the lattice $\mathbb{Z}^2$, indexed by $i \in \{1, \dots, k\}$.
A \emph{cell} is a unit square in one of these sheets.
The \emph{content} of a cell $c$ in the $i$-th sheet is defined as
\[
\operatorname{cont}(c) = k(my - nx) + i-1,
\]
where $(x,y)$ denotes the coordinates of the south-east corner of $c$.

A \emph{labeled $(m,n)$-lattice paths tuple of order $k$} is a sequence $\pi^\bullet=(\pi^{(1)}, \dots, \pi^{(k)})$, where each component $\pi^{(\ell)}=(P^{(\ell)}, f^{(\ell)})$ consists of an $(m,n)$-lattice path $P^{(\ell)}$ and a labeling function $f^{(\ell)}$ that assigns a positive integer to each north step of $P^{(\ell)}$. The tuple is equipped with a \emph{connecting permutation} $\phi \in \mathfrak{S}_k$ (not necessarily unique) and must satisfy the following conditions:

\begin{enumerate}
    \item (Boundary Condition) The end points match cyclically according to $\phi$, such that
    \[
        \operatorname{end}(\pi^{(\ell)}) = \operatorname{start}(\pi^{(\phi(\ell))}) + (m,n) \quad \text{for all } 1 \le \ell \le k,
    \]
    where $\operatorname{start}(\pi)$ and $\operatorname{end}(\pi)$ denote the starting point and end point of the path $\pi$, respectively, and addition is coordinate-wise.
    
    \item (Labeling Condition) Let $N_i^{(\ell)}$ denote the $i$-th north step of the component $\pi^{(\ell)}$. If $N_i^{(\ell)}$ and $N_{i+1}^{(\ell)}$ are consecutive steps in the path (i.e., they are geometrically adjacent vertical steps), then their labels must strictly increase:
    \[
        f^{(\ell)}(N_i^{(\ell)}) < f^{(\ell)}(N_{i+1}^{(\ell)}).
    \]
    This condition extends to the boundary: we define the virtual step $N_{n+1}^{(\ell)}$ as the $(m,n)$-translation of $N_1^{(\phi(\ell))}$, carrying the label $f^{(\phi(\ell))}(N_1^{(\phi(\ell))})$.
\end{enumerate}

Each component $\pi^{(\ell)}$ is typically embedded in the $\ell$-th sheet within the strip $0 \le y \le n$. The tuple $\pi^\bullet$ is commonly visualized in one of two ways:
\begin{itemize}
    \item By drawing all components $\pi^{(\ell)}$ together in a single fundamental strip $0 \le y \le n$ (while keeping track of the component indices), or
    \item By vertically stacking the components such that $\pi^{(\ell)}$ lies in the strip $(\ell-1)n \le y \le \ell n$.
\end{itemize}

A set of labeled $(m,n)$-lattice paths tuples of order $k$ is denoted by $\PF^k_{m,n}$. For a tuple $\pi^\bullet \in \PF^k_{m,n}$, we denote the underlying path tuple by $P_{\pi^\bullet}=(P^{(1)}, \dots, P^{(k)})$ and the collective labeling by $f_{\pi^\bullet}$. Specifically, for a north step $N$ belonging to the component $P^{(\ell)}$, we write $f_{\pi^\bullet}(N)$ to denote $f^{(\ell)}(N)$.

We now define statistics $\area$, $\pdinv$ and $\ldinv$, along with the skeleton of labeled $(m,n)$-lattice paths. 
For a single $(m,n)$-lattice path $P$, the \emph{area}, denoted $\operatorname{area}(P)$, is the number of cells with non-negative content lying below $P$.
Equivalently, this counts the cells between $P$ and the line $my=nx$. 
By counting these cells row by row, from the $1$st to the $n$-th row, we obtain the \emph{area sequence} $\operatorname{aseq}(P) = (a_1, \dots, a_n)$.
For a $k$-tuple $P^\bullet=(P^{(1)}, \dots, P^{(k)})$, the total area is $\operatorname{area}(P^\bullet) = \sum_{\ell=1}^{k} \operatorname{area}(P^{(\ell)})$.

Let $\pi$ be a labeled $(m,n)$-lattice path, and denote its $i$-th north step by $N_i$.
Consider a collection of indeterminates $\{z_{i,j,k}\}$ indexed by $1\le i \le n$, $j \in \mathbb{Z}_{\ge 0}$, and $k \in \mathbb{Z}_{\ge 1}$.
We define the \emph{skeleton} of $\pi$, denoted $z(\pi)$, as the monomial
\[
    z(\pi) = \prod_{i=1}^n z_{i, \operatorname{aseq}(P_\pi)_i, f_\pi(N_i)}.
\]
For $\pi^\bullet=(\pi^{(1)}, \dots, \pi^{(k)}) \in \PF^k_{m,n}$, we define its skeleton as the product of the component skeletons:
\[
    z(\pi^\bullet) = \prod_{\ell=1}^k z(\pi^{(\ell)}).
\]

The \emph{content} of a north step is defined as the content of the cell immediately to its left, and the content of an east step is that of the cell immediately below it.
The \emph{path diagonal inversion}, denoted $\operatorname{pdinv}(P^{\bullet})$, is the number of pairs $(N, E)$ consisting of a north step $N$ and an east step $E$ of $P^{\bullet}$ such that $\operatorname{cont}(N) < \operatorname{cont}(E)$.

For a tuple $\pi^\bullet \in \PF^k_{m,n}$, a pair $(N, N')$ of north steps in $\pi^\bullet$ forms a \emph{labeled diagonal inversion} if:
\begin{enumerate}
    \item $\operatorname{cont}(N) < \operatorname{cont}(N') < \operatorname{cont}(N) + km$, and
    \item $f_{\pi^\bullet}(N) \ge f_{\pi^\bullet}(N')$.
\end{enumerate}
The statistic $\operatorname{ldinv}(\pi^\bullet)$ is the total number of such inversions.

\begin{rmk}
We compare our definitions of $\operatorname{pdinv}$ and $\operatorname{ldinv}$ with those in the existing literature.
Our path diagonal inversion corresponds to the definition of the dimension of the invariant set in \cite{GMV17} (or the equivalent statistic on lattice paths).
This definition differs from Wilson's path diagonal inversion in \cite{Wil23}, which we denote by $\operatorname{pdinv}'$, by the relation
\[
    \operatorname{pdinv}'(P^{\bullet}) = \operatorname{pdinv}(P^{\bullet}) + \operatorname{maxldinv}(P^{\bullet}),
\]
where $\operatorname{maxldinv}$ denotes the maximum value of the $\operatorname{ldinv}$ statistic over all valid labels of $P^{\bullet}$.

Furthermore, our labeled diagonal inversion $\operatorname{ldinv}$ is distinct from statistics appearing in previous references, where several variants of this concept exist.
Wilson \cite{Wil23} also used the term \emph{labeled diagonal inversion}, which we denote by $\operatorname{ldinv}'$ to avoid confusion.
Bergeron, Garsia, Leven, and Xin \cite{BGLX16} defined \emph{temporary dinv}, denoted by $\operatorname{tdinv}$, while Mellit \cite{Mel21} used the same terminology with a slight modification, denoted here by $\operatorname{tdinv}'$.
These statistics are related by the following identities:
\[
    \operatorname{ldinv} = \operatorname{maxldinv} - \operatorname{ldinv}' = \operatorname{maxldinv} - \operatorname{tdinv} = \operatorname{tdinv}'.
\]
\end{rmk}

\begin{example} 
Let $m=4, n=3$, and $k=2$. Figure~\ref{fig:path_tuple} illustrates an example of $(m,n)$-lattice paths. If we label their north steps by $3,2,4$ and $1,2,1$, respectively, we obtain a tuple of labeled $(m,n)$-lattice paths of order $k$ with a connecting permutation $21$. We denote this by $\pi^\bullet=(\pi, \pi')$. 

In the figure, the content value of each cell is displayed, with nonnegative integers shown in bold. Furthermore, a value is colored green if it is a content of an east step or a north step of $\pi^\bullet$.

First, the skeletons are given by
\[
    z_\pi=z_{1,3,3}z_{2,2,2}z_{3,3,4}, \quad z_{\pi'}=z_{1,2,1}z_{2,3,2}z_{3,1,1}, \quad \text{and} \quad z_{\pi^\bullet}=z_\pi z_{\pi'}.
\]
Restricting our attention to the area sequences, we have
\[
    \aseq(P_\pi)=(3,2,3) \quad \text{and} \quad \aseq(P_{\pi'})=(2,3,1).
\]
In particular, the total area is $\area(P_{\pi^\bullet})=\area(P_\pi)+\area(P_{\pi'})=8+6=14$.
Specializing $z_{i,j,k}\mapsto t^{j}x_k$ in $z_{\pi^\bullet}$ yields the monomial $x^{f_{\pi^\bullet}}$ corresponding to $\pi^\bullet$ together with the area information:
\[
    x^{f_{\pi^\bullet}}:=\prod_{\text{$N$: north step of $P_{\pi^{\bullet}}$}}f_{\pi^\bullet}(N)  ,\quad \text{and }\quad t^{\area(P_{\pi^\bullet)}}x^{f_{\pi^\bullet}}=t^{14}x_1^2x_2^2x_3x_4.
\]

To compute $\pdinv$ and $\ldinv$, we first denote the north and east steps of $\pi$ by $N_1,N_2,N_3$ and $E_1,\dots,E_5$, and those of $\pi'$ by $N_1',N_2',N_3'$ and $E_1',E_2',E_3'$. Then,
\begin{align*}
    \pdinv(\pi^\bullet)&=\#\{(N_2,E_3),(N_2,E_1'),(N_1',E_3),(N_1',E_1'),(N_3',E_1),(N_3',E_3),(N_3',E_1')\}=7, \\
    \ldinv(\pi^\bullet)&=\#\{(N_1,N_2'), (N_2,N_2'),(N_3',N_1')\}=3.
\end{align*}
    
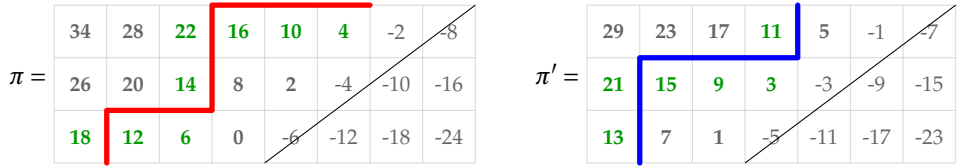
\begin{figure}[ht]
\centering
    $\pi=$
\begin{tikzpicture}[scale=0.7, baseline=(current bounding box.center)]
    % Grid definitions
    \def\xmin{-4} \def\xmax{3}
    \def\ymin{0} \def\ymax{2}

    % LOOP: Fill Numbers
    \foreach \x in {\xmin,...,\xmax} {
        \foreach \y in {\ymin,...,\ymax} {
            
            % 1. Calculate Value
            \pgfmathtruncatemacro{\val}{8*(\y) - 6*(\x+1)}
            
            % 2. Check if Bold (val >= 0)
            \ifnum\val<0
                \def\txtstyle{}
            \else
                \def\txtstyle{\bfseries}
            \fi
            
            % 3. Check if Green
            \def\txtcolor{black!60}
            
            % Logic for Pi (Red Path):
            % Below East Steps
            \ifnum\y=0 \ifnum\x=-3 \def\txtcolor{green!60!black} \fi \fi
            \ifnum\y=0 \ifnum\x=-2 \def\txtcolor{green!60!black} \fi \fi
            \ifnum\y=2 \ifnum\x=-1 \def\txtcolor{green!60!black} \fi \fi
            \ifnum\y=2 \ifnum\x=0  \def\txtcolor{green!60!black} \fi \fi
            \ifnum\y=2 \ifnum\x=1  \def\txtcolor{green!60!black} \fi \fi
            
            % Left of North Steps
            % ADDED THIS LINE FOR CELL 18 (Left of start step)
            \ifnum\x=-4 \ifnum\y=0 \def\txtcolor{green!60!black} \fi \fi
            
            \ifnum\x=-2 \ifnum\y=1 \def\txtcolor{green!60!black} \fi \fi
            \ifnum\x=-2 \ifnum\y=2 \def\txtcolor{green!60!black} \fi \fi

            % Draw Node
            \node[\txtcolor, font=\txtstyle, scale=0.8] at (\x+0.5, \y+0.5) {\val};
        }
    }

    % Grid and Lines
    \draw[help lines, gray!30, thin] (-4,0) grid (4,3);
    \draw[black, thin] (0,0) -- (4,3);

    % Path
    \draw[red, line width=2pt, line cap=round, line join=round] 
        (-3,0) -- (-3,1) -- (-2,1) -- (-1,1) -- (-1,2) -- (-1,3) -- (0,3) -- (1,3) -- (2,3);
\end{tikzpicture}
\qquad
    $\pi'=$
    \begin{tikzpicture}[scale=0.7, baseline=(current bounding box.center)]
        % Grid definitions
        \def\xmin{-3} \def\xmax{3}
        \def\ymin{0} \def\ymax{2}

        % LOOP: Fill Numbers
        \foreach \x in {\xmin,...,\xmax} {
            \foreach \y in {\ymin,...,\ymax} {
                
                % 1. Calculate Value (Base is 1)
                \pgfmathtruncatemacro{\val}{1 + 8*(\y) - 6*(\x+1)}
                
                % 2. Check if Bold
                \ifnum\val<0
                    \def\txtstyle{}
                \else
                    \def\txtstyle{\bfseries}
                \fi
                
                % 3. Check if Green
                \def\txtcolor{black!60}
                
                % Logic for Pi' (Blue Path):
                % Below East Steps:
                \ifnum\y=1 \ifnum\x=-2 \def\txtcolor{green!60!black} \fi \fi
                \ifnum\y=1 \ifnum\x=-1 \def\txtcolor{green!60!black} \fi \fi
                \ifnum\y=1 \ifnum\x=0  \def\txtcolor{green!60!black} \fi \fi

                % Left of North Steps:
                \ifnum\x=-3 \ifnum\y=0 \def\txtcolor{green!60!black} \fi \fi
                \ifnum\x=-3 \ifnum\y=1 \def\txtcolor{green!60!black} \fi \fi
                \ifnum\x=0  \ifnum\y=2 \def\txtcolor{green!60!black} \fi \fi

                % Draw Node
                \node[\txtcolor, font=\txtstyle, scale=0.8] at (\x+0.5, \y+0.5) {\val};
            }
        }

        % Grid and Lines
        \draw[help lines, gray!30, thin] (-3,0) grid (4,3);
        \draw[black, thin] (0,0) -- (4,3);

        % Path
        \draw[blue, line width=2pt, line cap=round, line join=round] 
            (-2,0) -- (-2,1) -- (-2,2) -- (-1,2) -- (0,2) -- (1,2) -- (1,3);
    \end{tikzpicture}
\caption{An example of an element of $\PF^{2}_{4,3}$ (labels are specified in the context).}
    \label{fig:path_tuple}
\end{figure}
\end{example}

\subsection{Cyclic parking functions}
 A \emph{cyclic $(m,n)$-parking function} is a labeled $(m,n)$-lattice path $\pi$ that starts with a north step $N$, contains exactly $m$ east steps, and satisfies the condition that if it ends with a north step $N'$, then $f_\pi(N) > f_\pi(N')$.
Equivalently, a cyclic $(m,n)$-parking function is an element of $\PF^1_{m,n}$.

Let $\cPF_{m,n}^k$ denote the set of $k$-tuples of cyclic $(m,n)$-parking functions.
This set can be viewed as the subset of $\PF^k_{m,n}$ where the connecting permutation $\phi$ can be chosen to be the identity map. In Theorem~\ref{thm: main}, the combinatorial formula on the right-hand side is given by
\[
\sum_{\pi^\bullet\in\cPF^k_{m,n}} q^{\stat(\pi^\bullet)}t^{\area(P_{\pi^\bullet})} x^{f_{\pi^\bullet}},
\]
where we conveniently denote
\begin{equation*}
    \stat(\pi^{\bullet}) := C(m,n,k) - \pdinv(P_{\pi^\bullet}) - \ldinv(\pi^\bullet),
\end{equation*}
with the constant term $C(m,n,k) = \frac{(mk-1)(nk-1)+k-1}{2}$.

We define a partial order $\prec_{m,n}$ on $\cPF_{m,n}$.
For $\pi, \pi' \in \cPF_{m,n}$, we denote $\pi \prec_{m,n} \pi'$ if $P_{\pi'}$ lies weakly to the left of $P_{\pi}$, the paths share no east steps, and at every vertex $v$ where they intersect, the north step $N \in P_{\pi}$ ending at $v$ and the north step $N' \in P_{\pi'}$ starting at $v$ satisfy $f_{\pi}(N) \ge f_{\pi'}(N')$. If $v$ is on the line $y=n$, then we take $N'$ to be the $(m,n)$-translation of the first step of $\pi'$. When $m$ and $n$ are clear from the context, we simply write $\pi \prec \pi'$.
In the subsequent section, we reformulate the rational compositional shuffle theorem in terms of $P$-tableaux on this partial order (\Cref{prop: reformulation of rational shuffle}).

\section{Reformulation of rational Compositional shuffle theorem}\label{Sec: rcs}
\subsection{Crossings and $q$-statistics}\label{subsec: Crossings and q-stat}

\begin{definition}
Let $(N, a)$ and $(N', a')$ be two labeled north steps.
Let $(x, y)$ be the end point of $N$ and $(x', y)$ be the starting point of $N'$.
We say that the labeled north step $(N, a)$ can be \emph{extended} with $(N', a')$ if $x < x'$, or if $x = x'$ and $a < a'$.

Let $\pi$ and $\pi'$ be labeled $(m,n)$-lattice paths. For a technical reason, we consider extended labeled paths $\tilde{\pi}$ and $\tilde{\pi}'$, obtained by appending labeled north steps to the ends of $\pi$ and $\pi'$, respectively. For $1 \le y \le n+1$, let $N_y$ (respectively $N'_y$) denote the $y$-th north step of $\tilde{\pi}$ (respectively $\tilde{\pi}'$). We also denote by $f$ and $f'$ the labeling functions of $\tilde{\pi}$ and $\tilde{\pi}'$, respectively.

We say that $\tilde{\pi}$ and $\tilde{\pi}'$ have a \emph{crossing} at $(x,y)$ for some $1 \le y \le n$ if:
\begin{enumerate}
    \item $(N_y, f(N_y))$ can be extended with $(N'_{y+1}, f'(N'_{y+1}))$,
    \item $(N'_y, f'(N'_y))$ can be extended with $(N_{y+1}, f(N_{y+1}))$, and
    \item the crossing occurs at $x$-coordinate $x = \min\{x_{N_{y+1}}, x_{N'_{y+1}}\}$, where $x_S$ denotes the $x$-coordinate of a north step $S$.
\end{enumerate}

Given sheet information indicating that $\pi$ lies in an earlier sheet than $\pi'$, we say that a crossing is \emph{positive} if $N_y$ lies to the left of $N'_y$; if they share the same geometric position, we further require that $f(N_y) < f'(N'_y)$. We denote by $\operatorname{pos}(\tilde{\pi}, \tilde{\pi}')$ the total number of positive crossings between $(\tilde{\pi}, \tilde{\pi}')$.

Finally, for $\pi^\bullet=(\pi^{(1)}, \dots, \pi^{(k)}) \in \PF^k_{m,n}$ with a connecting permutation $\phi$, we define the statistic $\operatorname{pos}(\pi^\bullet)$ by
\[
    \operatorname{pos}(\pi^\bullet) = \sum_{1 \le \ell < \ell' \le k} \operatorname{pos}(\tilde{\pi}^{(\ell)}, \tilde{\pi}^{(\ell')}).
\]
Here, the extended labeled path $\tilde{\pi}^{(\ell)}$ is obtained from $\pi^{(\ell)}$ by appending a north step with the label on the first north step of $\pi^{(\phi(\ell))}$. We leave it as an exercise to the reader to check that $\pos(\pi^{\bullet})$ does not depend on the choice of the connecting permutation $\phi$.
\end{definition}

\begin{lem}\label{lem:stat+pos depend only on skeleton}
    Let $\pi^\bullet\in\PF^k_{m,n}$ be a labeled $(m,n)$-lattice paths tuple of order $k$. The sum
    \[
        \pdinv(\pi^\bullet) + \ldinv(\pi^\bullet) + \pos(\pi^\bullet)
    \]
    depends only on the skeleton $z(\pi^\bullet)$.
\end{lem}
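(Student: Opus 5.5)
We prove the lemma by a local exchange argument. Two tuples with the same skeleton have, row by row in each component, the same area entry and the same label. So the multiset of labeled north steps (row index, content, label) is fixed, and only the connecting data can differ. That data is how the rows are glued horizontally, i.e. the lengths of the east runs between consecutive north steps.

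The quantity $\pdinv+\ldinv+\pos$ splits into contributions from pairs of steps. We reorganize it as a sum over ordered pairs of labeled north steps $(N,N')$ in rows $y$ and $y'$, and aim to show that each pair contributes an amount determined by $(\cont(N),f(N),\cont(N'),f(N'))$ and the sheet indices alone. Since contents are $k(my-nx)+i-1$ and the area entry fixes $x$ given the row, the skeleton determines all contents of north steps.

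First, we rewrite $\pdinv$. The east steps in row $y$ of a path lie between $N_y$ and $N_{y+1}$ (using the extended path for $y=n$). So the pairs $(N,E)$ with $\cont(N)<\cont(E)$ can be counted, for each east run, as the number of contents in an arithmetic progression (step $kn$) strictly between $\cont(N_y)$-type endpoints that exceed $\cont(N)$. This expresses $\pdinv$ as a sum, over pairs (north step $N$, consecutive pair $(N'_y,N'_{y+1})$), of a count depending only on $\cont(N)$, $\cont(N'_y)$ and $\cont(N'_{y+1})$.

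The difficulty is that $N'_{y+1}$ belongs to the same component only up to the gluing. This is exactly where $\pos$ enters. A crossing between $\tilde\pi$ and $\tilde\pi'$ at row $y$ means we could swap the tails of the two components above row $y$ without violating the extension rule. Such swaps preserve the skeleton, and every two tuples with the same skeleton are connected by a sequence of them, which also changes $\phi$.

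The plan is then to verify invariance under a single tail swap at a crossing. Concretely, we compare the configurations (tails unswapped) and (tails swapped) at rows $y,y+1$ of two components, and check three changes:
\begin{itemize}
\item[(i)] $\pdinv$ changes by the difference in the number of east steps of one run lying above $\cont(N)$ versus the other, which is governed by $x_{N_{y+1}}$ versus $x_{N'_{y+1}}$;
\item[(ii)] $\ldinv$ changes only through pairs involving $N_{y+1},N'_{y+1}$, whose relative content window shifts; the ties are resolved by the label comparison built into ``extended'';
\item[(iii)] $\pos$ changes by exactly $\pm1$ at this crossing, with the sign given by whether $N_y$ lies left of $N'_y$ (labels breaking ties). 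Crossings at other rows are unaffected.
\end{itemize}
We expect these to cancel. Counting only the north steps and east runs adjacent to the crossing reduces this to a finite case check, according to the relative order of $x_{N_y},x_{N'_y},x_{N_{y+1}},x_{N'_{y+1}}$ and the sheet order.

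The main obstacle will be step (iii) together with the connectivity claim. We need to show that any two tuples with equal skeleton are related by crossing swaps, handling the boundary row $y=n$ where extended steps and $\phi$ interact. For this we would sort components row by row, greedily matching $N_y$ to $N_{y+1}$ as in a stable matching. We would also need to confirm that $\pos$ is insensitive to the choice of $\phi$, which the paper leaves as an exercise and which we would prove by the same swap analysis.
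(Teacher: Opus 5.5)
Your plan has a genuine gap, and it begins at the first step. The skeleton $z(\pi^\bullet)$ is a commutative product, so it fixes only the multiset of triples (row, area entry, label) over all components. That is, it fixes the geometric position and label of every north step, but not the sheet the step lies in. Since $\cont$ contains the term $i-1$, contents are \emph{not} determined by the skeleton, and this sheet-dependence is exactly what the lemma has to control.

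Your moves also do not connect all tuples with a given skeleton. A tail swap at row $y$ exchanges only rows $y+1,\dots,n$ (and values of $\phi$), so the sheet containing each row-$1$ step never changes. For example, $(\pi,\pi')$ and $(\pi',\pi)$ with no crossing cannot be reached from each other. Invariance under such reorderings is not automatic. Take components $N\to M$ and $N'\to M'$ with $N$ strictly left of $N'$, $M$ starting at the end of $N'$, and $f(M)\le f(N')$. They do not cross at row $y$, yet reversing their sheet order moves one unit between an adjacent pair in $\ldinv$ and a corner in $\pdinv$. So the pair-by-pair invariance you aim for cannot hold either.

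Finally, the bookkeeping for a single swap is wrong.
\begin{itemize}
\item Whether the crossing at row $y$ is positive is decided by $N_y,N'_y$ and their sheets, which the swap does not touch. That crossing therefore survives with the same sign, so (iii) fails.
\item Conversely, every step above row $y$ changes sheet. Crossings between the two components at higher rows change sign, and corners, overlapped pairs and adjacent pairs at higher rows can change, including those involving components in intermediate sheets. So (ii) and ``crossings at other rows are unaffected'' fail too.
\end{itemize}
A case check confined to rows $y,y+1$ cannot close.

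What is missing is the paper's height-by-height localization, which needs no connectivity argument at all. A comparison $\cont(N)<\cont(E)$, or the window condition for a pair $(N,N')$ in $\ldinv$, depends on sheets only when the two cells coincide geometrically or (for two north steps) are vertically adjacent. These are exactly the corners, overlapped pairs and adjacent pairs. All other pairs are decided by geometry.

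The skeleton fixes the geometry of the east steps as well. The number of components with an east step from $(x,y)$ to $(x+1,y)$ is $\#\{\text{row-}y\text{ north steps with } x_N\le x\}-\#\{\text{row-}(y+1)\text{ north steps with } x_M\le x\}$, where row $n+1$ means row $1$ translated by $(m,n)$.

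The paper attributes the sheet-sensitive pairs, together with the positive crossings, to heights. It then shows that at each height $y$ the total of these four items is the same for \emph{every} valid assignment of the row-$y$ and row-$(y+1)$ steps to sheets. This covers re-matchings and reorderings at once, and is proved by a case analysis on two sheets.

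If you carry this out, passing from two sheets to $k$ needs one extra observation, because re-matching two sheets changes their pairings with the others. The two-sheet total turns out to be $1$ if the closed intervals $[x_N,x_M]$ and $[x_{N'},x_{M'}]$ meet, minus $1$ if $N$ and $N'$ are identical labeled steps. The number of meeting pairs among $k$ such intervals depends only on the multisets of endpoints, so the $k$-sheet total is invariant.
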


\begin{proof}
We analyze cases where statistics $\pdinv$ and $\ldinv$ vary according to the sheet assignment.

    \noindent $\bullet$ Path Diagonal Inversions ($\pdinv$):
     For two cells $c$ and $c'$ (which may belong to different sheets), the condition $\cont(c) < \cont(c')$ is invariant under changing the sheet ordering, except in the case where $c$ and $c'$ share the same geometric position and their relative sheet ordering is swapped.
    Thus, for a pair $(N, E)$ consisting of a north step and an east step, the contribution to the path diagonal inversion count is independent of the specific sheet ordering unless the end point of $N$ coincides with the end point of $E$. We refer to such a configuration as a \emph{corner} at height $y$ if $N$ is the $y$-th north step. Figure~\ref{fig:configurations} illustrates the local configuration of a corner.

    \noindent $\bullet$ Labeled Diagonal Inversions ($\ldinv$):
    A similar argument applies to $\ldinv$. For a pair of labeled north steps $(N, N')$, the contribution to the statistic is invariant under changing the sheet ordering, except in two specific configurations:
    \begin{enumerate}
        \item \emph{Overlapped pair}: $N$ and $N'$ correspond to the same geometric step with different labels.
        \item \emph{Adjacent pair}: $N$ and $N'$ are vertically adjacent (i.e., the end point of $N$ coincides with the starting point of $N'$). Moreover, the label of $N$ is larger or equal to the label of $N'$.
    \end{enumerate}
    See Figure~\ref{fig:configurations} for the local configurations of an overlapped pair and an adjacent pair.

    It is enough to show the following local claim:

    \textbf{Claim.} Let $(N^{(1)},a_1),\dots,(N^{(k)},a_k)$ be north steps with labels at height $y$ and $(M^{(1)},b_1),\dots,(M^{(k)},b_k)$ be north steps with labels at height $y+1$. We also require that there exists a permutation $\phi\in \mathfrak{S}_{k}$ such that $(N^{(i)},a_i)$ is extended with $(M^{(\phi(i))},b_{\phi(i)})$. Now pick two permutations $\psi$ and $\psi'$ such that $(N^{(\psi(i))},a_{\psi(i)})$ is extended with $(M^{(\psi'(i))},b_{\psi'(i)})$ and we make a (partial) labeled path with these north steps and assign it to the $i$-th sheet. Then the total contribution of the following four items is invariant under the choice of $\psi$ and $\psi'$:
    \begin{enumerate}
        \item The number of corners at height $y$ such that the north step is in an earlier sheet;
        \item The number of overlapped pairs at height $y$ such that a step with a bigger label is in an earlier sheet;
        \item The number of adjacent pairs involving heights $y$ and $y+1$ contributing such that the upper north step is in an earlier sheet;
        \item The number of positive crossings at height $y$.
    \end{enumerate}

    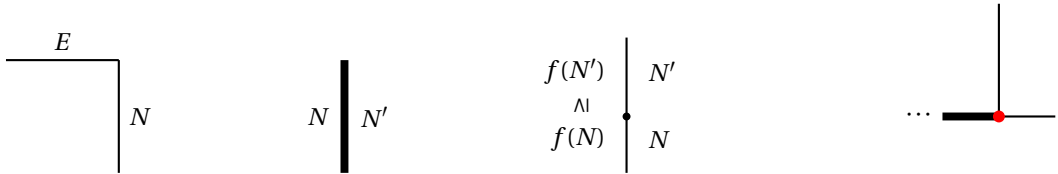
\begin{figure}[ht]
        \centering
        \begin{tikzpicture}[scale=1.5, thick, >=stealth]
        
            % --- Figure 1: East step and North step forming a corner ---
            \begin{scope}[shift={(0,0)}]
                \draw (0,1) -- (1,1) node[midway, above] {$E$};
                \draw (1,1) -- (1,0) node[midway, right] {$N$};
            \end{scope}
        
            % --- Figure 2: Overlapped North steps ---
            \begin{scope}[shift={(3,0)}]
                \draw[line width=3pt] (0,0) -- (0,1);
                \node[left] at (-0.05, 0.5) {$N$};
                \node[right] at (0.05, 0.5) {$N'$};
            \end{scope}
        
            % --- Figure 3: Stacked labels with inequality ---
            \begin{scope}[shift={(5.5,0)}]
                \draw (0,0) -- (0,1.2);
                
                % Labels
                \node[anchor=east] at (-0.1, 0.3) {$f(N)$};
                \node[anchor=west] at (0.1, 0.3) {$N$};
                
                \node[anchor=east] at (-0.1, 0.9) {$f(N')$};
                \node[anchor=west] at (0.1, 0.9) {$N'$};
                
                % Center Dot
                \fill[black] (0,0.5) circle (1pt);
                
                % Inequality symbol
                \node at (-0.4, 0.6) {\rotatebox{90}{$\geq$}};
            \end{scope}
        
            % --- Figure 4: Path intersection ---
            \begin{scope}[shift={(7.8,0)}] % Adjusted shift slightly
                \node at (0.3, 0.5) {$\cdots$};
                
                % Incoming Horizontal line
                \draw[line width=3pt] (0.5,0.5) -- (1,0.5);
        
                % Outgoing lines
                \draw (1,0.5) -- (1,1.5);   % North
                \draw (1,0.5) -- (1.5,0.5); % East
                
                % Intersection dot
                \fill[red] (1,0.5) circle (1.5pt);
            \end{scope}
        
        \end{tikzpicture}
        \caption{The corner, overlapped pairs, adjacent pairs, and the crossing.}
        \label{fig:configurations}
    \end{figure}

It suffices to show the claim for $k=2$. For simplicity, let $(N,a)$ and $(N',a')$ denote labeled north steps at height $y$, and let $(M,b)$ and $(M',b')$ denote labeled north steps at height $y+1$. Without loss of generality, we assume that $N$ lies weakly to the left of $N'$ (if they are geometrically identical, a label of $N'$ is bigger) and that $M$ lies weakly to the left of $M'$ (following the same convention). Note that $(N,a)$ can be extended with $(M,b)$ and $(N',a')$ can be extended with $(M',b')$. We conduct a case-by-case analysis.

    \noindent $\bullet$ Case 1): the start point of $M$ is strictly to the left of the end point of $N'$. 
    In this scenario, there is no geometric interaction between the relevant steps that produces a contribution to items (1) through (4). A representative configuration is illustrated below:
    \begin{center}
        \begin{tikzpicture}[scale=.5]
            % Grid covering the bounding box of both paths
            \draw[help lines, gray!30, thin] (-4,0) grid (3,2);
            
            % --- Path 1 (Black): NEEEN starting at (0,0) ---
            \draw[blue, very thick, line cap=round, line join=round] 
                (0,0) -- (0,1) -- (1,1) -- (2,1) -- (3,1) -- (3,2);
            
            % --- Path 2 (Black): NEEN starting at (-4,0) ---
            \draw[red, very thick, line cap=round, line join=round] 
                (-4,0) -- (-4,1) -- (-3,1) -- (-2,1) -- (-2,2);
        \end{tikzpicture}
    \end{center}
        
    \noindent $\bullet$ Case 2): $(N',a')$ forms an adjacent pair with $(M,b)$.
    
    \begin{itemize}
        \item Case 2(a): $N$ lies strictly to the left of $N'$. 
        Depending on the sheet assignment, this configuration may contribute to items (1) or (3). A typical configuration is illustrated below:
        
        \begin{center}
            \begin{tikzpicture}[scale=.5]
                % Grid covering the bounding box
                \draw[help lines, gray!30, thin] (-3,0) grid (3,2);
                
                % --- Path 1 (Blue): NEEEN starting at (0,0) ---
                \draw[blue, very thick, line cap=round, line join=round]
                    (0,0) -- (0,1) node[midway, left] {$a'$} 
                          -- (1,1) -- (2,1) -- (3,1) -- (3,2);
                
                % --- Path 2 (Red): NEEEN starting at (-3,0) ---
                \draw[red, very thick, line cap=round, line join=round]
                    (-3,0) -- (-3,1) -- (-2,1) -- (-1,1) -- (0,1) 
                           -- (0,2) node[midway, left] {$b$};
            \end{tikzpicture}
        \end{center}
         Labels satisfy $b \le a'$.  If the red path is in an earlier sheet, there is exactly one contribution to item (3). Otherwise, if the red path is in a later sheet, there is exactly one contribution to item (1).
        
        \item Case 2(b): $N$ and $N'$ are identical. For labels we must have $a<b\le a'$. A typical configuration is illustrated below:
        \begin{center}
            \begin{tikzpicture}[scale=.5]
                % Define the shift amount for visual separation
                \def\sep{0.04}
        
                % Grid covering the bounding box
                \draw[help lines, gray!30, thin] (-1,0) grid (3,2);
        
                % Define style for thickened paths
                \tikzset{
                    thick path/.style={line width=2pt, line cap=round, line join=round}
                }
        
                % --- Path 1 (Blue): NEEEN starting at (0,0) ---
                \draw[blue,very thick]
                    (\sep,0) -- (\sep,1) node[midway, left, font=\small] {}
                              -- (1,1) -- (2,1) -- (3,1) -- (3,2);
        
                % --- Path 2 (Red): NN starting at (0,0) ---
                \draw[red,very thick]
                    (-\sep,0) -- (-\sep,1)
                              -- (-\sep,2) node[midway, right, font=\small] {$b$};
        
                % Optional: Mark the nominal starting point (0,0)
                \fill[gray] (0,0) circle (1.5pt);
            \end{tikzpicture}
        \end{center}
     If the red path is in an earlier sheet, there is exactly one contribution to item (3). Otherwise, if the red path is in a later sheet, there is exactly one contribution to item (2).
    \end{itemize}

    \noindent $\bullet$ Case 3): $(N',a')$ can be extended with $(M,b)$.

    \begin{itemize}
        \item Case 3(a):  $N$ lies strictly to the left of $N'$. A typical configuration is illustrated below:
        \begin{center}
        % Define common styles for consistency
        \tikzset{
            grid style/.style={help lines, gray!30, thin},
            path style/.style={very thick, line cap=round, line join=round}
        }
        
        % --- Figure 1 ---
        \begin{tikzpicture}[scale=.5]
            % Grid covering the area
            \draw[grid style] (-3,0) grid (3,2); 
            \def\sep{0.04}
        
            % Red path: Starts at (-2,0) using NEEEN
            \draw[red, path style] 
                (-2,0) -- (-2,1+\sep) -- (-1,1+\sep) -- (0,1+\sep) -- (1,1+\sep) -- (1,2);
        
            % Blue path: Starts at (-1,0) using NEEEN
            \draw[blue, path style] 
                (-1,0) -- (-1,1-\sep) -- (0,1-\sep) -- (1,1-\sep) -- (2,1-\sep) -- (2,2);
        \end{tikzpicture}
        \hspace{1cm} % Space between figures
        %
        % --- Figure 2 ---
        \begin{tikzpicture}[scale=.5]
            % Grid covering the area
            \draw[grid style] (-3,0) grid (3,2);
            \def\sep{0.04}
        
            % Red path: Starts at (-2,0) using NEEEEN
            \draw[red, path style] 
                (-2,0) -- (-2,1+\sep) -- (-1,1+\sep) -- (0,1+\sep) -- (1,1+\sep) -- (2,1+\sep) -- (2,2);
        
            % Blue path: Starts at (-1,0) using NEEN
            \draw[blue, path style] 
                (-1,0) -- (-1,1-\sep) -- (0,1-\sep) -- (1,1-\sep) -- (1,2);
        \end{tikzpicture}
        \end{center}
        If $(N,a)$ is in an earlier sheet, there is exactly one contribution to item (4). Otherwise, if $(N,a)$ is in a later sheet, there is exactly one contribution to item (1).
        
        \item Case 3(b): $N$ and $N'$ are identical. A typical configuration is illustrated below:
        \begin{center}
        % Define common styles for consistency
        \tikzset{
            grid style/.style={help lines, gray!30, thin},
            path style/.style={very thick, line cap=round, line join=round}
        }
        
        % --- Figure 1 ---
        \begin{tikzpicture}[scale=.5]
            % Grid covering the area
            \draw[grid style] (-3,0) grid (3,2); 
            \def\sep{0.04}
        
            % Red path: Starts at (-2,0) using NEEEN
            \draw[red, path style] 
                (-1-\sep,0) -- (-1-\sep,1+\sep) -- (-1,1+\sep) -- (0,1+\sep) -- (1,1+\sep) -- (1,2);
        
            % Blue path: Starts at (-1,0) using NEEEN
            \draw[blue, path style] 
                (-1+\sep,0) -- (-1+\sep,1-\sep) -- (0,1-\sep) -- (1,1-\sep) -- (2,1-\sep) -- (2,2);
        \end{tikzpicture}
        \hspace{1cm} % Space between figures
        %
        % --- Figure 2 ---
        \begin{tikzpicture}[scale=.5]
            % Grid covering the area
            \draw[grid style] (-3,0) grid (3,2);
            \def\sep{0.04}
        
            % Red path: Starts at (-2,0) using NEEEEN
            \draw[red, path style] 
                (-1-\sep,0) -- (-1-\sep,1+\sep) -- (-1,1+\sep) -- (0,1+\sep) -- (1,1+\sep) -- (2,1+\sep) -- (2,2);
        
            % Blue path: Starts at (-1,0) using NEEN
            \draw[blue, path style] 
                (-1+\sep,0) -- (-1+\sep,1-\sep) -- (0,1-\sep) -- (1,1-\sep) -- (1,2);
        \end{tikzpicture}
        \end{center}
        When $a=a'$, there is no contribution to any of items (1) to (4). Assume $a<a'$. If $(N,a)$ is in an earlier sheet, there is exactly one contribution to item (4). Otherwise, if $(N,a)$ is in a later sheet, there is exactly one contribution to item (2). 
    \end{itemize}
\end{proof}

\subsection{Original formulation of rational shuffle theorem}
A \emph{$(km, kn)$-Dyck path} is a lattice path from $(0,0)$ to $(km, kn)$ consisting of north and east unit steps that stays weakly above the line $my=nx$. A \emph{$(km, kn)$-parking function} is a pair $\pi=(P_\pi,f_\pi)$, where $P_\pi$ is a $(km, kn)$-Dyck path and $f_\pi$ is a labeling function of the north steps of $P_\pi$ with positive integers such that the labels are strictly increasing along consecutive north steps. We denote the set of $(km, kn)$-parking functions by $\PF_{km,kn}$. For $\pi\in \PF_{km,kn}$, let $(a_1m,a_1 n),\dots,(a_{r}m,a_{r}n)$ be a sequence of points that $P_{\pi}$ intersects the line $my=nx$, enumerated from the bottom. Note that we must have $a_1=0$ and $a_{r}=k$. We define $\touch(\pi)=\touch(P_{\pi})$ to be a composition of $k$ given by $(a_2-a_1,\dots,a_{r}-a_{r-1})$.

By decomposing a $(km,kn)$-parking function into horizontal strips defined by $(\ell-1)n < y \le \ell n$ for $\ell=1, \dots, k$, we identify a $(km, kn)$-parking function with a labeled $(m,n)$-lattice path of order $k$, denoted by $\pi^\bullet=(\pi^{(1)}, \dots, \pi^{(k)})$.
In this framework, each component $\pi^{(\ell)}$ is viewed as lying in the $\ell$-th sheet, where the point $((\ell-1)m, (\ell-1)n)$ in the global lattice is identified with the local origin $(0,0)$ of that sheet.
This $k$-tuple satisfies the following conditions:
\begin{enumerate}
    \item $\pi^{(1)}$ starts at the origin $(0,0)$.
    \item For $1 \le \ell \le k-1$, the path $\pi^{(\ell+1)}$ starts at the shifted end point of the previous path, i.e., $ \operatorname{end}(\pi^{(\ell)}) = \operatorname{start}(\pi^{(\ell+1)}) + (m,n)$. This ensures that the global path is continuous across strip boundaries.
    \item For $1 \le \ell \le k-1$, if $\pi^{(\ell)}$ ends with a north step $N$ and $\pi^{(\ell+1)}$ starts with a north step $N'$, then $f_{\pi^\bullet}(N) < f_{\pi^\bullet}(N')$.
\end{enumerate}
Note that these conditions guarantee that a $(km,kn)$-parking function corresponds to an element of $\PF^k_{m,n}$ whose connecting permutation $\phi$ is the cyclic shift $\ell \mapsto \ell+1 \pmod k$.

This perspective allows us to naturally define $\area$, $\ldinv$, and $\pdinv$, and consequently the statistic $\stat$, for $(km, kn)$-parking functions. The rational compositional shuffle theorem, conjectured in \cite{BGLX16} and proven by Mellit \cite{Mel21}, states:
\begin{thm}\label{thm: Rational shuffle} 
    For coprime positive integers $m,n$ and a composition $\alpha\models k$, we have 
    \[
        C_{\alpha}[-MX^{m,n}] \cdot 1 = \sum_{\substack{\pi^\bullet \in \mathrm{PF}_{km,kn}\\ \touch(\pi^{\bullet})=\alpha}} q^{\stat(\pi^\bullet)} t^{\mathrm{area}(P_{\pi^\bullet})} x^{f_{\pi^\bullet}}.
    \]
\end{thm}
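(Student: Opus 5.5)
This is the rational compositional shuffle theorem of Bergeron--Garsia--Leven--Xin \cite{BGLX16}, proven by Mellit \cite{Mel21}. I would not re-prove it. Instead I would deduce it from Mellit's statement by translating both sides into the conventions of this paper. The argument splits into an algebraic identification and a combinatorial identification of statistics.

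On the algebraic side, Mellit's theorem is stated for an operator built from the $\mathbf{C}_a$ operators conjugated into the $(m,n)$ direction, in the form used in \cite{BGLX16}. I would invoke Negu\c{t}'s identification \cite[Proposition 6.7]{Neg14}, together with Proposition~\ref{Prop: E action on Lambda}, to see that this operator applied to $1$ is exactly $C_\alpha[-MX^{m,n}]\cdot 1$. As a consistency check in the case $n=1$, Lemma~\ref{Lem: nabla conjugation} and \eqref{eq: m,1 = nabla^m} reduce the left side to $\nabla^m C_\alpha$. This matches the compositional shuffle theorem of Carlsson--Mellit and fixes the normalizations, such as signs and powers of $q$.

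On the combinatorial side, I would use the strip decomposition above to identify each $(km,kn)$-parking function with an element $\pi^\bullet$ of $\PF^k_{m,n}$ whose connecting permutation is the cyclic shift.

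1. The area and the monomial $x^{f}$ agree with Mellit's immediately, since the cells of nonnegative content below the path are precisely the cells between the path and the diagonal. The touch composition $\alpha$ is also preserved.
2. For the $q$-statistic, the remark above gives $\ldinv=\operatorname{tdinv}'$, Mellit's temporary dinv. Mellit's dinv is $\operatorname{tdinv}'$ plus a path-dependent term, namely a count of cells under the path satisfying an arm/leg inequality with respect to slope $n/m$, minus $\operatorname{maxtdinv}$.
3. It remains to show that $C(m,n,k)-\pdinv(P)$ equals Mellit's path-dependent term, using the content-based description of $\pdinv$ as in \cite{GMV17}.

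I would prove the identity in step 3 by a local bijection. The total number of pairs $(N,E)$ of a north step and an east step with $\cont(N)\ne\cont(E)$ splits as $\pdinv(P)$ plus the count of pairs with $\cont(N)>\cont(E)$. The latter should biject with Mellit's "dinv cells" in the complement of the area, up to the constant.

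Evaluating on the path that hugs the diagonal, where everything is explicit, pins down the constant as $C(m,n,k)=\frac{(mk-1)(nk-1)+k-1}{2}$. The term $(mk-1)(nk-1)/2$ is the number of lattice cells strictly above the diagonal in the $km\times kn$ rectangle in the coprime-adjusted sense. The correction $(k-1)/2$ comes from the contents being shifted by $i-1$ on the sheets.

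The main obstacle is step 3, matching $\pdinv$ with Mellit's path statistic. In the non-coprime case $(km,kn)$, ties in content along the diagonal must be broken consistently. The sheet offset $i-1$ in the content is designed to do exactly this, but verifying that the tie-breaking agrees with Mellit's convention requires a careful case check. If that check becomes unwieldy, I would fall back on comparing with \cite{GMV17} and \cite{Wil23}. Those works already relate the Mellit/BGLX statistics to $\pdinv'$ and $\ldinv'$, and the remark's relations $\pdinv'=\pdinv+\operatorname{maxldinv}$ and $\ldinv=\operatorname{maxldinv}-\ldinv'$ then transport the result to our statistics.
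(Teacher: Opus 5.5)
Your route is the paper's. Theorem~\ref{thm: Rational shuffle} is not reproved there but quoted from \cite{Mel21}: the left-hand side is identified through \cite[Proposition 6.7]{Neg14}, and the only nontrivial translation of statistics, $C(m,n,k)-\pdinv(P)=\dinv(P)$, is taken directly from \cite[Lemma 3.3]{GMV17}. So your step~3 needs no new local bijection and no calibration on the diagonal-hugging path; your fallback is exactly what the paper does.

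One bookkeeping slip in step~2 must be fixed, because as written the reduction to step~3 does not follow. The BGLX dinv of a parking function is $\dinv(P)+\operatorname{tdinv}-\operatorname{maxtdinv}$, with the \emph{unprimed} (BGLX) temporary dinv. The remark gives $\operatorname{tdinv}=\operatorname{maxldinv}-\ldinv$. Also $\operatorname{maxtdinv}=\operatorname{maxldinv}$, since $\ldinv=0$ is attained by labeling north steps in increasing order of content. Hence the BGLX dinv equals $\dinv(P)-\ldinv$, which agrees with $\stat=C(m,n,k)-\pdinv-\ldinv$ exactly when $C(m,n,k)-\pdinv(P)=\dinv(P)$.

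Your formula instead lets $\operatorname{tdinv}'=\ldinv$ enter with a plus sign. Matching $\stat$ would then require $C(m,n,k)-\pdinv(P)-\dinv(P)+\operatorname{maxtdinv}=2\ldinv$. That equates a path-only quantity with a label-dependent one, so it fails in general.
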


\begin{rmk}
The original formulation of the rational shuffle theorem utilizes the $\operatorname{dinv}$ statistic. In \cite[Lemma 3.3]{GMV17}, it was shown that $C(m,n,k)-\pdinv(P) = \dinv(P)$.
\end{rmk}

\subsection{The reformulation}

We now reformulate Theorem~\ref{thm: Rational shuffle} in terms of $P$-tableaux of cyclic $(m,n)$-parking functions. Let $\PTab_{m,n}(k)$ be the subset of $\cPF_{m,n}^k$ consisting of tuples $\pi^\bullet=(\pi^{(1)},\dots,\pi^{(k)})$ satisfying:
\begin{itemize}
    \item $\aseq(P_{\pi^{(1)}})_1=0$, and
    \item $\pi^{(\ell)} \nprec \pi^{(\ell+1)}$ for all $1 \le \ell \le k-1$.
\end{itemize}
In other words, $\PTab_{m,n}(k)$ is the set of \emph{$P$-tableaux} of shape $(k)$ in the sense of \cite{Gas96}, subject to an additional condition on the area sequence.

\begin{proposition}\label{prop: reformulation of rational shuffle}
There exists a bijection $\Gamma: \PTab_{m,n}(k) \to \PF_{km,kn}$ such that
\begin{equation*}
\pdinv(P_{\pi^\bullet})+\ldinv(\pi^{\bullet})=\pdinv(P_{\Gamma(\pi^\bullet)})+\ldinv(\Gamma(\pi^{\bullet}))
\end{equation*}
and $z(\pi^{\bullet})=z(\Gamma(\pi^{\bullet}))$ for $\pi^{\bullet
}\in \PTab_{m,n}(k)$.
\end{proposition}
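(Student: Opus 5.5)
We construct $\Gamma$ as a sequence of local ``uncrossing'' moves, and use Lemma~\ref{lem:stat+pos depend only on skeleton} to control the statistics. The skeleton $z(\pi^\bullet)$ is the multiset of triples (row, area value, label) over all north steps. Any rearrangement that preserves, at each height $y$, the multiset of labeled north steps (their geometric positions and labels) therefore preserves $z$. By Lemma~\ref{lem:stat+pos depend only on skeleton}, the combination $\pdinv+\ldinv+\pos$ is also preserved. It then suffices to check that $\pos$ vanishes on both sides, or more precisely that $\pos(\pi^\bullet)=\pos(\Gamma(\pi^\bullet))$.

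\textbf{Construction of $\Gamma$.} Given $\pi^\bullet\in\PTab_{m,n}(k)$ with each component a cyclic parking function (connecting permutation the identity), we reconnect the north steps to obtain a tuple whose connecting permutation is the cyclic shift $\ell\mapsto\ell+1$. We work height by height, from $y=1$ up to $y=n+1$, where the boundary step is the translate of the first step. At each height we choose the permutation $\psi'$ of the Claim in Lemma~\ref{lem:stat+pos depend only on skeleton} so that sheet $\ell$ is extended by the unique admissible continuation. ``Unique admissible'' here means the continuation that produces no positive crossing, taken in a greedy leftmost fashion with ties broken by labels. The resulting global path is read by concatenating strips.

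\textbf{Why the output is a parking function, and why $\Gamma$ is a bijection.} The condition $\pi^{(\ell)}\nprec\pi^{(\ell+1)}$ should be exactly what guarantees two things:
\begin{enumerate}
\item the reconnected strips glue across $y=n$ to a single $(km,kn)$-path with strictly increasing labels at the strip boundaries, connecting $\ell\to\ell+1$ rather than closing up early;
\item the path stays weakly above the diagonal.
\end{enumerate}
The condition $\aseq(P_{\pi^{(1)}})_1=0$ makes the glued path start at the origin. For the inverse, take a $(km,kn)$-parking function, viewed as an element of $\PF^k_{m,n}$ with the cyclic connecting permutation. At each height we reconnect the steps into $k$ closed cyclic parking functions and order them in the unique way that makes consecutive components $\prec$-incomparable in the required direction. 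This is the standard Gasharov-type correspondence between $P$-tableaux of a one-row shape and ``non-nested'' chains.

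\textbf{Main obstacle.} The hard part is verifying that $\pos=0$ on both sides, or that the positive crossings correspond. Concretely, we must show the following.
\begin{enumerate}
\item A $(km,kn)$-parking function, cut into strips, has no positive crossings among its strips. This should hold because pieces of a single non-self-intersecting path in later strips lie to the right in the appropriate sense.
\item The non-comparability conditions $\pi^{(\ell)}\nprec\pi^{(\ell+1)}$ force the crossing count to vanish, or to match, on the $\PTab$ side.
\end{enumerate}
For both, we would run the case analysis of the Claim (Cases 1--3) once more, tracking which configuration realizes each crossing and comparing it against the definition of $\prec$. We would also check carefully that the boundary conventions at $y=n$ match: the translate of the first step, and the virtual step $N_{n+1}$.
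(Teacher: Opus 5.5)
Your reduction matches the paper's. $\Gamma$ preserves the skeleton, so by Lemma~\ref{lem:stat+pos depend only on skeleton} everything comes down to $\pos(\pi^\bullet)=\pos(\Gamma(\pi^\bullet))$. The gap is that you never define a map for which this can be checked, and the check you plan is false: $\pos$ does not vanish on either side. Take $m=1$, $n=2$, $k=2$ and the $(2,4)$-parking function with path $NNENNE$ and labels $1,3,2,4$. Both of its strips are the path $NNE$ through the origin, labeled $(1,3)$ and $(2,4)$. This same pair lies in $\PTab_{1,2}(2)$, since the two paths share an east step and so $\pi^{(1)}\nprec\pi^{(2)}$. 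At height $1$ the steps labeled $1$ and $2$ can exchange their continuations $4$ and $3$. At height $2$ the steps labeled $3$ and $4$ can exchange the translated first steps. Both crossings are positive, because at the coinciding positions the earlier sheet carries the smaller label. So $\pos=2$ on both sides, and the construction has to transport $\pos$ rather than kill it.

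Your construction cannot do this as stated. Whether a crossing at height $y$ is positive depends only on which sheet carries the left step at height $y$, not on the continuation you choose there. So ``the continuation that produces no positive crossing'' is not a selection rule. A height-by-height rewiring also gives no control over whether the connecting permutation becomes the single cycle $\ell\mapsto\ell+1$, or over which strip comes first. Your inverse is not determined either, because the incomparable ordering is not unique. For $m=n=1$, the path $NE$ through the origin labeled $1$ and the same path labeled $2$ form an element of $\PTab_{1,1}(2)$ in either order. These two elements must go to the two different parking functions $NENE$ labeled $1,2$ and $NENE$ labeled $2,1$, which have the same multiset of cyclic components.

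The missing idea is the paper's recursive $\mix$. You insert $\pi^{(\ell+1)}$ into the already built $(\ell m,\ell n)$-parking function by exchanging tails at the \emph{last} crossing with the \emph{highest-index} strip it crosses. This does three things:
\begin{itemize}
\item The hypothesis $\pi^{(\ell)}\nprec\pi^{(\ell+1)}$ is exactly what guarantees such a crossing exists. Otherwise the new path would lie weakly left of every strip, forcing $\pi^{(\ell)}\prec\pi^{(\ell+1)}$.
\item The inverse un-mixes at the last crossing of the highest-index consecutive pair of strips.
\item $\pos$ is preserved. The inserted path crosses no strip beyond $\ell_{\max}$, so moving its sheet past them changes nothing. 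The two exchanged tails sit in adjacent sheets and have no crossing with each other after $v$.
\end{itemize}
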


To the best of our knowledge, even the identity $|\PTab_{m,n}(k)|= |\PF_{km,kn}|
$ was previously unknown. To construct a desired bijection $\Gamma: \PTab_{m,n}(k) \to \PF_{km,kn}$, we work recursively using a statistic-preserving $\mix$ operation.

\begin{definition}[The Mix Operation and the bijection $\Gamma$]

     Let $\pi^\bullet=(\pi^{(1)},\dots,\pi^{(k)})$ be a $(km, kn)$-parking function and let $\tau$ be a cyclic $(m, n)$-parking function. Let $\ell_{\max}$ be the maximal integer $\ell$ such that $\tau$ and $\pi^{(\ell)}$ have a crossing, if such an integer exists. Let $v$ be the last crossing (in the standard path order) between $\tau$ and $\pi^{(\ell_{\max})}$. Then the \emph{mix} of $\tau$ and $\pi^\bullet$, denoted by $\mix(\tau,\pi^\bullet)$, is the $((k+1)m, (k+1)n)$-parking function $(\pi_{\mix}^{(1)},\dots,\pi_{\mix}^{(k+1)})$ defined by:
    \begin{itemize}
        \item $\pi_{\mix}^{(\ell)} = \pi^{(\ell)}$ for $\ell < \ell_{\max}$.
        \item $\pi_{\mix}^{(\ell_{\max})}$ is formed by the portion of $\pi^{(\ell_{\max})}$ before $v$ concatenated with the portion of $\tau$ after $v$.
        \item $\pi_{\mix}^{(\ell_{\max}+1)}$ is formed by the portion of $\tau$ before $v$ concatenated with the portion of $\pi^{(\ell_{\max})}$ after $v$.
        \item $\pi_{\mix}^{(\ell)} = \pi^{(\ell-1)}$ for $\ell > \ell_{\max}+1$.
    \end{itemize}
    Note that $\mix(\tau,\pi^\bullet)$ is only defined when $\tau$ and a component of $\pi^\bullet$ has a crossing.

    Now, given a $P$-tableau $\pi^\bullet= (\pi^{(1)},\dots, \pi^{(k)}) \in \PTab_{m,n}(k)$, we recursively define the map $\Gamma$ as follows:
    \begin{enumerate}
        \item $\Gamma^{(1)}(\pi^\bullet) = \pi^{(1)}$.
        \item $\Gamma^{(\ell+1)}(\pi^\bullet) = \mix\left( \pi^{(\ell+1)},\Gamma^{(\ell)}(\pi^\bullet) \right)$ for $1 \le \ell \le k-1$.
    \end{enumerate}
   Finally, we set $\Gamma(\pi^\bullet) = \Gamma^{(k)}(\pi^\bullet)$.
\end{definition}

\begin{example}Consider a tuple of $(8,5)$-lattice paths $\pi^\bullet=(\pi,\pi',\pi'')$, defined as follows:

\begin{center}{$\pi=$
\begin{tikzpicture}[scale=0.3, >=stealth, baseline=(current bounding box.center)]

    % 1. The 5x8 Grid
    % (0,0) is bottom-left, (8,5) is top-right
    \draw[step=1, gray!40, thin] (0,0) grid (8,5);

    % 2. The diagonal line from (0,0) to (8,5)
    \draw[thin, black] (0,0) -- (8,5);

    % 3. The Path: NNNEEENNEEEEE
    \draw[line width=2pt, line cap=round, line join=round] 
        (0,0) -- (0,1)  % N
              -- (0,2)  % N
              -- (0,3)  % N
              -- (1,3)  % E
              -- (2,3)  % E
              -- (3,3)  % E
              -- (3,4)  % N
              -- (3,5)  % N
              -- (4,5)  % E
              -- (5,5)  % E
              -- (6,5)  % E
              -- (7,5)  % E
              -- (8,5); % E

\end{tikzpicture}
$\pi'=$
\begin{tikzpicture}[scale=0.3, >=stealth, baseline=(current bounding box.center)]

    % 1. The Grid (spanning -6 to 8)
    \draw[step=1, gray!40, thin] (-6,0) grid (8,5);

    % 2. The diagonal line (main region)
    \draw[thin, black] (0,0) -- (8,5);

    % 3. The Path: NNNEEEEEENENE starting from (-6,0)
    \draw[red, line width=2pt, line cap=round, line join=round] 
        (-6,0) -- (-6,1) % N
               -- (-6,2) % N
               -- (-6,3) % N
               -- (-5,3) % E
               -- (-4,3) % E
               -- (-3,3) % E
               -- (-2,3) % E
               -- (-1,3) % E
               -- (0,3)  % E
               -- (0,4)  % N
               -- (1,4)  % E
               -- (1,5)  % N
               -- (2,5); % E

\end{tikzpicture}
$\pi''=$\begin{tikzpicture}[scale=0.3, >=stealth, baseline=(current bounding box.center)]

    % 1. The Grid (extended to -8)
    \draw[step=1, gray!40, thin] (-8,0) grid (8,5);

    % 2. The diagonal line (main region reference)
    \draw[thin, black] (0,0) -- (8,5);

    % 3. The Path: NNNNEEEEEEEEN starting from (-8,0)
    \draw[blue, line width=2pt, line cap=round, line join=round] 
        (-8,0) -- (-8,1) % N
               -- (-8,2) % N
               -- (-8,3) % N
               -- (-8,4) % N
               -- (-7,4) % E
               -- (-6,4) % E
               -- (-5,4) % E
               -- (-4,4) % E
               -- (-3,4) % E
               -- (-2,4) % E
               -- (-1,4) % E
               -- (0,4)  % E
               -- (0,5); % N

\end{tikzpicture}}
\end{center}

For simplicity, we disregard labels and interpret a crossing as a geometric intersection of two lattice paths. In other words, labels are adjusted so that geometric intersection is a crossing. We then compute $\Gamma(\pi^\bullet)$, which is obtained by recursive applications of the mix operation. We begin with $\Gamma^{(1)}(\pi^\bullet)=\pi$.

By displaying $\pi$ and $\pi'$ together in the same sheet, we observe a crossing at $(0,3)$. This is the last crossing point, highlighted by a green dot in the figure below.
\begin{center}
\begin{tikzpicture}[scale=0.3, >=stealth, baseline=(current bounding box.center)]

    % 1. The Grid (spanning -6 to 8)
    \draw[step=1, gray!40, thin] (-6,0) grid (8,5);

    % 2. The diagonal line (main region)
    \draw[thin, black] (0,0) -- (8,5);

    % 3. The Path: NNNEEEEEENENE starting from (-6,0)
    \draw[red, line width=2pt, line cap=round, line join=round] 
        (-6,0) -- (-6,1) % N
               -- (-6,2) % N
               -- (-6,3) % N
               -- (-5,3) % E
               -- (-4,3) % E
               -- (-3,3) % E
               -- (-2,3) % E
               -- (-1,3) % E
               -- (0,3)  % E
               -- (0,4)  % N
               -- (1,4)  % E
               -- (1,5)  % N
               -- (2,5); % E
    \draw[line width=2pt, line cap=round, line join=round] 
        (0,0) -- (0,1)  % N
              -- (0,2)  % N
              -- (0,3)  % N
              -- (1,3)  % E
              -- (2,3)  % E
              -- (3,3)  % E
              -- (3,4)  % N
              -- (3,5)  % N
              -- (4,5)  % E
              -- (5,5)  % E
              -- (6,5)  % E
              -- (7,5)  % E
              -- (8,5); % E
    \fill[green] (0,3) circle (8pt);

\end{tikzpicture}
\end{center}
Applying the mix operation yields
\begin{center}
$\Gamma^{(2)}=\operatorname{mix}(\pi',\Gamma^{(1)}(\pi^\bullet))=$
\begin{tikzpicture}[scale=0.3, >=stealth, baseline=(current bounding box.center)]

    % 1. Grid covering the area (extended to fit the path)
    \draw[step=1, gray!40, thin] (0,0) grid (16,10);

    % 2. Reference Diagonal for 16x10 (dashed to show the target frame)
    \draw[dashed, black!60] (0,0) -- (16,10);

    % 3. The Segments
    
    % Segment 1: Black NNN
    % Starts at (0,0), ends at (0,3)
    \draw[black, line width=2pt, line cap=round] 
        (0,0) -- (0,1) -- (0,2) -- (0,3);

    % Segment 2: Red NENENNNEEEEEE (added one E)
    % Starts at (0,3), now ends at (8,8)
    \draw[red, line width=2pt, line cap=round] 
        (0,3) -- (0,4)  % N
              -- (1,4)  % E
              -- (1,5)  % N
              -- (2,5)  % E
              -- (2,6)  % N
              -- (2,7)  % N
              -- (2,8)  % N
              -- (3,8)  % E
              -- (4,8)  % E
              -- (5,8)  % E
              -- (6,8)  % E
              -- (7,8)  % E
              -- (8,8); % E (Added step)

    % Segment 3: Black EEENNNEEEEE
    % Now starts at (8,8)
    \draw[black, line width=2pt, line cap=round] 
        (8,8) -- (9,8)   % E
              -- (10,8)  % E
              -- (11,8)  % E
              -- (11,9)  % N
              -- (11,10) % N
              -- (12,10) % E
              -- (13,10) % E
              -- (14,10) % E
              -- (15,10) % E
              -- (16,10);% E

\end{tikzpicture}
\end{center}

Next, we visualize the cyclic copies of $\pi''$ and $\Gamma^{(2)}(\pi^\bullet)$ together as follows:
\begin{center}
    \begin{tikzpicture}[scale=0.3, >=stealth, baseline=(current bounding box.center)]

    % 1. Grid covering the extended area (-8 to 16 in x, 0 to 11 in y)
    \draw[step=1, gray!40, thin] (-8,0) grid (16,10);

    % 2. Reference Diagonal (optional, kept for context)
    \draw[dashed, black!60] (0,0) -- (16,10);

    % --- The Original Path (Now all BLACK) ---
    % Combined into one continuous draw command for simplicity
    \draw[black, line width=2pt, line cap=round, line join=round] 
        % Segment 1: NNN from (0,0) to (0,3)
        (0,0) -- (0,1) -- (0,2) -- (0,3)
        % Segment 2: NENENNNEEEEEE from (0,3) to (8,8)
              -- (0,4) -- (1,4) -- (1,5) -- (2,5) -- (2,6) -- (2,7) -- (2,8)
              -- (3,8) -- (4,8) -- (5,8) -- (6,8) -- (7,8) -- (8,8)
        % Segment 3: EEENNNEEEEE from (8,8) to (16,11)
              -- (9,8) -- (10,8) -- (11,8) -- (11,9) -- (11,10) -- (11,10)
              -- (12,10) -- (13,10) -- (14,10) -- (15,10) -- (16,10);

    % --- The New BLUE Paths ---
    
    % Part 1: Solid Blue NNNNEEEEEEEEN starting from (-8,0)
    % Ends at (0,5)
    \draw[blue, line width=2pt, line cap=round, line join=round] 
        (-8,0) -- (-8,1) -- (-8,2) -- (-8,3) -- (-8,4) % NNNN
               -- (-7,4) -- (-6,4) -- (-5,4) -- (-4,4) -- (-3,4) -- (-2,4) -- (-1,4) -- (0,4) % EEEEEEEE
               -- (0,5); % N

    % Part 2: Dotted Blue NNNNEEEEEEEEN continuing from (0,5)
    % Ends at (8,10)
    % Added 'dotted' style here
    \draw[blue, line width=2pt, loosely dashed, line cap=round, line join=round] 
        (0,5) -- (0,6) -- (0,7) -- (0,8) -- (0,9) % NNNN
              -- (1,9) -- (2,9) -- (3,9) -- (4,9) -- (5,9) -- (6,9) -- (7,9) -- (8,9) % EEEEEEEE
              -- (8,10); % N

    % Optional: Mark endpoint of blue path
    \fill[green] (0,4) circle (8pt);

\end{tikzpicture}
\end{center}
Note that there is a crossing at $(0,4)$, which is the last crossing, highlighted by a green dot. We apply the mix operation again to obtain:
\begin{center}
    $\Gamma(\pi^\bullet)=\Gamma^{(3)}(\pi^\bullet)=\operatorname{mix(\pi'',\Gamma^{(2)}(\pi^\bullet))=}$
    \begin{tikzpicture}[scale=0.3, >=stealth, baseline=(current bounding box.center)]

    % 1. Grid covering the area 24x15
    \draw[step=1, gray!40, thin] (0,0) grid (24,15);

    % 2. Reference Diagonal (from start to end)
    \draw[dashed, black!60] (0,0) -- (24,15);

    % --- Segment 1: Black NNNN ---
    % (0,0) -> (0,4)
    \draw[black, line width=2pt, line cap=round, line join=round] 
        (0,0) -- (0,4);

    % --- Segment 2: Blue NNNNN EEEEEEEE ---
    % Starts at (0,4)
    % NNNNN -> (0,9)
    % EEEEEEEE -> (8,9)
    \draw[blue, line width=2pt, line cap=round, line join=round] 
        (0,4) -- (0,9) -- (8,9);

    % --- Segment 3: Black ENENNN EEEEEEEEE NNEEEEE ---
    % Starts at (8,9)
    \draw[black, line width=2pt, line cap=round, line join=round] 
        (8,9)   -- (9,9)    % E
                -- (9,10)   % N
                -- (10,10)  % E
                -- (10,13)  % NNN
                -- (19,13)  % EEEEEEEEE
                -- (19,15)  % NN
                -- (24,15); % EEEEE

\end{tikzpicture}
\end{center}
\end{example}

\begin{proof}[Proof of \Cref{prop: reformulation of rational shuffle}]
We proceed by verifying the three necessary properties of the map $\Gamma$: well-definedness, bijectivity, and statistic preservation.

$\bullet$ Well-definedness:
We must show that, at each step of the recursive construction, the operation $\mix$ is well defined.
Let $\Gamma^{(\ell-1)}(\pi^\bullet) = (\tau^{(1)},\dots,\tau^{(\ell-1)})$
denote the $((\ell-1)m,(\ell-1)n)$-parking function constructed thus far, and let
$\pi^{(\ell)}$ be the next cyclic $(m,n)$-parking function to be added.
It suffices to show that $\pi^{(\ell)}$ has a crossing with some $\tau^{(i)}$.

Suppose, for the sake of contradiction, that $\pi^{(\ell)}$ has no crossings with any of the $\tau^{(i)}$.
Since $P_{\tau^{(1)}}$ passes through the origin, the path $\pi^{(\ell)}$ must remain weakly to the left of each $\tau^{(i)}$, share no east steps with them, and satisfy the labeling constraint at every geometric intersection.
Moreover, since $(\tau^{(1)},\dots,\tau^{(\ell-1)})$ contains a shifted copy of $\pi^{(\ell-1)}$, we obtain $\pi^{(\ell-1)} \prec \pi^{(\ell)}$, which is a contradiction.

$\bullet$ Bijectivity:
We construct the inverse map $\Psi: \PF_{km,kn} \to \PTab_{m,n}(k)$ by inverting the recursive step.
Suppose we have an $(\ell m, \ell n)$-parking function $\Pi^{(\ell)} = \Gamma^{(\ell)}(\pi^\bullet)$ and seek to reconstruct the cyclic $(m,n)$-parking function $\pi^{(\ell)}$ and the predecessor path $\Pi^{(\ell-1)}$.

The procedure is as follows:
\begin{enumerate}
    \item Identify the \emph{maximal} index $i \in \{1, \dots, \ell-1\}$ such that the $i$-th and $(i+1)$-th components of $\Pi^{(\ell)}$ have a crossing.
    \item Let $v$ be the \emph{last} geometric point (in the standard path ordering) where such a crossing occurs.
    \item We perform the `un-mix' operation at $v$:
    \begin{itemize}
        \item The cyclic $(m,n)$-parking function $\pi^{(\ell)}$ is recovered by concatenating the portion of the $(i+1)$-th component ending at $v$ with the portion of the $i$-th component starting at $v$.
        \item The $i$-th component of the predecessor path $\Pi^{(\ell-1)}$ is restored by concatenating the portion of the $i$-th component ending at $v$ with the portion of the $(i+1)$-th component starting at $v$.
        \item The components at indices $j < i$ in $\Pi^{(\ell-1)}$ remain identical to the corresponding components in $\Pi^{(\ell)}$.
        \item All components at indices $j > i+1$ in $\Pi^{(\ell)}$ are shifted down by one index to form the components $j-1$ in $\Pi^{(\ell-1)}$.
    \end{itemize}
\end{enumerate}
This operation is the exact inverse of $\mix$.
By recursively applying this procedure from $\ell=k$ down to $1$, we uniquely recover the tuple in $\PTab_{m,n}(k)$.

$\bullet$ Preservation of statistics:
Note that the $\mix$ operation obviously preserves the skeleton of the paths. Furthermore, the maximality of the intersection point $v$ selected during the $\mix$ operation ensures that the number of positive crossings remains the same.
Consequently, by \Cref{lem:stat+pos depend only on skeleton}, we conclude that 
\begin{equation*}
\pdinv(P_{\pi^\bullet})+\ldinv(\pi^{\bullet})=\pdinv(P_{\Gamma(\pi^\bullet)})+\ldinv(\Gamma(\pi^{\bullet})).
\end{equation*}
\end{proof}

Given \(\pi^{\bullet}=(\pi^{(1)},\dots,\pi^{(\ell)})\in \PF_{\ell m,\ell n}\) and \(\tau\in \cPF_{m,n}\), if the last labeled north step of \(\pi^{(\ell)}\) can be extended with the first labeled north step of \(\tau\), then
\[
\mix(\tau,\pi^{\bullet})=(\pi^{(1)},\dots,\pi^{(\ell)},\tau).
\]
In particular, this always holds if \(P_{\tau}\) passes through the origin. We therefore conclude the following: for
\((\pi^{(1)},\dots,\pi^{(k)})\in \PTab_{m,n}(k)\), the path
\(P_{\Gamma(\pi^{\bullet})}\) passes through \((im,in)\) if and only if
\(P_{\pi^{(i+1)}}\) passes through the origin.

For a composition \(\alpha \models k\), let \(\PTab_{m,n}(k;\alpha)\) denote the
collection of \((\pi^{(1)},\dots,\pi^{(k)})\in \PTab_{m,n}(k)\) such that
\[
    \aseq(P_{\pi^{(i)}})_1 \begin{cases}
        = 0 & \text{if } i=1+\sum_{j=1}^{\ell}\alpha_j \text{ for some } \ell \ge 0, \\
        > 0 & \text{otherwise}.
    \end{cases}
\]
Then the bijection \(\Gamma\) restricts to a bijection
\[
\PTab_{m,n}(k;\alpha)\;\longrightarrow\;
\bigl\{\pi^{\bullet}\in \PF_{km,kn} : \touch(\pi^{\bullet})=\alpha \bigr\}.
\]

Combined with Theorem~\ref{thm: Rational shuffle} and
Proposition~\ref{prop: reformulation of rational shuffle}, we finally deduce:

\begin{corollary}\label{cor: compositional shuffle final}
For a pair of coprime positive integers \((m,n)\) and a composition
\(\alpha\models k\) of a positive integer \(k\), we have
\[
C_{\alpha}[-MX^{m,n}] \cdot 1
=
\sum_{\pi^\bullet\in\PTab_{m,n}(k;\alpha)}
q^{\stat(\pi^\bullet)}
\,t^{\mathrm{area}(P_{\pi^\bullet})}
\,x^{f_{\pi^\bullet}}.
\]
\end{corollary}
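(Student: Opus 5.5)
The corollary follows by chaining Theorem~\ref{thm: Rational shuffle} with the restricted bijection $\Gamma$ and checking that every weight factor is carried over. I would start from Theorem~\ref{thm: Rational shuffle}, which gives $C_\alpha[-MX^{m,n}]\cdot 1$ as a sum over $\pi^\bullet\in\PF_{km,kn}$ with $\touch(\pi^\bullet)=\alpha$. I would then reindex this sum through the bijection $\Gamma\colon \PTab_{m,n}(k;\alpha)\to\{\pi^\bullet\in\PF_{km,kn}:\touch(\pi^\bullet)=\alpha\}$ established just before the statement. For this it remains to check that the three weights $q^{\stat}$, $t^{\area}$ and $x^{f}$ agree for $\pi^\bullet$ and $\Gamma(\pi^\bullet)$.

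For $t^{\area}$ and $x^f$ I would use the skeleton identity $z(\pi^\bullet)=z(\Gamma(\pi^\bullet))$ from Proposition~\ref{prop: reformulation of rational shuffle}. Under the specialization $z_{i,j,c}\mapsto t^{j}x_c$, the skeleton maps to $t^{\area(P_{\pi^\bullet})}x^{f_{\pi^\bullet}}$. This uses that the area of a tuple is the sum of its area sequences, and that the area computed sheet-by-sheet for a $(km,kn)$-parking function (strips $(\ell-1)n<y\le \ell n$, cells of nonnegative content) coincides with the usual area between the path and the line $my=nx$. For $q^{\stat}$, the constant $C(m,n,k)$ depends only on $(m,n,k)$. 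The remaining part $\pdinv+\ldinv$ is preserved, again by Proposition~\ref{prop: reformulation of rational shuffle}.

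The main point to double-check is the restriction of $\Gamma$ to the $\alpha$-graded pieces. It rests on the observation that $\mix(\tau,\pi^\bullet)$ simply appends $\tau$ exactly when $P_\tau$ passes through the origin. Conversely, when $\aseq(P_\tau)_1>0$, the last crossing with the final component must lie strictly before the end. So the mixed path does not return to the diagonal at the new junction, and the later mixes do not create or destroy earlier returns at the points $(im,in)$.

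I would argue this by induction on $\ell$. The key claim is that $\mix$ only alters components $\ell_{\max}$ and $\ell_{\max}+1$ after the crossing point $v$. Diagonal touches of the global path sit exactly at sheet boundaries where the next component starts at the origin of its sheet. Hence the touch set is updated precisely by the starting height condition on $\pi^{(\ell+1)}$. With this in hand, summing over $\PTab_{m,n}(k;\alpha)$ gives the stated formula.
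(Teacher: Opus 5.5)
Your overall route is the paper's, and the weight bookkeeping is fine. You reindex the sum in Theorem~\ref{thm: Rational shuffle} through $\Gamma$. You read off $t^{\area}x^{f}$ from $z(\pi^\bullet)=z(\Gamma(\pi^\bullet))$ under $z_{i,j,c}\mapsto t^{j}x_c$, and $q^{\stat}$ from the preservation of $\pdinv+\ldinv$ in Proposition~\ref{prop: reformulation of rational shuffle}.

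The gap is in the step you flag yourself: the restriction of $\Gamma$ to $\PTab_{m,n}(k;\alpha)$. You speak of ``the last crossing with the final component'' and claim that $\mix$ only alters components $\ell_{\max}$ and $\ell_{\max}+1$ after $v$. Both are inaccurate. First, $\tau$ need not cross the last component at all. Second, $\mix$ inserts a new component at position $\ell_{\max}+1$ and moves every component of index $>\ell_{\max}$ up one sheet. The worked $(8,5)$ example preceding the proof of Proposition~\ref{prop: reformulation of rational shuffle} shows this: $\pi''$ crosses only the first component of $\Gamma^{(2)}(\pi^\bullet)$, so $\ell_{\max}=1$, and the old second component becomes the third. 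Touches at $(jm,jn)$ are read off from which positions start at their sheet's origin. So this reindexing would move a touch from $((j-1)m,(j-1)n)$ to $(jm,jn)$ whenever some component at a position $j>\ell_{\max}$ starts at the origin. Nothing in your argument rules this out, so ``later mixes do not create or destroy earlier returns'' is not established. Your remark that the crossing lies strictly before the end is beside the point: the inserted component automatically starts where $\tau$ starts.

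What is missing is the inequality $\ell_{\max}\ge p$. Here $p$ is the largest index such that the $p$-th component of $\Gamma^{(\ell)}(\pi^\bullet)$ starts at the origin. You can get it by strengthening your induction. Claim that components $p,\dots,\ell$ of $\Gamma^{(\ell)}(\pi^\bullet)$ coincide with $\Gamma(\pi^{(p)},\dots,\pi^{(\ell)})$. This is a parking function whose first component passes through the origin and which contains the pieces of $\pi^{(\ell)}$.

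Now apply the well-definedness argument from the proof of Proposition~\ref{prop: reformulation of rational shuffle} to this tail. If $\pi^{(\ell+1)}$ crossed none of these components, then $\pi^{(\ell)}\prec\pi^{(\ell+1)}$, contradicting the $P$-tableau condition. Hence $\ell_{\max}\ge p$.

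Consequently no component after position $\ell_{\max}$ starts at the origin, so the shift creates or destroys no touches. The mix acts only on the tail, which keeps the strengthened induction going: $p$ is unchanged if $\pi^{(\ell+1)}$ is off the origin, and becomes $\ell+1$ if it is appended. The touch composition of $\Gamma(\pi^\bullet)$ is then exactly the one prescribed by $\alpha$. With this lemma inserted, your proof goes through.
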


\section{Rational Shareshian--Wachs involution}\label{Sec: sw}

The main goal of this section is to prove the following identities:
\begin{align}
    \h_a \h_b = \h_b \h_a, 
    \qquad 
    \hhat_a \hhat_b &= \hhat_b \hhat_a, 
    \qquad 
    \hb_a \hb_b = \hb_b \hb_a, 
    \label{eq: sw} \\
    q(\hhat_b \hb_a - \hhat_{a-1} \hb_{b+1})
    &= \hb_a \hhat_b - \hb_{b+1} \hhat_{a-1},
    \label{eq: qsw}
\end{align}
where the operators $\h_i$, $\hhat_i$, and $\hb_i$ will be defined shortly. In the next section, we will work exclusively with these identities to derive our main results.

In particular, \eqref{eq: sw} may be viewed as a rational generalization of the symmetry property of chromatic quasisymmetric functions \cite[Theorem 4.5]{SW16}. We prove \eqref{eq: sw} by constructing a rational analogue of the Shareshian--Wachs involution, which we denote by $\SW$. To this end, we introduce the notion of a \emph{wiggle diagram}, which is purely combinatorial and elementary. We first define the map $\SW$ on wiggle diagrams and then apply it to tuples of cyclic parking functions by identifying them with wiggle diagrams via a rotation. Equation~\eqref{eq: qsw} is proved by a slight modification of the map $\SW$.

\begin{subsection}{Operators}\label{subsec: operators}
We introduce the operators $\h_i$, $\hhat_i$, and $\hb_i$, which will play a central role in the next section. Although these operators depend on the integers $m$ and $n$, we omit them from the notation for simplicity.

\begin{definition}
For a positive integer $a$, we define a \emph{chain} of length $a$ to be a tuple $(\pi^{(1)},\dots,\pi^{(a)})$ such that each $\pi^{(i)} \in \cPF_{m,n}$ and
\[
    \pi^{(i)} \prec \pi^{(i+1)} \quad \text{for } 1 \le i \le a-1.
\]
Let $\CH_a$ denote the set of all such chains. We further decompose $\CH_a$ as a disjoint union $\CH_a = \bar{\CH}_a \sqcup \hat{\CH}_a$, where a chain belongs to $\bar{\CH}_a$ if and only if the path $P_{\pi^{(1)}}$ passes through the origin.
\end{definition}

For a sequence of tuples $v=(\pi_1^{\bullet},\dots,\pi_{\ell}^{\bullet})$ where each $\pi_i^{\bullet}\in \CH_{a_i}$ for some $a_i$, we regard it as a large tuple in $\cPF^{\sum a_i}_{m,n}$ given by a concatenation. Under this identification, we may abuse the notation to write $\stat(v)$ and $z(v)$. By Lemma~\ref{lem:stat+pos depend only on skeleton}, permuting entries of $v$ within the same $\pi_i^{\bullet}$ does not change the value $\stat$.

Let $\mathbb{C}(q,t)[\mathbf{z}]$ be the polynomial ring in indeterminates $\{z_{ijk}\}$, where $1 \le i \le n$, $j \in \mathbb{Z}_{\ge 0}$, and $k \in \mathbb{Z}_{\ge 1}$. We define $\mathcal{D} \subset \mathbb{C}(q,t)[\mathbf{z}]$ to be the sub-algebra generated by the elements $z(\pi)$ for $\pi \in \cPF_{m,n}$.

\begin{definition}
We define an operator $\h_a$ acting on $\mathcal{D}$ as follows. Note that $\mathcal{D}$ is linearly generated by elements of the form $z(\tau^{\bullet})$, where $\tau^{\bullet}$ is a tuple in $\cPF_{m,n}^k$. We set
\begin{equation*}
    \h_a \cdot z(\tau^{\bullet})
    =
    \sum_{\pi^\bullet \in \CH_a}
    q^{\stat(\tau^{\bullet},\pi^\bullet)-\stat(\tau^{\bullet})} \,
    z(\tau^{\bullet},\pi^\bullet).
\end{equation*}
For another tuple $\nu^{\bullet}$ satisfying $z(\nu^{\bullet}) = z(\tau^{\bullet})$, by Lemma~\ref{lem:stat+pos depend only on skeleton} it is straightforward to check that
\[
\stat(\tau^{\bullet},\pi^\bullet)-\stat(\tau^{\bullet})
=
\stat(\nu^{\bullet},\pi^\bullet)-\stat(\nu^{\bullet}),
\]
which guarantees that the action of $\h_a$ is well-defined.  The operators $\hhat_a$ and $\hb_a$ are defined in a similar manner by replacing $\CH_a$ with $\hat{\CH}_a$ or $\bar{\CH}_a$, respectively. Note that we trivially have $\h_a=\hhat_a+\hb_a$. For degenerate cases, we set
$\h_0 = \hhat_0 = 1,
\hb_0 = 0$,
and
$\h_a = \hhat_a = \hb_a = 0$, for $a < 0.
$\end{definition}

For example, we have
\begin{equation*}
     \hat{\mathfrak{h}}_2 \bar{\mathfrak{h}}_3 \mathfrak{h}_4 \cdot 1=\sum_{(\pi^{\bullet},\tau^{\bullet},\nu^{\bullet})\in \CH_4\times \bar{\CH}_3\times \hat{\CH}_2} q^{\stat(\pi^{\bullet},\tau^{\bullet},\nu^{\bullet})}z(\pi^{\bullet},\tau^{\bullet},\nu^{\bullet}).
\end{equation*}

The operators $\h_i$, $\hhat_i$, and $\hb_i$ do not commute in general; however, operators of the same type do commute, as stated in \eqref{eq: sw}. To show that $\h_a \h_b = \h_b \h_a$, it suffices to construct a bijection between $\CH_a \times \CH_b$ and $\CH_b \times \CH_a$ that preserves both the statistic $\stat$ and the skeleton $z$. Such a bijection will also imply
\[
\hhat_a \hhat_b = \hhat_b \hhat_a
\quad \text{and} \quad
\hb_a \hb_b = \hb_b \hb_a.
\]
To this end, we delve into the world of wiggle diagrams.

\begin{subsection}{Combinatorics of wiggle diagrams}

We introduce wiggle diagrams and construct a map $\SW$ on them. Our goal is to prove Proposition~\ref{prop: sw wiggle diagram}. Later, we will prove \eqref{eq: sw} by simply regarding an element of $\CH_a \times \CH_b$ as a wiggle diagram and applying the map $\SW$.
\end{subsection}

\begin{definition}
    For positive integers $a$ and $b$, an \emph{$(a,b)$-wiggle diagram} consists of a collection of $a$ black strands and $b$ red strands in $\mathbb{R}^{2}$. Each strand is defined by a continuous curve $\gamma: \mathbb{R} \to \mathbb{R}^2$. We construct these strands by taking a fundamental curve from $(r,0)$ to $(r,1)$ for some $r \in \mathbb{R}$ and extending it periodically in the vertical direction such that the resulting curve is continuous and $y$-monotonic. Specifically, we require:
    \begin{enumerate}
        \item  Each strand intersects every horizontal line $y=c$ at exactly one point.
        \item  Strands of the same color do not intersect.
        \item  Strands of different colors intersect at finitely many points within the band $0 \leq y < 1$.
    \end{enumerate}
    Due to the periodicity, the diagram is fully determined by its restriction to the horizontal band $0 \leq y < 1$, which we call the \emph{principal band}.
\end{definition}

Later, each strand in a wiggle diagram corresponds to an element of $\cPF_{m,n}$. For a wiggle diagram $\Pi$, we establish the following terminology. An intersection point between a red strand and a black strand is called a \emph{crossing}. We classify the sign of a crossing based on the relative orientation of the strands:
    a crossing is \emph{positive} (respectively \emph{negative}) if the strand entering the crossing from the south-west direction is a black (respectively red) strand.

The wiggle diagram naturally partitions $\mathbb{R}^{2}$ into connected open components called \emph{regions}. By definition, no strand passes through the interior of a region. Around any crossing $v$, there are locally four distinct adjacent regions. Since every strand is $y$-monotonic (strictly increasing in the vertical direction), we designate these as the \emph{north}, \emph{south}, \emph{east}, and \emph{west} regions of $v$. Specifically, the region strictly above $v$ is the north region, the region strictly below $v$ is the south region, and the east and west regions are defined accordingly. Now we associate an infinite planar graph $G(\Pi)$, defined as follows:
\begin{enumerate}
    \item Vertices: The vertices of $G(\Pi)$ are the regions of the wiggle diagram $\Pi$.
     \item Edges: Two vertices in $G(\Pi)$ are connected by an edge if the corresponding regions are the west and east regions of the same crossing. There might be multiple edges between two vertices.
\end{enumerate}
By construction, $G(\Pi)$ admits a natural embedding into $\mathbb{R}^2$ as an infinite planar graph whose edges pass through the crossings. Consider a connected component of $G(\Pi)$. If this component has infinitely many regions or contains an unbounded region, then we call the regions in this component \emph{regular regions}. The set of regular regions of $\Pi$ is denoted by $\SR(\Pi)$.

We define $\SW(\Pi)$ to be the configuration obtained by changing the colors of the segments (i.e., parts of strands) that are surrounded by regular regions. The following lemma is useful for determining whether a region in $\Pi$ is regular. For a region $R$, we denote by $R^{+}$ the region obtained from $R$ by a periodic shift upward by one period. Note that for an unbounded region $R$, we have $R^+=R$.
\begin{lem}\label{lem: infinite region classification}
    For a wiggle diagram $\Pi$, a bounded region $R$ is regular if and only if $R$ and $R^{+}$ are connected in $G(\Pi)$.
\end{lem}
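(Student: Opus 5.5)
We have to show that a bounded region $R$ is regular exactly when $R$ and $R^+$ lie in the same connected component of $G(\Pi)$. The idea is to use the vertical periodicity of $\Pi$: translation by one period upward, $\sigma: R\mapsto R^+$, is an automorphism of $G(\Pi)$ that permutes its connected components. Finiteness of the principal band then controls how many regions a component can contain.

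\emph{The ``if'' direction.} Suppose $R$ and $R^+$ are joined by a path in $G(\Pi)$. Applying $\sigma$ repeatedly, $R^{+}$ is joined to $R^{++}$, and so on. Hence the whole orbit $\{R^{+j}\}_{j\ge 0}$ lies in the component $K$ of $R$. Since $R$ is bounded, $R^{+j}$ is a vertical translate of $R$ by $j$ periods, so $R^{+j}\neq R$ for $j\ge1$. The orbit is therefore infinite, so $K$ has infinitely many regions and $R$ is regular by definition.

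\emph{The ``only if'' direction.} Suppose $R$ is regular, so its component $K$ is infinite or contains an unbounded region. I would first record a finiteness fact: there are only finitely many crossings in the principal band, so only finitely many $\sigma$-orbits of bounded regions and of edges. I would also record a locality fact: a bounded region is enclosed between two consecutive crossings on adjacent strands, so it has bounded height.

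\emph{Case 1: $K$ contains an unbounded region $U$.} Then $U^+=U$, so $\sigma(K)$ is the component containing $U^+=U$, which means $\sigma(K)=K$. Since $R\in K$, we get $R^+\in\sigma(K)=K$, and $R$, $R^+$ are connected.

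\emph{Case 2: $K$ is infinite and all its regions are bounded.} Since $K$ is infinite but meets only finitely many $\sigma$-orbits, pigeonhole gives a region $S$ and some $j\neq 0$ with $S$ and $S^{+j}$ both in $K$. It follows that $\sigma^j(K)=K$. The remaining task is to upgrade this to $\sigma(K)=K$.

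\emph{The main obstacle} is this upgrade from $\sigma^j(K)=K$ to $\sigma(K)=K$ in Case 2. I would use planarity. Since $\sigma^j(K)=K$, $K$ contains a path from $S$ to $S^{+j}$. Taking the union of its $\sigma^j$-translates gives a bi-infinite, vertically periodic curve $\Gamma$ inside $K$, built from regions and edges through crossings. If $\sigma(K)\neq K$, then $K,\sigma(K),\dots,\sigma^{j-1}(K)$ are pairwise distinct components, each carrying such a curve $\sigma^i(\Gamma)$. Two different components must have disjoint curves, so these curves are pairwise disjoint and $y$-unbounded in both directions. They therefore split the plane into vertical strips, ordered from left to right. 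But $\sigma$ commutes with the horizontal ordering: it maps the leftmost curve to a curve that is again leftmost, so it fixes the left-to-right order. It also permutes the $j$ components cyclically with no fixed point when $j>1$. These two facts contradict each other, so $j=1$.

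In practice I would first reduce to $j$ minimal with $\sigma^j(K)=K$. I would also check the ordering claim carefully. The delicate point is that the curves $\sigma^i(\Gamma)$ are disjoint precisely because they lie in different components, whose regions and crossing-edges are disjoint. Given $\sigma(K)=K$, we again have $R^+\in K$, which finishes the proof.
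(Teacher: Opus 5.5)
Your proof is correct and follows the same route as the paper. The ``if'' direction uses the infinite orbit $\{R^{+j}\}$, the case of an unbounded region uses $U^{+}=U$, and the all-bounded case uses pigeonhole on the finitely many translation orbits to get $\sigma^{j}(K)=K$, then planarity to upgrade to $\sigma(K)=K$; your left-to-right ordering of the disjoint periodic curves $\sigma^{i}(\Gamma)$ is a more careful version of the paper's one-line claim that a path $P$ from $Q$ to $Q^{k+}$ and its shift $P^{+}$ must share a vertex because $G(\Pi)$ is planar.
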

\begin{proof}
    The converse direction is trivial. For the forward direction, assume that $R \in \SR(\Pi)$. If $R$ is connected to some unbounded region $R'$, then, by a periodic shift, $R^{+}$ is also connected to $(R')^{+} = R'$. Therefore, we conclude that $R$ and $R^{+}$ are connected. Now assume that the connected component containing $R$ has only bounded regions. Then this component must contain infinitely many regions. Thus, there exists a region $Q$ such that $Q$ and $Q^{k+}$ lie in the same component for some $k \ge 1$. Let $P$ be a path from $Q$ to $Q^{k+}$, and consider its periodic shift $P^{+}$, which is a path from $Q^{+}$ to $Q^{(k+1)+}$. Since the graph $G(\Pi)$ is planar, the two paths $P$ and $P^{+}$ must share a vertex. Hence, $Q$ and $Q^{+}$ are connected. Consequently, we deduce that $R$ and $R^{+}$ are connected. 
\end{proof}

\begin{proposition}\label{prop: sw wiggle diagram}
For an $(a,b)$-wiggle diagram $\Pi$, the configuration $\SW(\Pi)$ is a $(b,a)$-wiggle diagram. Moreover, the number of positive crossings in the principal band of $\Pi$ is equal to the number of positive crossings in the principal band of $\SW(\Pi)$.
\end{proposition}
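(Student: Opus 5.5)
The plan is to analyze $\SW$ locally: first at a single crossing, then along the boundary of each connected component of $G(\Pi)$.

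\textbf{Local structure.} At a crossing $v$ of $\Pi$, exactly two strand segments meet, one black and one red. The four adjacent regions are north, south, east and west, and the east and west regions are joined by an edge of $G(\Pi)$. So either both are regular or neither is. This is the key dichotomy. I would classify each segment (a piece of a strand between consecutive crossings) by the two regions on its two sides. For a segment running from one crossing to the next, its left and right regions are the west and east regions of those crossings in the appropriate sense. Therefore either both sides of a segment are regular or both are non-regular. Segments with regular regions on both sides are the ones "surrounded by regular regions," and these get recolored.

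\textbf{Well-definedness of the recoloring.} At a crossing $v$ whose east and west regions are regular, all four incident half-segments are bordered by the east or west region. Hence all four are recolored. A black strand coming in from the south-west then continues as red toward the north-east, and vice versa. I would reinterpret $\SW(\Pi)$ as a \emph{new} set of strands: at each regular crossing the new strands pass straight through with colors swapped. Equivalently, follow the old black segment into $v$ and continue along the old red segment out of $v$, recolored; with swapped colors, the strand keeps going $y$-monotonically through the crossing. At a non-regular crossing nothing changes. The new curves are therefore continuous and $y$-monotone, and periodicity is preserved because $\SR(\Pi)$ is invariant under the vertical shift. Same-colored new strands cannot cross, since every remaining intersection point is still an intersection between one segment of each color.

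\textbf{Counting strands.} The recolored black strands must number $b$ and the red ones $a$. Fix a horizontal line $y=c$ avoiding crossings. By Lemma~\ref{lem: infinite region classification}, a connected component of regular regions contains $R$ and $R^+$, so it forms a vertically periodic chain that meets every horizontal line. I would argue that on $y=c$ the strands crossing regular intervals are exactly the intervals traversed by such a chain. Moving across such a component from the left end of a regular stretch to the right, the strands met alternate in a controlled way. The main obstacle is to show that between consecutive non-regular regions on $y=c$, the segments bounded by regular regions can be recolored so that the black and red counts are exchanged globally. My first attempt would be a winding/flux count. The difference between the number of black and red strands crossing the segment of $y=c$ bounded by a regular component is a homotopy invariant along the periodic chain. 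Moving one period up, this invariant must equal its negative after recoloring. I expect the cleanest version to be that each regular component separates the plane into a left and a right part, and all strands strictly inside the component get swapped. If a direct argument fails, I would fall back on an induction on the number of crossings in the principal band, resolving an innermost bounded non-regular region.

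\textbf{Positive crossings.} At a non-regular crossing, the colors and sign are unchanged. At a regular crossing, the south-west incoming segment swaps color, so a positive crossing becomes negative. However, the new strands pass straight through, so $v$ is no longer a transversal crossing of a black and red strand in the same sense. I would pair each regular crossing's contribution with the crossings of the new diagram and check that the positive crossings in the principal band are exactly the same set. This reduces to verifying that at a regular crossing, before recoloring, the south-west strand and the new strand entering from the south-west have the same color-sign relationship. That is a local two-case check.
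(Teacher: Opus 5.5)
Your proof has two genuine gaps: the local dichotomy it rests on is false, and the positive-crossing argument cannot work locally.

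\textbf{The local dichotomy.} A segment leaving a vertex $v$ upward borders the north region of $v$ on one side and the east or west region of $v$ on the other. Nothing in $G(\Pi)$ joins the north region to the east/west regions, so the two sides of a segment can have different regularity. Figure~\ref{fig: wiggle example} shows this. The leftmost black strand has the unbounded region on its left and the non-regular full simple region $R_1$ (or a shift of it) on its right, and it stays black in $\SW(\Pi)$.

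Consequently your claim that all four half-segments at a crossing with regular east and west regions are recolored is wrong. At the lower vertex of $R_2$ the east and west regions are regular, but the north region is $R_2$. So only the two lower half-segments change color, and the positive transversal crossing becomes a negative touching point.

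The correct local statement is weaker. The two upper half-segments at $v$ are recolored together, exactly when the north region and the east/west regions are regular; the two lower ones behave likewise with the south region. This does give well-defined $y$-monotone new strands with no same-color intersections. Since same-colored strands keep their left-to-right order, each new strand is also $1$-periodic. What remains is that there are exactly $b$ new black strands. Equivalently, on a horizontal line the points \emph{not} recolored must contain equally many black and red points. That is a global statement, and your winding/flux remarks do not prove it.

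\textbf{The crossing count.} This part fails outright: the \emph{set} of positive crossings is not preserved. In Figure~\ref{fig: wiggle example}, the negative crossings (blue dots) of $\Pi$ and of $\SW(\Pi)$ have only one point in common. A crossing whose four surrounding regions are all regular has all four half-segments swapped, so it changes sign. Other crossings switch between transversal crossings and touching points. Only the \emph{number} of positive crossings survives, through a global cancellation, so no local two-case check can establish it.

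This global input is what the paper supplies and what your proposal lacks. The paper simplifies $\Pi$ by operations (I) and (II). Lemma~\ref{lem: operation valid} shows that each operation changes the positive-crossing count of $\Pi$ and of $\SW(\Pi)$ by the same amount; this requires tracking how the regularity of the nearby regions changes. Lemmas~\ref{y} and~\ref{z} then show that a reduced diagram is as in Proposition~\ref{prop: wiggle classification}: isolated full simple regions plus crossing-free strands, where both claims are immediate. Your fallback ``induction on the number of crossings'' points in this direction. However, the choice of moves, the invariance of both claims under them, and the classification of the terminal diagrams are the actual content of the proof, and none of them is supplied.
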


We prove Proposition~\ref{prop: sw wiggle diagram} by applying a sequence of operations (see Definition~\ref{def: operation}) to reduce to the base cases described in Proposition~\ref{prop: wiggle classification}. We begin by introducing the necessary terminology.

A connected portion of a strand joining two points on it is called a \emph{segment} of the strand. A shape bounded by two segments—one necessarily from a red strand and the other from a black strand—is called a \emph{bigon}. The segments defining a bigon are referred to as the \emph{boundary red segment} and the \emph{boundary black segment} of the bigon. A region that is a bigon is called \emph{simple}. Note that for any bigon, there exists a simple region contained inside it. For example, in Figure~\ref{fig: bigon}, a shape $R_1 \cup R_2$ is a bigon as it is bounded by two segments connecting $v_1$ and $v_2$. However, it is not a region and we can find a simple region $R_1$ inside this bigon.

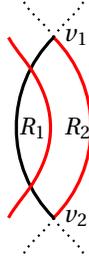
\begin{figure}
 \begin{tikzpicture}[scale=0.4]

    \coordinate (v1) at (0,3);

    \coordinate (v2) at (0,-3);

    \node[right] at (v1) {$v_1$};

    \node[right] at (v2) {$v_2$};

    \draw[red, very thick] (v1) to[out=-45, in=45] (v2);

    % Dotted extension for the red curve (Stays on the RIGHT)

    % It leaves v1 going up-right (angle 60)

    \draw[dotted, thick, black] (v1) to[out=60, in=-120] (1.0, 4.2);

    % Bottom extension (Stays on the right)

    \draw[dotted, thick, black] (v2) to[out=-60, in=120] (1.0, -4.2);

    % --- Region R1 (The left side) ---

    % Main black curve (Left boundary of the big lens)

    \draw[black, very thick] (v1) to[out=-135, in=135] (v2);

    % Dotted extension for the black curve (Stays on the LEFT)

    % It leaves v1 going up-left (angle 120)

    \draw[dotted, thick, black] (v1) to[out=120, in=-60] (-1.0, 4.2);

    % Bottom extension (Stays on the left)

    \draw[dotted, thick, black] (v2) to[out=-120, in=60] (-1.0, -4.2);

    % --- The Middle Red Curve ---

    % This creates the small region R1 on the left side

    % We define intersection points v3 and v4 for the labels if needed

    \coordinate (v3) at (-0.6, 1.7);

    \coordinate (v4) at (-0.6, -1.7);

    % The curve itself

    \draw[red, very thick] (-1.5, 3) to[out=-60, in=120] (v3) 

                           to[out=-60, in=60] (v4) 

                           to[out=-120, in=60] (-1.5, -3);

    % --- Labels for the Regions ---

    \node at (-0.7, 0) { $R_1$};

    \node at (0.8, 0) { $R_2$};

\end{tikzpicture}

\caption{A bigon and a simple region inside it.}\label{fig: bigon}
\end{figure}

A simple region is called \emph{full} if the difference between the $y$-coordinates of its highest and lowest points is exactly 1. Otherwise, the simple region is called \emph{proper}.

\begin{definition}\label{def: operation}
We define the following operations on $\Pi$.
\begin{itemize}
    \item Operation (I): If there exists a proper simple region such that no other strands touch its boundary, we contract the region.
    \item Operation (II): If there exists a simple region such that exactly one other strand touches its boundary only at a point, we detach the touching point.
\end{itemize}
Operations (I) and (II) must be performed periodically: if we perform operation (I) or (II) in a band $r \leq y < r+1$, then the same operation must be performed in every shifted band $r+z \leq y < r+z+1$ for all $z \in \mathbb{Z}$.    
\end{definition}

\begin{example}
   On the left of Figure~\ref{fig: wiggle example}, the $(4,2)$-wiggle diagram $\Pi$ is shown. The region $R_{1}$ is a full simple region, while $R_2$ is a proper simple region. Negative
crossings in a principal band are indicated by blue dots. In the graph $G(\Pi)$, there is an edge connecting $R_{3}$ and $R_{4}$,
and also an edge connecting $R_{3}$ and $R_{4}^{+}$. Therefore, $R_{4}$
and $R_{4}^{+}$ lie in the same connected component in the graph $G(\Pi)$, which implies that
$R_{3}, R_{4} \in \Reg(\Pi)$. Note that regions $R_{1}$ and $R_{2}$ together with their periodic shifts are not regular. Accordingly, $\SW(\Pi)$ is shown on the right. We can check that the number of positive crossings remains the same in $\Pi$ and $\SW(\Pi)$.

\newcommand{\Mone}{
    % Vertical lines
     \foreach \x in {2, 4, 6, 8} {
        \draw[very thick, black] (\x, 0) -- (\x, 8);
    }

    % Horizontal dashed lines and labels
    \draw[dashed, very thick, blue] (1.5, 0) -- (8.5, 0);
    \node[blue, left, font=\Large] at (1.5, 0) {$y=0$};

    \draw[dashed, very thick, blue] (1.5, 4) -- (8.5, 4);
    \node[blue, left, font=\Large] at (1.5, 4) {$y=1$};

%leftmost red strand
      \draw[red, very thick, line cap=round] (3, 0) .. controls (2.5, 1) .. (2,1.5);
      \draw[red, very thick, line cap=round] (2,1.5) .. controls (3, 3) .. (3,4);
       \draw[red, very thick, line cap=round] (3, 4) .. controls (2.5, 5) .. (2,5.5);
      \draw[red, very thick, line cap=round] (2,5.5) .. controls (3, 7) .. (3,8);

%right red strand
      %\draw[red, very thick, line cap=round] (7, 0) .. controls (1.5, 1) .. (8,3.5);
      \draw[red, very thick, line cap=round] (7, 0) .. controls (5.5, 0.5) .. (4,0.7);
       \draw[red, very thick, line cap=round] (4, 0.7) .. controls (2.5, 1) .. (4,1.9);
        \draw[red, very thick, line cap=round] (4,1.9) .. controls (6,2.5) .. (8,3.5);
      \draw[red, very thick, line cap=round] (8, 3.5) .. controls (7.5, 3.7) .. (7,4);
      \begin{scope}[shift={(0,4)}]
         \draw[red, very thick, line cap=round] (7, 0) .. controls (5.5, 0.5) .. (4,0.7);
       \draw[red, very thick, line cap=round] (4, 0.7) .. controls (2.5, 1) .. (4,1.9);
        \draw[red, very thick, line cap=round] (4,1.9) .. controls (6,2.5) .. (8,3.5);
      \draw[red, very thick, line cap=round] (8, 3.5) .. controls (7.5, 3.7) .. (7,4);
    \end{scope}

    % Blue dots at intersections
    \fill[blue] (4, 1.9) circle (2.5pt);
    \fill[blue] (6, 2.55) circle (2.5pt);
    \fill[blue] (8, 3.5) circle (2.5pt);

    % Region labels
    \node[font=\huge\bfseries] at (2.5, 3.3) {$R_1$};
    \node[font=\huge\bfseries] at (3.6, 1.3) {$R_2$};
    \node[font=\huge\bfseries] at (5, 3.5) {$R_3$};
     \node[font=\huge\bfseries] at (7, 1.5) {$R_4$};
    \node[font=\huge\bfseries] at (7, 5.5) {$R_4^{+}$};

    \node[font=\huge\bfseries] at (5, -1) {$\Pi$};
}

\newcommand{\Mtwo}{
  \foreach \x in {2} {
        \draw[very thick, black] (\x, 0) -- (\x, 8);
    }

    \foreach \x in {  6, 8} {
        \draw[very thick, red] (\x, 0) -- (\x, 8);
    }
    \foreach \x in {4} {
        \draw[very thick, red] (\x, 0) -- (\x, 0.7);
    }
     \foreach \x in {4} {
        \draw[very thick, black] (\x, 0.7) -- (\x, 1.9);
    }
    \foreach \x in {4} {
        \draw[very thick, red] (\x, 1.9) -- (\x, 4.7);
    }
     \foreach \x in {4} {
        \draw[very thick, black] (\x, 4.7) -- (\x, 5.9);
    }
    \foreach \x in {4} {
        \draw[very thick, red] (\x, 5.9) -- (\x, 8);
    }

    % Horizontal dashed lines and labels
    \draw[dashed, very thick, blue] (0.5, 0) -- (8.5, 0);
    \node[blue, left, font=\Large] at (0.5, 0) {$y=0$};

    \draw[dashed, very thick, blue] (0.5, 4) -- (8.5, 4);
    \node[blue, left, font=\Large] at (0.5, 4) {$y=1$};

%leftmost red strand
      \draw[red, very thick, line cap=round] (3, 0) .. controls (2.5, 1) .. (2,1.5);
      \draw[red, very thick, line cap=round] (2,1.5) .. controls (3, 3) .. (3,4);
       \draw[red, very thick, line cap=round] (3, 4) .. controls (2.5, 5) .. (2,5.5);
      \draw[red, very thick, line cap=round] (2,5.5) .. controls (3, 7) .. (3,8);

%right red strand
      %\draw[red, very thick, line cap=round] (7, 0) .. controls (1.5, 1) .. (8,3.5);
      \draw[black, very thick, line cap=round] (7, 0) .. controls (5.5, 0.5) .. (4,0.7);
       \draw[red, very thick, line cap=round] (4, 0.7) .. controls (2.5, 1) .. (4,1.9);
        \draw[black, very thick, line cap=round] (4,1.9) .. controls (6,2.5) .. (8,3.5);
      \draw[black, very thick, line cap=round] (8, 3.5) .. controls (7.5, 3.7) .. (7,4);
      \begin{scope}[shift={(0,4)}]
         \draw[black, very thick, line cap=round] (7, 0) .. controls (5.5, 0.5) .. (4,0.7);
       \draw[red, very thick, line cap=round] (4, 0.7) .. controls (2.5, 1) .. (4,1.9);
        \draw[black, very thick, line cap=round] (4,1.9) .. controls (6,2.5) .. (8,3.5);
      \draw[black, very thick, line cap=round] (8, 3.5) .. controls (7.5, 3.7) .. (7,4);
    \end{scope}

    % Blue dots at intersections
    \fill[blue] (4, 1.9) circle (2.5pt);
    \fill[blue] (4, 0.7) circle (2.5pt);
    \fill[blue] (6, 0.35) circle (2.5pt);

 \node[font=\huge\bfseries] at (5, -1) {$\SW(\Pi)$};

}

\newcommand{\Mthree}{ \foreach \x in {2, 4, 6, 8} {
        \draw[very thick, black] (\x, 0) -- (\x, 4);
    }

    % Horizontal dashed lines and labels
    \draw[dashed, very thick, blue] (1.5, 0) -- (8.5, 0);
    \node[blue, left, font=\Large] at (1.5, 0) {$y=0$};

    \draw[dashed, very thick, blue] (1.5, 4) -- (8.5, 4);
    \node[blue, left, font=\Large] at (1.5, 4) {$y=1$};

%leftmost red strand
      \draw[red, very thick, line cap=round] (3, 0) .. controls (2.5, 1) .. (2,1.5);
      \draw[red, very thick, line cap=round] (2,1.5) .. controls (3, 3) .. (3,4);

%right red strand
      %\draw[red, very thick, line cap=round] (7, 0) .. controls (1.5, 1) .. (8,3.5);
      \draw[red, very thick, line cap=round] (7, 0) .. controls (5.5, 0.5) .. (4,0.7);
       \draw[red, very thick, line cap=round] (4, 0.7) .. controls (2.5, 1) .. (4,1.9);
        \draw[red, very thick, line cap=round] (4,1.9) .. controls (6,2.5) .. (8,3.5);
      \draw[red, very thick, line cap=round] (8, 3.5) .. controls (7.5, 3.7) .. (7,4);

    \node[font=\huge\bfseries] at (3.6, 1.3) {$R$};

    \node[font=\huge\bfseries] at (5, -1) {$\Pi$};
}

\newcommand{\Mfour}{ \foreach \x in {2, 4, 6, 8} {
        \draw[very thick, black] (\x, 0) -- (\x, 4);
    }

    % Horizontal dashed lines and labels
    \draw[dashed, very thick, blue] (1.5, 0) -- (8.5, 0);
    \node[blue, left, font=\Large] at (1.5, 0) {$y=0$};

    \draw[dashed, very thick, blue] (1.5, 4) -- (8.5, 4);
    \node[blue, left, font=\Large] at (1.5, 4) {$y=1$};

%leftmost red strand
      \draw[red, very thick, line cap=round] (3, 0) .. controls (2.5, 1) .. (2,1.5);
      \draw[red, very thick, line cap=round] (2,1.5) .. controls (3, 3) .. (3,4);

%right red strand
      %\draw[red, very thick, line cap=round] (7, 0) .. controls (1.5, 1) .. (8,3.5);
      \draw[red, very thick, line cap=round] (7, 0) .. controls (5.5, 0.5) .. (4,1.3);
        \draw[red, very thick, line cap=round] (4,1.3) .. controls (6,2.5) .. (8,3.5);
      \draw[red, very thick, line cap=round] (8, 3.5) .. controls (7.5, 3.7) .. (7,4);

    \node[font=\huge\bfseries] at (5.3, 1.3) {$R'$};

    \node[font=\huge\bfseries] at (5, -1) {$\Pi'$};
}

\newcommand{\Mfive}{ \foreach \x in {2, 4, 6, 8} {
        \draw[very thick, black] (\x, 0) -- (\x, 4);
    }

    % Horizontal dashed lines and labels
    \draw[dashed, very thick, blue] (1.5, 0) -- (8.5, 0);
    \node[blue, left, font=\Large] at (1.5, 0) {$y=0$};

    \draw[dashed, very thick, blue] (1.5, 4) -- (8.5, 4);
    \node[blue, left, font=\Large] at (1.5, 4) {$y=1$};

%leftmost red strand
      \draw[red, very thick, line cap=round] (3, 0) .. controls (2.5, 1) .. (2,1.5);
      \draw[red, very thick, line cap=round] (2,1.5) .. controls (3, 3) .. (3,4);

%right red strand
      %\draw[red, very thick, line cap=round] (7, 0) .. controls (1.5, 1) .. (8,3.5);
      \draw[red, very thick, line cap=round] (7, 0) .. controls (3.3, 1) .. (8,3.5);
      \draw[red, very thick, line cap=round] (8, 3.5) .. controls (7.5, 3.7) .. (7,4);

    \node[font=\huge\bfseries] at (5, -1) {$\Pi''$};
}

\begin{figure}[htbp]
\centering
\resizebox{\textwidth}{!}{
\begin{tikzpicture}[xscale=1.5, yscale=1]

    \begin{scope}[shift={(0,0)}]
        \Mone
    \end{scope}

    \begin{scope}[shift={(10,0)}]
        \Mtwo
    \end{scope}

\end{tikzpicture}
}
\caption{Wiggle diagram $\Pi$ and its $\SW(\Pi)$.}
\label{fig: wiggle example}
\end{figure}

See Figure~\ref{fig: operation example}. We can perform operation (I) on the simple region $R$ in $\Pi$, shown on the
left. The resulting wiggle diagram
$\Pi'$ is shown in the middle. We can then perform operation (II) on the
simple region $R'$ in $\Pi'$, obtaining $\Pi''$ shown on the right.

\begin{figure}[htbp]
\centering
\resizebox{\textwidth}{!}{
\begin{tikzpicture}[xscale=1.5, yscale=1]

    \begin{scope}[shift={(0,0)}]
        \Mthree
    \end{scope}

    \begin{scope}[shift={(10,0)}]
        \Mfour
    \end{scope}

    \begin{scope}[shift={(20,0)}]
        \Mfive
    \end{scope}

\end{tikzpicture}
}
\caption{Applications of operations (I) and (II).}
\label{fig: operation example}
\end{figure}
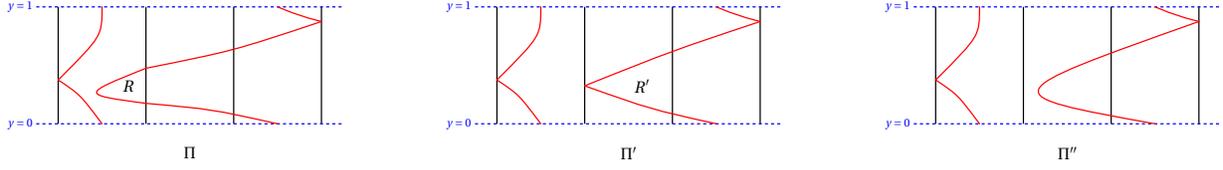
    
\end{example}
    \begin{lem}\label{lem: operation valid}
    Given an $(a,b)$-wiggle diagram $\Pi$, let $\Pi'$ be the diagram obtained by applying operation (I) or (II). If Proposition~\ref{prop: sw wiggle diagram} holds for $\Pi'$, then it also holds for $\Pi$.
\end{lem}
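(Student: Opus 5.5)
We prove the statement by comparing $\Pi$ with $\Pi'$ locally. The operation changes the diagram only inside a small neighbourhood $U$ of the simple region $S$ being contracted or detached, and of its periodic translates. Outside $\bigcup_z U^{+z}$ the two diagrams coincide, and so do their regions and crossings. The plan is to show three things:
\begin{itemize}
\item $\Reg(\Pi)$ and $\Reg(\Pi')$ correspond to each other outside $U$;
\item the recolouring $\SW$ commutes with the operation, i.e.\ $\SW(\Pi)$ is obtained from $\SW(\Pi')$ by performing the same local move in reverse (expanding the bigon, or re-touching the point);
\item the number of positive crossings in the principal band changes by the same amount under the move in $\Pi$ and in $\SW(\Pi)$.
\end{itemize}
Given these, $\SW(\Pi)$ is a $(b,a)$-wiggle diagram because $\SW(\Pi')$ is one and the inverse move preserves the wiggle-diagram axioms. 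The crossing count for $\Pi$ then follows from the count for $\Pi'$.

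\textbf{Operation (I).} Let $S$ be a proper simple region bounded by a red segment and a black segment meeting at $v_1$ (top) and $v_2$ (bottom). No other strand touches $\partial S$, so the west and east regions at $v_1$ and at $v_2$ are two regions $W$ and $E$, one on each side of the bigon; $S$ is the south region of $v_1$ and the north region of $v_2$.

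\emph{Effect on $G$.} Contracting $S$ deletes the two parallel edges $W$--$E$ through $v_1$ and $v_2$ and the vertex $S$. The vertex $S$ is isolated in $G(\Pi)$: it is only ever a north or south region, never a west or east one. Since $S$ is proper, $S^+\neq S$, so by Lemma~\ref{lem: infinite region classification} $S$ is not regular. In $\Pi'$ the two edges are replaced by nothing, but $W$ and $E$ are still joined whenever one edge survived. Here I must be careful: after contraction the two strands no longer cross there, so the edge $W$--$E$ disappears entirely. The cleaner statement is therefore that $W$ and $E$ merge or stay separate consistently, and I would check connectivity via Lemma~\ref{lem: infinite region classification}.

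\emph{Colour consistency.} At $v_1$ and $v_2$ the two boundary segments cross twice with opposite signs. If $W$ and $E$ are regular, both boundary segments of $S$ lie between regular regions, so both swap colour. Hence $\SW(\Pi)$ again has a bigon there, and the two crossings still have opposite signs. If they are not regular, nothing changes. In either case the pair contributes exactly one positive crossing per period to both $\Pi$ and $\SW(\Pi)$, and none after contraction. The counts therefore shift equally.

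\textbf{Operation (II).} Here a third strand touches $\partial S$ at a single point $p$. Detaching it removes the two crossings at $p$, or a tangency, which again carry opposite signs. The same argument applies: the regions around $p$ either all lie in one component of $G$ or are separated consistently, and the recolouring acts on both sides of $p$ in the same way.

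The main obstacle is the regularity bookkeeping: showing that a region is regular in $\Pi$ if and only if its image is regular in $\Pi'$. Contraction can merge two regions into one, and detaching can split one region into two. For this I would use Lemma~\ref{lem: infinite region classification}. For any path in $G(\Pi)$ from $R$ to $R^+$, replace each edge through $v_1$ or $v_2$ (respectively through $p$) by a route in $G(\Pi')$ through the merged region, and conversely. This needs care in the full case of (II), where $S$ spans a whole period and the periodic copies of the move interact. Planarity arguments like those in the proof of the lemma should handle it.
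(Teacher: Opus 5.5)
\textbf{Verdict: genuine gap.} The first two of your three bullets are false in general, and your derivation of the third depends on them. Specifically, $\Reg(\Pi)$ and $\Reg(\Pi')$ need not agree away from the move, so $\SW(\Pi)$ is not just $\SW(\Pi')$ with the move undone. Your local colour analysis for operation (I) is also wrong.

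\textbf{Operation (I).} Take one black and one red strand crossing transversally at $v_2$ and then $v_1$ in each period, and let $S$ be the bigon between them, with black on the left.
\begin{itemize}
\item The bounded regions of $\Pi$ are the translates of $S$ and of the region $N$ north of $v_1$. Each is only ever a north or south region, hence isolated and not regular.
\item Every strand piece borders one of these regions, so $\SW(\Pi)=\Pi$.
\item After contracting $S$, all three regions of $\Pi'$ are unbounded, so $\SW(\Pi')$ swaps both strands. The two recolourings therefore disagree everywhere off $S$.
\end{itemize}
This example refutes several of your steps:
\begin{itemize}
\item The sides of $S$ are never recoloured, since each borders $S$, which you yourself showed is not regular. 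So ``both boundary segments of $S$ swap colour'' is false.
\item Contraction merges the north region of $v_1$ with the south region of $v_2$. Here that turns the non-regular $N$ into part of an unbounded, hence regular, region.
\item Contraction also deletes the $W$--$E$ edges at $v_1,v_2$. Add a red strand on the right that dips across the black one and back between $v_1$ and $v_2+1$. Then the region east of $S$ is regular in $\Pi$ only through those two edges, and it is isolated in $\Pi'$.
\item Consequently no rerouting via Lemma~\ref{lem: infinite region classification} can restore the correspondence of regular regions.
\end{itemize}
In the first example the positive-crossing counts are $1$ for $\Pi$ and $\SW(\Pi)$, and $0$ for $\Pi'$ and $\SW(\Pi')$. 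So the conclusion survives there, but not for your reason. The paper's Case~1 rests on the same ``re-insertion'' claim, so you cannot import it to close this step.

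One usable observation: a crossing changes sign under $\SW$ if and only if its west and south regions are both regular (west and east are always joined by an edge). What must be compared is the signed count of such crossings before and after the move.

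\textbf{Operation (II).} The touching point is a single crossing, not a pair of opposite sign. More importantly, your claim that ``the recolouring acts on both sides of $p$ in the same way'' misses the subcase that is the real content of the paper's Case~2.
\begin{itemize}
\item Suppose the region $C$ east of the touching point is regular.
\item Then the bigon $B$ is regular in $\Pi$, because its only edge goes to $C$ through the touching point. But $B$ is isolated in $\Pi'$.
\item So the sides of $B$ flip in $\SW(\Pi)$ but not in $\SW(\Pi')$, and $\SW$ again does not commute with the move.
\item The counts agree only because the top vertex of $B$ changes sign. This cancels the touching point, which has become positive in $\SW(\Pi)$.
\end{itemize}
The paper obtains this from its case split on the regularity of $A,D,E$ and of $C$. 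That split is legitimate for (II) because the move is local for every region except $B$: the merged regions $D,E$ were already joined through $A$, and $B$ is a leaf of $G(\Pi)$.
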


\begin{proof}
    Case 1):
    Suppose $\Pi'$ is obtained by applying operation (I) to a simple region $R$ in $\Pi$. Since $R$ is a simple region with no other strands touching its boundary, it is an isolated vertex in $G(\Pi)$. Clearly, $R$ is not regular. We deduce that $\SW(\Pi)$ is identical to $\SW(\Pi')$ except for the re-insertion of the original region $R$ (with its original colors of boundary segments). We conclude that if $\SW(\Pi')$ is a $(b,a)$-wiggle diagram then so is $\SW(\Pi)$.

    Now we analyze positive crossings. Without loss of generality, let the black boundary segment be located to the left of $R$. Then the number of positive crossings increases by exactly 1 when going from $\Pi'$ to $\Pi$, and similarly from $\SW(\Pi')$ to $\SW(\Pi)$.
    
    Case 2):
    Now assume $\Pi'$ is obtained by applying operation (II) to a simple region $B$ where the other strand touches the boundary only once, as illustrated on the left of Figure~\ref{fig: operation 2 example}. Let $\ell_1$ and $\ell_2$ be boundary segments of $B$ from the left and without loss generality assume that the segment $\ell_3$ (with a black color) touches the segment $\ell_2$ at a touching point $v$. Denote by $D$, $C$, and $E$ the regions north, east, and south of $v$, and let $A$ be a region to the left of $B$ across the segment $\ell_1$.  In the graph $G(\Pi)$, there exist edges connecting $A$ to $D$ and $A$ to $E$. 
    
    In $\Pi'$, the touching segment $\ell_3$ is detached, causing regions $D$ and $E$ to merge into a single region $D'$. In the new graph $G(\Pi')$, $D'$ remains connected to $A$. We conclude that  $A, D, E \in \Reg(\Pi)$ if and only if $A, D' \in \Reg(\Pi')$. Note that graph structures for $G(\Pi')$ and $G(\Pi)$ other than those local configurations are identical. 

 We analyze the color changes of $\ell_1,\ell_2,\ell_3$ in $\SW(\Pi)$ versus $\SW(\Pi')$:
    \begin{itemize}
        \item If $A, D, E \notin \Reg(\Pi)$ (and thus $A, D' \notin \Reg(\Pi')$), then the segments $\ell_1, \ell_2, \ell_3$ do not change color in either $\SW(\Pi)$ or $\SW(\Pi')$. The local structure is preserved.
        \item If $A, D, E \in \Reg(\Pi)$ and $C\notin  \Reg(\Pi')$, then we have $B,C\notin \Reg(\Pi')$ and also $B,C\notin \Reg(\Pi)$. Segments $\ell_1, \ell_2, \ell_3$ do not change color in either $\SW(\Pi)$ or $\SW(\Pi')$.
         \item If $A, D, E \in \Reg(\Pi)$ and $C\in  \Reg(\Pi')$, then we have $B\notin \Reg(\Pi')$ and  $B,C\in \Reg(\Pi)$. Segments $\ell_1, \ell_2, \ell_3$ all change color in $\SW(\Pi)$ while only $\ell_3$ changes color in $\SW(\Pi')$.
    \end{itemize}
    In all sub-cases, the amount that the number of positive crossings varies from $\Pi$ to $\Pi'$ is identical to that of $\SW(\Pi)$ to $\SW(\Pi')$.
\end{proof}

    \begin{center}
\begin{figure}
\begin{tikzpicture}[>=stealth, line width=1pt, scale=1]
    
    \tikzset{
        black curve/.style={black, thick, line cap=round},
        red curve/.style={red, very thick, line cap=round},
        dashed line/.style={black, dashed, thin}
    }

    % --- LEFT DIAGRAM (PI) ---
    % MODIFIED: l3 is now tangent to l2
    \begin{scope}[xshift=0cm]
        % Region labels
        \node at (-1.2, 1) {\Large $A$};
        \node at (0, 1) {\Large $B$};
        \node at (2.0, 1.0) {\Large $C$};
        \node at (0.8, 2.3) {$D$}; % Moved label D
        \node at (0.7, -0.3) {$E$}; % Moved label E

        % The Bigon (Lens shape)
        % Left side (black) l1
        \draw[black curve] (0, 0) to[bend left=60] node[pos=0.8, left] {$\ell_1$} (0, 2);
        
        % Right side (red) l2
        % We define a coordinate 'tangent_point' on this curve for l3 to touch.
        \draw[red curve] (0, 0) to[bend right=60] coordinate[pos=0.5] (tangent_point)  (0, 2);
        \node at (0.5, 1.9) {\textcolor{red}{$\ell_2$}};
         %\node[pos=0.8, right, black] {$\ell_2$}
        % The Tangent curve l3
        % It comes from the bottom right, touches 'tangent_point', and goes to top right.
        \draw[black curve] (1.2, -0.5) 
            to[out=110, in=-85] (tangent_point) 
            to[out=95, in=-110] node[pos=0.6, right] {$\ell_3$} (1.4, 2.5);

        % Dashed whiskers
        \draw[dashed line] (0, 2) -- (-0.5, 2.6);
        \draw[dashed line] (0, 2) -- (0.5, 2.6);
        \draw[dashed line] (0, 0) -- (-0.5, -0.6);
        \draw[dashed line] (0, 0) -- (0.5, -0.6);
        \node at (0.7, 1) {$v$};
        % Pi Label
        \node at (0, -1.5) {$\Pi$};
    \end{scope}

    % --- RIGHT DIAGRAM (PI PRIME) ---
    \begin{scope}[xshift=6cm]
        % Region labels
        \node at (-1, 1) {\Large $A$};
        \node at (0, 1) {\Large $B$};
        \node at (2.2, 1.2) {\Large $C$};
        \node at (1, 1.2) {\Large $D'$};

        % The Bigon (Lens shape)
        % Left side (black) l1
        \draw[black curve] (0, 0) to[bend left=60] node[pos=0.8, left] {$\ell_1$} (0, 2);
        \node at (0.5, 1.9) {\textcolor{red}{$\ell_2$}};
        % Right side (red) l2
        \draw[red curve] (0, 0) to[bend right=60]  (0, 2);

        % The separated line l3
        \draw[black curve] (1.8, -0.2) to[bend left=30] node[pos=0.8, right] {$\ell_3$} (1.6, 2.3);

        % Dashed whiskers
        \draw[dashed line] (0, 2) -- (-0.5, 2.6);
        \draw[dashed line] (0, 2) -- (0.5, 2.6);
        \draw[dashed line] (0, 0) -- (-0.5, -0.6);
        \draw[dashed line] (0, 0) -- (0.5, -0.6);
        
        % Pi Prime Label
        \node at (0, -1.5) {$\Pi'$};
    \end{scope}
\end{tikzpicture}\caption{Local configurations for operation (II).}\label{fig: operation 2 example}
\end{figure}
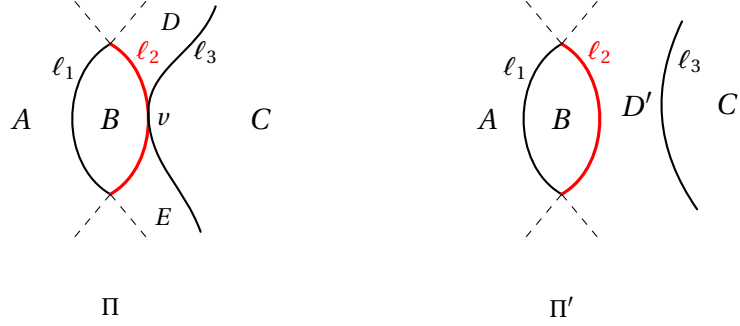
\end{center}

  Since operations (I) and (II) strictly decrease the number of crossings, they cannot be applied infinitely many times to a given wiggle diagram $\Pi$. Thus, any sequence of these operations must eventually terminate. We provide a classification of the resulting \emph{reduced wiggle diagrams}: those admitting no further applications of operation (I) or (II).

\begin{proposition}\label{prop: wiggle classification}
    Let $\Pi$ be a reduced $(a,b)$-wiggle diagram. Then there exist a subset $A$ of the black strands and a subset $B$ of the red strands, together with a bijection $\phi: A \to B$, such that:
    \begin{itemize}
        \item Every black strand $\ell \in A$ intersects the corresponding red strand $\phi(\ell)$ exactly once in the principal band, and intersects no other red strands.
        \item Black strands not in $A$ and red strands not in $B$ do not participate in any crossings.
    \end{itemize}
\end{proposition}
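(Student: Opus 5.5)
Fix a reduced $(a,b)$-wiggle diagram $\Pi$. The plan is to show that any black strand $\ell$ and red strand $r$ which intersect at all intersect exactly once per period. We then show that each strand meets at most one strand of the other color, which yields the bijection $\phi$.

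\textbf{Step 1: two strands bounding several bigons in a period.} Suppose $\ell$ and $r$ meet in at least two points per period; we may take two consecutive intersection points $v_1,v_2$ along $\ell$ with $y(v_2)-y(v_1)<1$. The segments of $\ell$ and $r$ between $v_1$ and $v_2$ bound a bigon, and its height is less than $1$. As the paper already observes, every bigon contains a simple region. We pick a bigon $Q$ that is minimal under inclusion among bigons of height $<1$ (there are finitely many per period), and a simple region $S\subseteq Q$ with boundary segments $\sigma_b\subset$ black and $\sigma_r\subset$ red. The region $S$ is proper, since its height is less than $1$. Because strands of the same color do not cross, any other strand meeting $\partial S$ must touch $\sigma_b$ or $\sigma_r$ from outside $S$ without entering, or else enter $Q$. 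A strand entering $Q$ must cross $\partial Q$ at least twice: a black one crosses the red side, a red one crosses the black side. This creates a smaller bigon of height $<1$ inside $Q$, contradicting minimality. Hence the only interactions with $\partial S$ are touching points from outside. If there are none, operation (I) applies. If there is exactly one, operation (II) applies. Either case contradicts reducedness.

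\textbf{Step 2: several touchings (the main obstacle).} If two or more strands touch $\partial S$, or one strand touches it in several points, we must locate another simple region, adjacent to $S$ and still proper, with at most one touching. The approach is to take an outermost touching strand: a strand $s$ touching $\sigma_r$ at points $w_1,\dots,w_p$. Between consecutive touching points, $s$ and $\sigma_r$ bound a smaller bigon of height $<1$ lying outside $S$. We then restart the minimality argument from Step 1 on the family of all bigons of height $<1$, choosing one minimal under inclusion together with the lowest possible number of crossings on its boundary. A ``touching'' is a crossing pair that enters and immediately exits. So whenever a chosen minimal bigon carries two or more touchings, one of those touching strands bounds a strictly smaller bigon. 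This contradicts minimality unless at most one touching remains, and that case again yields operation (I) or (II). Making this descent precise, and ensuring the region found is proper rather than full, is the delicate point. The heights strictly decrease below $1$, so properness is preserved.

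\textbf{Step 3: conclusion.} After Steps 1 and 2, any two strands of different colors meet at most once per period. Suppose a black strand $\ell$ meets two distinct red strands $r_1,r_2$. By the $y$-monotonicity and periodicity, $r_1$ and $r_2$ are non-crossing red strands that both cross $\ell$. Consider the region between $r_1$ and $r_2$ cut by $\ell$. Following $\ell$ over one period, it crosses from one side of $r_1$ to the other, and periodicity forces it to return to that side. This requires a second intersection with $r_1$ within a period, contradicting Step 1. Therefore each strand crosses at most one strand of the other color, and crosses it exactly once. Setting $A$ to be the black strands involved in crossings and $\phi(\ell)$ to be the unique red strand that $\ell$ meets gives the bijection. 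The remaining strands have no crossings by construction.
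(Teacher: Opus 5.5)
Your Step~1 is sound; inclusion-minimality in fact forces $S=Q$. The whole difficulty is in Step~2, and the descent you propose there does not work. A strand meeting $\partial S$ touches it from outside, so any bigon it cuts out together with $\sigma_b$ or $\sigma_r$ lies outside $S$ and is never smaller under inclusion. In the basic obstructed case (one red strand touching $\sigma_b$ once, one black strand touching $\sigma_r$ once) no new bigon arises at all, yet neither (I) nor (II) applies to $S$. So the claim that ``one of those touching strands bounds a strictly smaller bigon'' is false, and the tie-breaker by number of boundary crossings is never shown to decrease. Minimizing height instead of inclusion does handle a single strand touching several times, and one checks that two strands of the same color cannot touch the same side of a simple region. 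The mixed case above still survives. What is missing is an argument that leaves $S$ and finds a \emph{different} region where an operation applies. The paper does this in Lemmas~\ref{y} and~\ref{z} with an extremal choice. It takes the leftmost simple region touched by another strand and follows a red strand touching its black side until that strand next meets the same black strand or the black strand immediately to its left. The resulting bigon, together with the turnaround argument on the sequence of black strands met in Lemma~\ref{y}, produces a region where (I) or (II) applies. Leftmostness is what guarantees that nothing else touches the new region.

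Step~3 is also incorrect. The horizontal offset between $\ell$ and $r_1$ is $1$-periodic in $y$, so side-switching crossings come in pairs within each period. Hence once $\ell$ and $r_1$ meet exactly once per period, that meeting is a tangency, and $\ell$ never passes to the other side of $r_1$. Your argument never uses $r_2$. If it were valid, it would forbid every pair meeting once per period, including the pairs $(\ell,\phi(\ell))$ that the proposition asserts exist (they bound full simple regions). For a concrete case, take one black strand $\ell$ and two red strands $r_1\le \ell\le r_2$, with $\ell$ tangent to each once per period. This is a $(1,2)$-wiggle diagram with no bigon of height $<1$, so Steps~1--2 say nothing about it, yet it violates the conclusion. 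It is excluded only because operation (II) applies to the \emph{full} simple region between $r_1$ and $\ell$, which $r_2$ touches at a single point. Your proof only ever uses proper regions, so it cannot rule out a strand meeting two strands of the other color. You need a statement that also covers full simple regions, such as the paper's Lemma~\ref{z}: if some simple region is touched by another strand, then an operation applies somewhere. This is exactly what the paper invokes for the final step.
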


For a reduced wiggle diagram $\Pi$ as in Proposition~\ref{prop: wiggle classification}, each pair $\ell \in A$ and $\phi(\ell)\in B$ forms a full simple region (see, for example, how the region $R_1$ is formed in Figure~\ref{fig: wiggle example}). Such a full simple region is isolated in $G(\Pi)$ and hence does not belong to $\Reg(\Pi)$. All other regions are unbounded, and therefore belong to $\Reg(\Pi)$. Consequently, $\SW(\Pi)$ is obtained by preserving the colors of the strands corresponding to elements of $A$ or $B$, and changing the colors of all other strands. It is immediate that $\SW(\Pi)$ is a $(b,a)$-wiggle diagram, and in this case positive crossings (respectively, negative crossings) are sent to positive crossings (respectively, negative crossings). We conclude that Proposition~\ref{prop: sw wiggle diagram} holds for reduced wiggle diagrams.
 
Although the statement of Proposition~\ref{prop: wiggle classification} may appear intuitively plausible at first glance, its rigorous proof requires some technical care (see Lemmas~\ref{y} and~\ref{z}). While Lemma~\ref{z} is apparently stronger than Lemma~\ref{y}, it is necessary to establish Lemma~\ref{y} first in order to prove Lemma~\ref{z}.

\begin{lem}\label{y}
    For a wiggle diagram $\Pi$, if there exists a simple region such that exactly one strand (other than those forming the boundary) touches its boundary, then operation (I) or (II) can be applied somewhere in $\Pi$.
\end{lem}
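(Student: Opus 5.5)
The plan is a minimality argument. Among all simple regions whose boundary is touched by exactly one other strand, we choose one that is smallest in a suitable sense, and show that either operation (II) applies to it directly or we can find a smaller configuration where operation (I) or (II) applies.

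Let $R$ be such a simple region, with boundary black segment $\ell_1$ and boundary red segment $\ell_2$. Let $\ell_3$ be the unique other strand touching $\partial R$. Since strands of the same color do not intersect, $\ell_3$ can only meet the boundary segment of the opposite color. Say $\ell_3$ is black; then it meets only $\ell_2$. The intersection $\ell_3\cap\partial R$ is a finite set of crossings. There are two cases.

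\emph{Case 1: $\ell_3$ meets $\partial R$ in exactly one point.} Then operation (II) applies to $R$ by definition, and we are done.

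\emph{Case 2: $\ell_3$ meets $\ell_2$ in at least two points.} We may assume no other strand enters $R$, since $R$ is a region. Take two consecutive intersection points $v_1,v_2$ of $\ell_3$ with $\ell_2$, ordered along $\ell_2$ (equivalently by height, using $y$-monotonicity). The subsegments of $\ell_2$ and $\ell_3$ between $v_1$ and $v_2$ bound a bigon $R'$ lying on the side of $\ell_2$ opposite to $R$. Because $R$ is a region, $\ell_3$ cannot enter $R$, so $R'$ is disjoint from $R$, and the segment of $\ell_2$ from $v_1$ to $v_2$ has no other strand touching it from the $R$ side.

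The paper notes that every bigon contains a simple region. We now descend inside $R'$ and pick a bigon inside $R'$ that is minimal with respect to inclusion, or equivalently has the fewest crossings in its interior and on its boundary. Its interior contains a simple region $S$. We then argue that one of the following holds for $S$: no strand other than its two boundary strands touches $\partial S$, in which case operation (I) applies (provided $S$ is proper); or exactly one strand touches $\partial S$ at exactly one point, in which case operation (II) applies. If neither holds, the touching strands produce a strictly smaller bigon inside $S$'s enclosing bigon, contradicting minimality. Properness of $S$ comes from the fact that $R'$ has height strictly less than $1$: the segment $\ell_2$ between consecutive points is shorter than a full period, because $\ell_2$ already bounds $R$ whose vertical extent is at most $1$, and the periodic copies of $R$ are disjoint.

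The main obstacle is this descent step. In general many strands may cross $R'$, so we cannot simply take a simple region inside $R'$ and expect at most one strand to touch it. The plan is to use a well-founded induction on the number of crossings in the closed bigon. Given any bigon $\beta$, either the simple region $S$ adjacent to a chosen boundary corner already satisfies the hypothesis of (I) or (II), or some strand touching $\partial S$ meets a boundary segment of $S$ twice or enters and leaves. Either of these produces a bigon $\beta'\subsetneq\beta$ with strictly fewer crossings. Periodicity is handled by working inside one fundamental domain: since bigons here have height less than $1$, they do not overlap their translates, so the operation can be performed periodically as Definition~\ref{def: operation} requires.
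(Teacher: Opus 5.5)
There is a genuine gap. Your Case 1 and the construction of $R'$ on the far side of $\ell_2$ are fine, but the descent step fails.

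\textbf{Why minimality gives no contradiction.} A strand touching the boundary of a simple region $S$ lies outside $S$; it cannot ``enter and leave'' a region. So if it touches $\partial S$ twice, the bigon it cuts off lies outside $S$, not inside the minimal bigon you chose. In your setting this is decisive:
\begin{itemize}
\item Black strands cannot enter $R'$. They cannot cross $\ell_3$, so they would have to cross the $\ell_2$-side, i.e.\ come out of $R$. Hence only red strands enter $R'$, through $\ell_3$, and a minimal bigon $S\subseteq R'$ has its black side on $\ell_3$.
\item Every strand touching $S$ therefore approaches $\ell_3$ from outside $R'$. If it touches $S$ twice, the new bigon lies outside $R'$ altogether.
\end{itemize}
Neither inclusion-minimality inside $R'$ nor a crossing count inside a fixed bigon $\beta$ says anything about such a bigon. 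Your dichotomy also leaves open the case of two different strands each touching $\partial S$ once. There no new bigon appears, and neither (I) nor (II) applies to $S$.

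\textbf{What is missing} is a global extremal choice or a termination argument. The paper supplies it as follows:
\begin{itemize}
\item It takes $R$ leftmost among simple regions whose left (black) side is touched by a red strand.
\item It traces that strand between two touching points across the black strands $\ell'_1,\ell'_2,\dots$.
\item A turnaround in the index sequence locates a bigon to which (I) or (II) applies, and the leftmost choice excludes extra touchers.
\end{itemize}

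\textbf{Repairing your route.} Strands of one colour keep a fixed left-to-right order. Two red strands touching the black side of a simple region would therefore force one of them through its interior. Hence your $S$ has at most one toucher, and it is necessarily red.
\begin{itemize}
\item If nothing touches $S$, apply (I); $S$ is proper because $R'$ has height less than $1$.
\item If the toucher meets $S$ exactly once, apply (II).
\item If it meets $S$ at least twice, repeat your construction on the far side of $\ell_3$.
\end{itemize}
The successive touching strands alternate in colour and move strictly in one horizontal direction within the order of their colour. So the process stops after at most $a+b$ rounds.
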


\begin{proof}
     Assume there exists such a simple region. Without loss of generality, we take a simple region $R$ which is the leftmost (relative to the ordering of black strands) among simple regions with the property that the black boundary segment is on the left and a red strand is touching the black boundary segment. Let $\ell$ be the red strand touching the black boundary segment.

    Case 1): If $\ell$ touches the boundary black segment of $R$ only once, then we can apply operation (II) to $R$.

    Case 2): Assume $\ell$ touches the boundary black segment of $R$ at least twice. Let $v$ and $v'$ be two distinct touching points. Let $\ell'_1$ be a black strand that forms the boundary black segment of $R$ and we enumerate the black strands to the left of $\ell'_1$ as $\ell'_2, \ell'_3, \dots$ (from right to left). Trace the path of $\ell$ from $v$ to $v'$. Let $w_1, w_2, \dots, w_p$ be the sequence of points where $\ell$ intersects any black strand $\{\ell'_k\}$. Let $a_j$ be the index of the black strand containing $w_j$, i.e., $w_j \in \ell \cap \ell'_{a_j}$.
    Note that $w_1 = v \in \ell'_1$ (so $a_1=1$) and $w_p = v' \in \ell'_1$ (so $a_p=1$). Since strands cannot jump over adjacent regions without crossing the boundaries, we have $|a_j - a_{j+1}| \le 1$. Since the sequence $a_1,\dots,a_p$ starts and ends at $1$, there must exist a turnaround index $j$. This leads to two subcases:

    Subcase 2a): There exists an index $j$ such that $a_{j+1} = a_j + 1$ and $a_{j+2} = a_j$.
    Let $k = a_j$. In this scenario, the strand $\ell$ leaves $\ell'_k$ at $w_j$, touches $\ell'_{k+1}$ at $w_{j+1}$, and returns to $\ell'_k$ at $w_{j+2}$. Consider the bigon bounded by two segments connecting $w_j$ and $w_{j+2}$. Then it must be a region and we can apply operation (II). See the left of Figure~\ref{fig: regions classify} for example.

    Subcase 2b): There exists an index $j$ such that $a_j = a_{j+1}$. Let $k = a_j$. Here, $\ell$ intersects $\ell'_k$ at two consecutive points $w_j$ and $w_{j+1}$ without intersecting any other black strand in between. Consider the bigon $R'$ bounded by two segments connecting $w_{j}$ and $w_{j+1}$.
    \begin{itemize}
        \item If the red boundary segment of $R'$ is on the left of $R'$, then $R'$ must be a region and we can apply operation (I). See the middle of Figure~\ref{fig: regions classify} for example.
        \item  If the red boundary segment of $R'$ is on the right of $R'$ then $R'$ itself may not be a region. In that case we take a simple region $R''$ inside $R'$. If $R'$ itself is a simple region, then simply let $R''=R'$. If there exists a red strand (other than those forming boundary of $R''$) that touches the boundary of $R''$ then it violates the fact that we initially chose $R$ to be the leftmost such simple region. Therefore we can apply operation (I) to $R''$. See the right of Figure~\ref{fig: regions classify} for example.
    \end{itemize}

    \begin{figure}
\begin{tikzpicture}[>=stealth, line width=1pt, scale=1]

    % --- STYLE DEFINITIONS ---
     \tikzset{
        black curve/.style={black, thick, line cap=round},
        red curve/.style={red, very thick, line cap=round},
        dashed line/.style={black, dashed, thin}
    }
    \tikzset{
        black line/.style={black, thick}, 
        black curve/.style={black, thick, line cap=round},
        red curve/.style={red, thick, line cap=round},
        dot/.style={fill=red, circle, inner sep=1.5pt}
    }

    % --- MACRO FOR BACKGROUND (STRAIGHT LINES) ---
    \newcommand{\drawbackground}[1]{
        \begin{scope}[xshift=#1cm]
            % Straight Vertical Lines l3' and l2'
            % l3' (Leftmost)
            \draw[black line] (-1.5, -0.5) -- (-1.5, 2.5);
            \node[below] at (-1.5, -0.5) {$\ell_3'$};
            
            % l2' (Middle)
            \draw[black line] (-0.6, -0.5) -- (-0.6, 2.5);
            \node[below] at (-0.6, -0.5) {$\ell_2'$};
            
            % l1' (Boundary of R) - Slightly curved
            \draw[black curve] (0.55, 0) to[bend left=20] (0.55, 2);
            \node[below] at (0.2, 1.2) {$\ell_1'$};
            
            % Region R (Closed loop implication)
            \draw[red curve] (0.55, 0) to[bend right=60] (0.55, 2);
            \node at (0.8, 1) {\Large $R$};
            \draw[dashed line] (0.55, 2) -- (0.55-0.4, 2+0.4);
            \draw[dashed line] (0.55, 2) -- (0.55+0.4, 2+0.4);
            \draw[dashed line] (0.55, 0) -- (0.55-0.4, 0-0.4);
            \draw[dashed line] (0.55, 0) -- (0.55+0.4, 0-0.4);
        \end{scope}
    }

    % --- 1. LEFT PANEL: Loop touches l3' tangentially ---
    \drawbackground{0}
    \begin{scope}[xshift=0cm]
        % Tangency points on R boundary
        \coordinate (v) at (0.42, 1.6);
        \coordinate (v_p) at (0.42, 0.4);
        
        % Incoming Red Line (Grazes v from top)
        \draw[red curve] (-0.3, 2.8) node[above, red] {$\ell$} 
            to[out=-90, in=90] (v);
            
        % The Loop: 
        % 1. Leaves v tangentially (downwards/leftwards)
        % 2. Touches l3' (x=-1.5) tangentially at the apex
        % 3. Returns to v' tangentially
        \draw[red curve] (v) 
            .. controls (0.3, 1.4) and (-1.5, 1.6) .. (-1.5, 1.0) % Out to touch l3'
            .. controls (-1.5, 0.4) and (0.3, 0.6) .. (v_p);      % Back to v'

        % Outgoing Red Line (Grazes v' going down)
        \draw[red curve] (v_p) 
            to[out=-90, in=90] (-0.3, -0.8);
            
        % Dots
        \node[dot] at (v) {}; \node[right, red, font=\footnotesize] at (v) {$v$};
        \node[dot] at (v_p) {}; \node[right, red, font=\footnotesize] at (v_p) {$v'$};
    \end{scope}

    % --- 2. MIDDLE PANEL: Loop transverses l2' ---
    \drawbackground{5}
    \begin{scope}[xshift=5cm]
        \coordinate (v) at (0.42, 1.6);
        \coordinate (v_p) at (0.42, 0.4);

        % Incoming
        \draw[red curve] (-0.3, 2.8) node[above, red] {$\ell$} to[out=-90, in=90] (v);

        % The Loop:
        % Crosses l2' (x=-0.6). Goes slightly deeper (e.g. -1.0) then turns back.
        \draw[red curve] (v) 
            .. controls (0.3, 1.4) and (-1.0, 1.5) .. (-1.0, 1.0) % Crosses l2'
            .. controls (-1.0, 0.5) and (0.3, 0.6) .. (v_p);      % Returns

        % Outgoing
        \draw[red curve] (v_p) to[out=-90, in=90] (-0.3, -0.8);

        % Dots
        \node[dot] at (v) {}; \node[right, red, font=\footnotesize] at (v) {$v$};
        \node[dot] at (v_p) {}; \node[right, red, font=\footnotesize] at (v_p) {$v'$};
    \end{scope}

    % --- 3. RIGHT PANEL: Loop touches l2' twice ---
    \drawbackground{10}
    \begin{scope}[xshift=10cm]
        \coordinate (v) at (0.42, 1.6);
        \coordinate (v_p) at (0.42, 0.4);

        % Incoming
        \draw[red curve] (-0.3, 2.8) node[above, red] {$\ell$} to[out=-90, in=90] (v);

        % The Loop: Double Tangency on l2' (x=-0.6)
        % Leaves v, touches l2' (top), retreats right, touches l2' (bottom), returns to v'
        \draw[red curve] (v) 
            to[out=-100, in=90] (-0.6, 1.3)   % First touch on l2'
            to[out=-90, in=90] (-0.4, 1.0)    % Retreat towards R
            to[out=-90, in=90] (-0.6, 0.7)    % Second touch on l2'
            to[out=-90, in=80] (v_p);         % Return to v'

        % Outgoing
        \draw[red curve] (v_p) to[out=-100, in=90] (-0.3, -0.8);

        % Dots
        \node[dot] at (v) {}; \node[right, red, font=\footnotesize] at (v) {$v$};
        \node[dot] at (v_p) {}; \node[right, red, font=\footnotesize] at (v_p) {$v'$};
    \end{scope}

\end{tikzpicture}\caption{Subcases for Case 2.}\label{fig: regions classify}
\end{figure}
\end{proof}

\begin{lem}\label{z}
    For a wiggle diagram $\Pi$, if there exists a simple region such that at least one strand (other than those forming the boundary) intersects its boundary, then operation (I) or (II) can be applied somewhere in $\Pi$.
\end{lem}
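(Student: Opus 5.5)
We reduce the statement to Lemma~\ref{y}, using the same leftmost-region trick as in its proof. Suppose some simple region has its boundary met by at least one strand other than its two boundary strands. Among all such simple regions, choose $R$ minimal in a suitable sense. The cleanest choice is a simple region $R$, with boundary segments $\ell_1$ (black, say on the left) and $\ell_2$ (red), such that the set of foreign strands meeting $\partial R$ is nonempty but has the fewest possible elements among all simple regions. If exactly one foreign strand meets $\partial R$, Lemma~\ref{y} applies directly and we are done. So assume at least two distinct foreign strands meet $\partial R$. Since $R$ is a region, no strand enters its interior, so every foreign strand meets $\partial R$ only at touching points from the outside.

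\textbf{Step 1.} Fix a foreign strand $\ell$ touching, say, the black segment $\ell_1$ from the left; such an $\ell$ is red. Pick two consecutive touching points of $\ell$ on $\ell_1$, or a single touching point if there is only one. Follow $\ell$ away from $\ell_1$, exactly as in Case 2 of the proof of Lemma~\ref{y}. Index the black strands to the left of $\ell_1$ as $\ell'_1=\ell_1,\ell'_2,\dots$, and record the sequence $a_j$ of indices of the black strands that $\ell$ meets.

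\textbf{Step 2.} Locate a new simple region. Any turnaround of the sequence $a_j$ (either $a_{j+1}=a_j\pm1$ followed by a return, or $a_j=a_{j+1}$) produces a bigon between $\ell$ and some $\ell'_k$ lying strictly to the left of $R$. Inside that bigon we take a simple region $R'$. If instead $\ell$ touches $\ell_1$ only once and wanders off without returning, we use the region of $G(\Pi)$ adjacent to that touching point, as in Case 2 of Lemma~\ref{operation valid}. Either way, the foreign strands touching $\partial R'$ form a subset of those that can reach the smaller bigon. We then argue that this set is strictly smaller than the set for $R$, because the other foreign strand touching $\partial R$ on the opposite side, or at a different place, is separated from $R'$ by $\ell$. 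By minimality, $R'$ then has either no foreign strands touching it or exactly one. In the second case we apply Lemma~\ref{y}. In the first case, $R'$ is proper (a bigon cut out by a turnaround spans height less than $1$), so operation (I) applies.

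\textbf{Main obstacle.} The hard part is justifying the strict decrease in Step 2. We must show that a strand touching $\partial R'$ must also have touched $\partial R$, and that some strand touching $\partial R$ cannot touch $\partial R'$. This rests on a planarity and $y$-monotonicity argument: a strand entering the bigon bounded by $\ell$ and $\ell'_k$ is trapped between $\ell$ and $\ell'_k$, and it also lies between $\ell'_k$ and $\ell_1$, which it cannot cross because it has the same colour. We also have to handle the periodicity issue, namely that $R'$ might be full rather than proper. If the leftmost-choice ordering causes trouble, the first fallback is to order simple regions by area or by the number of crossings enclosed, which makes the inductive decrease automatic.
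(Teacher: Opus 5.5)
There is a genuine gap, and it is exactly the step you flag as the main obstacle. Choosing $R$ to minimise the number of foreign strands does not give a descent. The simple region $R'$ you extract from a turnaround of $\ell$ lies to the left of $\ell'_1$, i.e.\ outside $R$. Its boundary can therefore be touched by strands that never meet $\partial R$.

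For example, in the turnaround $a_{j+1}=a_j+1$, $a_{j+2}=a_j$, the black strand $\ell'_{a_j+1}$ touches the bigon. Since it lies strictly left of $\ell'_1$, it cannot touch $\partial R$. Likewise, red strands touching the black side of $R'$ from outside need have nothing to do with $R$.

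So the inclusion ``foreign for $R'$ $\Rightarrow$ foreign for $R$'' is false, and nothing forces the count for $R'$ below the count for $R$. $R'$ may again carry two foreign strands, one black strand further to the left. Your fallback orderings do not help either: $R'$ is not contained in $R$, and $R$, being a region, encloses no crossings at all. Note also that the Lemma~\ref{y}-style tracking you borrow already relies on a leftmost choice in its subcase where the red boundary segment lies on the right.

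The paper's proof uses two ingredients that your proposal lacks.

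\textbf{Leftmost choice.} Take $R$ leftmost, in the finite left-to-right order of black strands, among simple regions whose boundary is met by a foreign strand. Then any simple region found strictly to the left of $\ell'_1$ has no foreign strand on its boundary, which is what operation (I) needs.

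\textbf{Choice of bigon.} By periodicity, $v\pm(0,1)$ lie on both $\ell$ and $\ell'_1$. So walking along $\ell$ up and down from $v$, one first meets either $\ell'_1$ or the adjacent black strand $\ell'_2$, at points $w$ and $w'$.
\begin{itemize}
\item If both $w$ and $w'$ lie on $\ell'_2$, a simple region inside the bigon between $\ell$ and $\ell'_2$ lies left of $R$. Operation (I) applies to it by leftmostness.
\item If, say, $w\in\ell'_1$, the bigon between $v$ and $w$ sits in the strip between the adjacent black strands $\ell'_2$ and $\ell'_1$. No foreign black strand can touch a simple region inside it, so the argument of Lemma~\ref{y} applies.
\end{itemize}

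This also shows that your case ``$\ell$ touches $\ell_1$ only once and wanders off without returning'' never occurs, since periodicity forces $\ell$ back to the strand $\ell'_1$ within one period. In any event, taking ``the region of $G(\Pi)$ adjacent to the touching point'' would not produce a simple region.
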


\begin{proof}
    Let $R$ be the leftmost simple region with this property. Without loss of generality, assume the left boundary of $R$ is a black segment. Let $\ell$ be a red strand (other than the one forming the boundary) that intersects the boundary black segment of $R$. If such a red strand does not exist, then we are done with Lemma~\ref{y}.

    Let $\ell'_1$ be the black strand forming the left boundary of $R$, and let $\ell'_2$ be the black strand immediately to the left of $\ell'_1$. Let $v$ be a point where $\ell$ intersects boundary black segment of $R$.
    Starting from $v$, we traverse $\ell$ upwards (respectively downwards) to find the first point $w$ (respectively $w'$) where $\ell$ intersects either $\ell'_1$ or $\ell'_2$. Such points must exist because the points $v+(0,1)$ and $v-(0,1)$ lie on $\ell$ and $\ell'_1$ due to the periodicity of the diagram.

    Case 1): Both $w$ and $w'$ lie on $\ell'_2$.
    Consider the bigon bounded by two segments connecting $w$ and $w'$ and let $R'$ be a simple region inside it. Then strands other than those forming boundary of $R'$ cannot intersect the boundary of $R$, otherwise, it will violate that we chose $R$ to be the leftmost among such simple regions. We can apply operation (I) to $R'$.

    Case 2): At least one point (say $w$) lies on $\ell'_1$.
    Consider the bigon bounded by two segments connecting $v$ and $w$ and let $R''$ be a simple region inside it. Note that a black strand (other than that forming the boundary of $R''$) cannot intersect the boundary of $R''$. By Lemma~\ref{y}, we can find somewhere to apply operation (I) or (II).
\end{proof}

\begin{proof}[Proof of Proposition~\ref{prop: wiggle classification}]
    Pick any red strand $\ell$. If $\ell$ intersects some black strand $\ell'$ twice (in a principal band), then there must be a simple region where we can apply operation (I) otherwise, by Lemma~\ref{z} it is a contradiction. If $\ell$ intersects two distinct black strands then due to the periodicity, $\ell$ intersects some black strand twice (in a principal band). We conclude that each red strand may intersect only one black strand only once in a principal band. Moreover it is not possible that two red strands intersect the same black strand due to Lemma~\ref{z}.
\end{proof}

\begin{rmk}\label{remark: relax second}
    We may relax the third condition for a wiggle diagram by allowing two strands to intersect at finitely many intervals in the principal band. We simply regard those intervals as points, therefore crossings, and apply the map $\SW$ in the same way. For example, on the left of Figure~\ref{fig: relax wiggle}, $\Pi$ is a (2,1)-wiggle diagram with a relaxed third condition; the red strand meets the left black strand at two intervals $I_1$ and $I_2$. As we regard those intervals as points, $I_2$ is a positive crossing and $I_1$ is a negative crossing. Also in the graph $G(\Pi)$, $I_1$ and $I_2$ induces edges connecting $R$ and $R'$. Region $R''$ is the only region that is not regular in the principal band. Corresponding $\SW(\Pi)$ is shown on the right.
\end{rmk}    
\begin{figure}
\begin{tikzpicture}[scale=0.7, >=stealth]

    % --- Definitions for styles ---
    % dashed blue lines for y=0, y=1
    \tikzstyle{guideline} = [dash pattern=on 3pt off 3pt, blue!80, thick]
    % Main vertical walls
    \tikzstyle{wall} = [thick, black, line cap=round]
    % The highlighted 'flow' curves
    \tikzstyle{highlight} = [thick, red!90!black, line cap=round]
    % The secondary 'flow' curves
    \tikzstyle{regular} = [thick, black, line cap=round]
    
    % --- Diagram 1: Pi (Left) ---
    \begin{scope}[local bounding box=LeftDiagram]
        
        % 1. Horizontal Guidelines (y=0 and y=1)
        % Extending slightly beyond the walls
        \draw[guideline] (-1, 0) -- (3.5, 0) node[left] at (-1, 0) {\large $\color{blue} y=0$};
        \draw[guideline] (-1, 4) -- (3.5, 4) node[left] at (-1, 4) {\large $\color{blue} y=1$};

        % 2. Vertical Walls (Base Structure)
        \draw[wall] (0, 0) -- (0, 4); % Left wall
        \draw[wall] (2.5, 0) -- (2.5, 4); % Right wall

        % 3. The Red Path (The main feature of Pi)
        % Logic: Bottom curve -> Wall segment -> Bump -> Wall segment -> Cross curve -> Top return
        
        % Curve from bottom (approx x=2) to left wall
        \draw[highlight] (2.0, 0) to[out=150, in=-80] (-0.03, 1.2);
        
        % Segment I2 on the wall
        \draw[highlight] (-0.03, 1.2) -- (-0.03, 1.8);
        
        % The Bump (Semicircle to the left)
        \draw[highlight] (-0.03, 1.8) arc (270:90:0.5); 
        % Note: Center is roughly (-0.5, 2.3), radius 0.5. 
        % Starts at (0,1.8) (angle -90 relative to center) goes to (0,2.8)
        
        % Segment I1 on the wall
        \draw[highlight] (-0.03, 2.8) -- (-0.03, 3.2);
        
        % Curve crossing to the right wall
        \draw[highlight] (-0.03, 3.2) to[out=10, in=180] (2.5, 3.5);
        
        % Top curve returning from y=1 (approx x=2) to meet the cross
        \draw[highlight] (2.5, 3.5) to[out=90, in=-90] (2.0, 4);

        % 4. Labels
        \node at (-1.2, 2.3) { $R$};
        \node at (1.25, 2.3) { $R'$};
        \node at (-0.2, 2.3) { $R''$};
        \node[left, xshift=-2pt] at (0, 3.0) {\small $I_1$};
        \node[left, xshift=-2pt] at (0, 1.5) {\small $I_2$};
        \node at (1.25, -0.8) { $\Pi$};
        
    \end{scope}

    % --- Diagram 2: SW(Pi) (Right) ---
    % Shifted to the right by 6 units
    \begin{scope}[shift={(6,0)}]
        
        % 1. Horizontal Guidelines
        \draw[guideline] (-1, 0) -- (3.5, 0) node[left] at (-1, 0) {};
        \draw[guideline] (-1, 4) -- (3.5, 4) node[left] at (-1, 4) {};

        % 2. Vertical Walls (Base Structure)
        % Left wall (Top and Bottom parts are black)
        \draw[wall,red] (0, 0) -- (0, 1.2);
        \draw[wall,red] (0, 3.2) -- (0, 4);
        \draw[wall] (0.03,1.2) -- (0.03, 3.2);
        % Right wall is fully RED in SW(Pi)
        \draw[highlight] (2.5, 0) -- (2.5, 4);

        % 3. The Highlighted (Red) Parts on Left Wall
        % (Inverse of the flow lines in the first diagram)
        \draw[highlight] (0, 1.2) -- (0, 1.8); % I2 position
        \draw[highlight] (0, 1.8) arc (270:90:0.5); % The Bump
        \draw[highlight] (0, 2.8) -- (0, 3.2); % I1 position

        % 4. The Flow Curves (Now Black)
        % Curve from bottom
        \draw[regular] (2.0, 0) to[out=150, in=-80] (0.03, 1.2);
        
        % Curve crossing to the right wall
        \draw[regular] (0.03, 3.2) to[out=10, in=180] (2.5, 3.5);
        
        % Top curve
        \draw[regular] (2.5, 3.5) to[out=90, in=-90] (2.0, 4);

        % 5. Label
        \node at (1.25, -0.8) { $\SW(\Pi)$};

    \end{scope}

\end{tikzpicture}
\caption{Wiggle diagram with a relaxed condition.} \label{fig: relax wiggle} % Label must come AFTER caption
\end{figure}
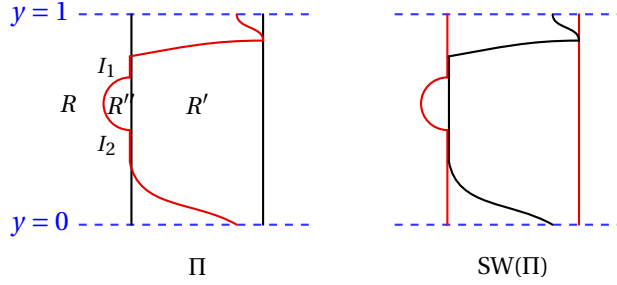

\end{subsection}

\begin{subsection}{From $\CH_{a}\times \CH_b$ to $(a,b)$-wiggle diagrams: proof of \eqref{eq: sw}}
\begin{definition}
Given $\pi\in \cPF_{m,n}$, we associate an infinite curve $\operatorname{IC}(\pi)$ as follows.
\begin{itemize}
    \item Step 1) Draw an infinite curve from $P_{\pi}$ by periodic extension.
    \item Step 2) Perturb every north step. Fix a small number $\epsilon>0$ and consider a north step starting from $(x,y)$ with label $\ell$. Shift this north step horizontally to the right by $\left(nx+m(\ell-y)\right)\epsilon$. East steps are modified accordingly. See Figure~\ref{fig: perturb}.
    \item Step 3) Finally, rotate and scale the whole picture so that the point $(m,n)$ is sent to $(0,1)$.
\end{itemize}
Given two vertically adjacent north steps $N$ and $N'$ of $\pi$, recall that the label of $N'$ is strictly larger than that of $N$. Therefore, after Step~2), the starting point of $N'$ lies weakly to the right of the end point of $N$. Hence, the resulting object is an infinite curve with period $(m,n)$ that moves strictly to the right or upward. After completing Step~3), $\operatorname{IC}(\pi)$ becomes a $y$-monotonic infinite curve with period $(0,1)$.

It is easy to check that $\operatorname{IC}(\pi')$ stays strictly to the left of $\operatorname{IC}(\pi)$ if $\pi \prec \pi'$. Finally, for $(\pi^{\bullet},\tau^{\bullet})\in \CH_a\times \CH_b$, we associate an $(a,b)$-wiggle diagram (with a relaxed third condition; see Remark~\ref{remark: relax second}) $\WD(\pi^{\bullet},\tau^{\bullet})$ by drawing $a$ black strands from $\operatorname{IC}(\pi^{(i)})$ for $1\le i\le a$ and $b$ red strands from $\operatorname{IC}(\tau^{(i)})$ for $1\le i\le b$.
\end{definition}

\begin{figure}[ht]
    \centering
    \begin{tikzpicture}[scale=0.8]
        % --- Definitions ---
        \def\m{3} \def\n{2} \def\k{2}
        \def\M{6} \def\N{4}
    
        \colorlet{path0}{black!70!orange}
        \colorlet{path1}{blue!70!cyan}
        \colorlet{celltext}{gray!70!black}
    
        \tikzset{
            grid line/.style={gray!20, thin},
            axis line/.style={->, >=stealth, thick},
            diag line/.style={dashed, thick, orange!80},
            path line/.style={line width=2pt, cap=round, join=round},
            content node/.style={font=\sffamily\tiny, text=celltext},
            label node/.style={font=\small\bfseries}
        }
    
        % SHEET 0
        \begin{scope}[local bounding box=sheet0]
            \foreach \x in {-1,...,1} {
                \foreach \y in {0} {
                    \draw[grid line] (\x,\y) rectangle (\x+1,\y+1);
                    %\pgfmathsetmacro{\cont}{int(0 + \M*\y - \N*(\x+1))}
                    %\node[content node] at (\x+0.5, \y+0.5) {$\cont$};
                }
            }
            %\draw[axis line] (-1.2,0) -- (4.2,0) node[right] {};
            %\draw[axis line] (0,-0.5) -- (0,2.5) node[above] {};
            %\draw[diag line] (-1, -0.66) -- (3.5, 2.33);
            \draw[path line, path0] (-2,0)--(0,0) -- (0,1) -- (2.5,1);
            \fill (0,0) circle (2pt);
            \node[label node] at (-0.3,0.5) {$\ell$};
             \node[label node] at (-0.1,-0.3) {$(x,y)$};
            %\node[label node] at (0.7,1.5) {$r_2$};
            %\node[label node] at (0.7-3,1.5-2) {$r_2$};
        \end{scope}
    
        % SHEET 1
        \begin{scope}[xshift=7cm, local bounding box=sheet1]
            \foreach \x in {-1,...,1} {
                \foreach \y in {0} {
                    \draw[grid line] (\x,\y) rectangle (\x+1,\y+1);
                    %\pgfmathsetmacro{\cont}{int(0 + \M*\y - \N*(\x+1))}
                    %\node[content node] at (\x+0.5, \y+0.5) {$\cont$};
                }
            }
            %\draw[axis line] (-1.2,0) -- (4.2,0) node[right] {};
            %\draw[axis line] (0,-0.5) -- (0,2.5) node[above] {};
            %\draw[diag line] (-1, -0.66) -- (3.5, 2.33);
            \draw[path line, path0] (-2,0)--(0.3,0) -- (0.3,1) -- (2.5,1);
            %\fill (2,1) circle (2pt);
            %\node[label node] at (-0.3,0.5) {$\ell$};
            \fill (0.3,0) circle (2pt);
             \node[label node] at (0.1,-0.3) {$(x+r,y)$};
            %\node[label node] at (0.7,1.5) {$r_2$};
            %\node[label node] at (0.7-3,1.5-2) {$r_2$};
        \end{scope}
    \end{tikzpicture}
    \caption{On the left, a local picture around a labeled north step is shown. The right figure illustrates the result after perturbing the north step to the right by a distance $r=\left(nx+m(\ell-y)\right)\epsilon$.}
    \label{fig: perturb}
\end{figure}
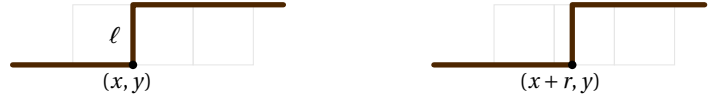

\begin{proof}[Proof of \eqref{eq: sw}]
For $\pi^{\bullet}\in \CH_{a}$ and $\tau^{\bullet}\in \CH_b$, positive crossings in $(\pi^{\bullet},\tau^{\bullet})$ defined in Section~\ref{subsec: Crossings and q-stat} correspond to positive crossings in $\WD(\pi^{\bullet},\tau^{\bullet})$ defined in Section~\ref{subsec: operators}. The map $\WD^{-1}\circ \SW \circ \WD: \CH_{a}\times \CH_{b} \rightarrow \CH_{b}\times \CH_{a}$ trivially preserves the skeleton and by Lemma~\ref{lem:stat+pos depend only on skeleton} and Proposition~\ref{prop: sw wiggle diagram} preserves $\stat$. Since this map is an involution (bijection), we complete the proof of \eqref{eq: sw}. 
\end{proof}

\end{subsection}

\begin{subsection}{Proof of \eqref{eq: qsw} }
We prove \eqref{eq: qsw} by introducing a map $\SW'$ which is a slight modification of the map $\SW$. By establishing its property, Proposition~\ref{prop: qsw wiggle}, we complete the proof. We begin by defining several terminologies.

We define $B(a,b)$ to be the set of $(a,b)$-wiggle diagrams $\Pi$ that satisfy the following conditions. Let $\ell$ denote the rightmost black strand. The conditions are:
\begin{itemize}
    \item  On the horizontal line $y=0$, $\ell$ has a bigger $x$-coordinate than any other red strand.
    \item  Let $\Pi'$ be the $(a-1,b)$-wiggle diagram obtained by deleting $\ell$ from $\Pi$. Then there exists a black strand in $\SW(\Pi')$ that intersects $\ell$.
\end{itemize}
We define $R(a,b)$ to be the set of $(b,a)$-wiggle diagrams satisfying the analogous conditions with the roles of black and red strands swapped.

For a diagram $\Pi \in B(a,b)$, we construct $\SW'(\Pi)$ as follows.
Let $\ell$ be the rightmost black strand and $\ell'$ be the rightmost red strand. Starting from the point where $\ell'$ intersects $y=0$, traverse $\ell'$ in the increasing $y$ direction. Let $v$ be the first point where $\ell'$ intersects $\ell$. The existence of such an intersection is guaranteed by the second condition in the definition of $B(a,b)$. Let $R$ denote the region immediately to the south of the crossing $v$ (it turns out that $R\notin \Reg(\Pi)$). Let $U$ be the set of regions belonging to the same connected component as $R$ in the graph $G(\Pi)$. We define the configuration $\SW'(\Pi)$ by changing the colors of segments that are surrounded by regions in $\Reg(\Pi) \cup U$.

\begin{proposition}\label{prop: qsw wiggle}
    The map $\SW'$ is a bijection from $B(a,b)$ to $R(a,b)$ such that the number of positive crossings (in the principal band) increases by 1.
\end{proposition}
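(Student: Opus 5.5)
We follow the strategy used for Proposition~\ref{prop: sw wiggle diagram}: reduce to a base case by operations (I) and (II), and check that these operations commute with $\SW'$ and shift the positive-crossing count equally on both sides. We first record two basic facts. The region $R$ south of $v$ is not regular, so $U\cap\Reg(\Pi)=\emptyset$. Also, recoloring along $U$ turns the rightmost black strand $\ell$ red below $v$ and keeps it black above $v$, or the reverse. These facts make sense of the statement that $\SW'(\Pi)$ is a $(b,a)$-wiggle diagram.

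To prove that $R\notin\Reg(\Pi)$, we use Lemma~\ref{lem: infinite region classification}. Suppose $R$ were connected to $R^{+}$ in $G(\Pi)$. Since $\ell$ is to the right of every red strand at $y=0$, and $v$ is the first meeting of $\ell'$ with $\ell$, a path from $R$ to $R^{+}$ would have to cross the segment of $\ell$ between $v$ and $v^{+}$. This would contradict $v$ being the first such intersection. Given this, the components $\Reg(\Pi)$ and $U$ are disjoint unions of components of $G(\Pi)$, so the argument that $\SW(\Pi)$ is a wiggle diagram applies verbatim: at every crossing the two incident west/east regions lie in the same component, so recoloring swaps colors consistently. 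Hence $\SW'(\Pi)$ is a $(b,a)$-wiggle diagram.

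Next we check that $\SW'(\Pi)\in R(a,b)$. The rightmost red strand of $\SW'(\Pi)$ is the recolored strand through $v$. For the second defining condition we compare with $\SW(\Pi')$, where $\Pi'$ deletes $\ell$. The regions of $\Pi'$ refine to those of $\Pi$ away from $\ell$, and $\Reg(\Pi')$ is obtained from $\Reg(\Pi)\cup U$ by forgetting $\ell$. This gives the required intersection.

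For the inverse, starting from a diagram in $R(a,b)$ we run the same recipe with colors swapped: locate the first crossing of the rightmost black strand with the rightmost red strand, and recolor $\Reg\cup U$. Since $\Reg$ and the graph $G$ depend only on the underlying strands and not on the colors, $\Reg(\SW'(\Pi))=\Reg(\Pi)$. We also check that the distinguished crossing $v$ and the component $U$ are recovered. This gives $\SW'\circ\SW'=\mathrm{id}$ across the two sets.

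For the count, positive crossings at vertices whose neighbouring regions are all regular, or all non-regular and outside $U$, behave exactly as in Proposition~\ref{prop: sw wiggle diagram}. To isolate the $+1$, we apply operations (I) and (II) only away from $U$, or compatibly with it, and check as in Lemma~\ref{lem: operation valid} that each operation changes the count equally in $\Pi$ and in $\SW'(\Pi)$. The reduced diagrams are then those of Proposition~\ref{prop: wiggle classification}, with one extra component $U$ containing $v$. There we expect a direct check: in the reduced picture $U$ is a full simple region bounded by $\ell$ and $\ell'$, and recoloring it turns the single negative crossing at $v$ into a positive one. This yields the net $+1$.

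The main obstacle is the reduction step. Operations (I) and (II) must preserve membership in $B(a,b)$, including the second condition involving $\SW(\Pi')$, and must not destroy the distinguished crossing $v$. We would restrict to operations on simple regions not adjacent to $v$, and argue through Lemmas~\ref{y} and~\ref{z} that such an operation always exists unless the diagram is already in the reduced form just described.
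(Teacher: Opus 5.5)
There is a genuine gap. The step that carries all the content is the $+1$ count, obtained by a reduction through operations (I) and (II) that respects $v$, $U$ and membership in $B(a,b)$. You do not carry it out, and the dichotomy you propose for it is false. Let $\ell$ and $\ell'$ cross transversally at $u$ and at $v$, once each per period, with $y=0$ strictly between $u$ and $v$ (where $\ell'$ is left of $\ell$) and no other crossings; add non-interacting strands far to the left if you want $a,b\ge 2$. This diagram lies in $B(a,b)$: after deleting $\ell$, the strand $\ell'$ has no crossings, so it turns black in $\SW(\Pi')$, and it meets $\ell$. Its only simple regions are the two proper lenses just below and just above $v$. Both have $v$ as a corner, and contracting either one destroys $v$ and exits $B(a,b)$. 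Moreover $U$ is the proper lens below $v$, not a full simple region. (The proposition does hold here: one positive crossing becomes two.) So you would need more base cases. More seriously, you would need an analogue of Lemma~\ref{lem: operation valid} for $\SW'$, where $U$ is recolored although it is not regular, and a proof that the global second condition of $B(a,b)$ survives each operation; none of this is supplied. Similarly, ``the argument that $\SW(\Pi)$ is a wiggle diagram applies verbatim'' has nothing to refer to. The paper obtains that property only through the reduction, and color consistency at each crossing does not show that the recolored curves are individual period-$1$ strands, $b$ black and $a$ red. Also, $U$ is a finite component and must be taken together with its vertical translates. Finally, your check of $\SW'(\Pi)\in R(a,b)$ deletes the wrong strand: that condition concerns the rightmost red strand of $\SW'(\Pi)$, which in general is a recolored composite, not $\ell$.

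Your argument for $R\notin\Reg(\Pi)$ does not work either. A path in $G(\Pi)$ runs through crossings, and passing a crossing on $\ell$ between $v$ and $v^{+}$ says nothing about where $\ell'$ first meets $\ell$. In fact the claim needs the second condition of $B(a,b)$, which you never use. Let $\ell'$ touch $\ell$ from the left once per period at $v$. Let a second black strand $\beta$ cross $\ell'$ at heights $c_1<c_2$ between $v^{-}$ and $v$, bulging right of $\ell'$ but staying left of $\ell$. Then $R$ is the east region of the crossing at $c_1$, whose west region is the unbounded left region, so $R\in\Reg(\Pi)$. Yet the first condition and the choice of $v$ both hold; only the second condition fails.

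The idea you are missing is the paper's auxiliary strand, which makes a new reduction unnecessary. Add a red strand $\ell''$ far to the right that meets $\ell$ only at one point $v'$ on the segment of $\ell$ from $w=\ell\cap\{y=0\}$ up to $v$. The edge at $v'$ joins $U$ to the unbounded region right of $\ell''$. Meanwhile, the second condition (through a region of $\Reg(\Pi')$ that $\ell$ passes through) keeps the new bounded regions between $\ell$ and $\ell''$ regular. Hence $\SW(\bar\Pi)$ is exactly $\SW'(\Pi)$ with $\ell''$ turned black. Proposition~\ref{prop: sw wiggle diagram} applied to $\bar\Pi$ then gives at once that $\SW'(\Pi)$ is a $(b,a)$-wiggle diagram. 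Since $v'$ changes from positive to negative, it also gives that the number of positive crossings rises by exactly one.
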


\begin{proof}
    Let $\Pi \in B(a,b)$. Let $\ell_{1},\ell_{2},\dots$ (respectively $\ell'_{1},\ell'_{2},\dots$) be black strands (respectively red strands) enumerated from the right. Let $\Pi'$ denote the wiggle diagram obtained by deleting $\ell_1$ from $\Pi$. By the definition of $B(a,b)$, in $\Pi'$ there exists a region $R' \in \Reg(\Pi')$ whose boundary contains a segment of $\ell'_1$ and which intersects the strand $\ell_1$. By Lemma~\ref{lem: infinite region classification}, there exists a path from $R'$ to $(R')^{+}$
\begin{equation*}
    R' = Y_1 \rightarrow Y_2 \rightarrow \dots \rightarrow Y_{c} = (R')^{+}
\end{equation*}
in the graph $G(\Pi')$. Since each edge in $G(\Pi')$ corresponds to a crossing of $\Pi'$, we may identify the above path as a sequence of crossings $(w_1,w_2,\dots,w_{c-1})$ such that the edge between $Y_i$ and $Y_{i+1}$ arises from the crossing $w_i$.

    Returning to $\Pi$, let $w$ be the point where $\ell_1$ intersects the line $y=0$. Traversing $\ell'_1$ upwards from the line $y=0$, let $v$ be the first point where $\ell'_1$ intersects $\ell_1$.
    The strand $\ell_1$ passes through the region $R'$ (from $\Pi'$), dividing it. Let $R''$ be the region located to the left of $\ell_1$. Note that $R'' \in \Reg(\Pi)$ as the path induced by sequence of crossings $(w_1,w_2,\dots,w_{c-1})$ still connects $R''$ and $(R'')^{+}$. See the middle of Figure~\ref{fig:q_sw_picture} for the description of $\Pi$.

    We construct an augmented diagram $\bar{\Pi}$ by adding an auxiliary red strand $\ell''$. This strand is placed to the right of all strands in $\Pi$ and is constructed so that it intersects $\ell_1$ at exactly one point, $v'$, located on the segment of $\ell_1$ between $w$ and $v$.  See the right of Figure~\ref{fig:q_sw_picture} for the description of $\bar{\Pi}$.

    Let $R_1, R_2, R_3$, and $R_4$ denote the west, south, east, and north regions of the crossing $v'$ in $\bar{\Pi}$, respectively.
    In the graph $G(\bar{\Pi})$, the regions $R_1$ and $R_3$ are connected by an edge. Since $R_3$ is an unbounded region (being on the far right), the connected component containing $R_1$ and $R_3$ is contained in $\Reg(\bar{\Pi})$.
    Furthermore, $R_3$ is connected to $R''$. Since $R'' \in \Reg(\Pi)$, it follows that $R_3 \in \Reg(\bar{\Pi})$. We also have $R_2 \in \Reg(\bar{\Pi})$ as $R_3=R_2^{+}$.

    Consequently, when we form $\SW(\bar{\Pi})$, the auxiliary strand $\ell''$ ensures that the regions corresponding to the component $U$ (defined in the construction of $\SW'$; connected component of $G(\Pi)$ having $R_1$) are regular regions. In $\SW(\bar{\Pi})$, the strand $\ell''$ changes to a black strand, while the configuration of the remaining strands corresponds exactly to $\SW'(\Pi)$.

    We now track the change in positive crossings. In $\bar{\Pi}$, the intersection $v'$ is a positive crossing and becomes a negative crossing in $\SW(\bar{\Pi})$. By Proposition~\ref{prop: sw wiggle diagram}, we deduce that the number of positive crossings increases by 1 from $\Pi$ to $\SW'(\Pi)$.

     Now it remains to show $\SW'(\Pi)\in R(a,b)$. Let $\hat{\ell}$ be the rightmost red strand in $\SW'(\Pi)$. Then $\hat{\ell}$ does not pass through any crossings in the sequence $(w_1,w_2,\dots,w_{c-1})$, therefore the path induced by this sequence connects $\hat{R}$ and $(\hat{R})^{+}$ where $\hat{R}$ is the region to the east of $w_1$ in $\SW'(\Pi)-\hat{\ell}$. Since $\hat{R}$ intersects $\hat{\ell}$, we conclude $\SW'(\Pi)\in R(a,b)$.

\begin{figure}
\begin{tikzpicture}[>=stealth, line width=1pt, scale=1]

    % --- STYLE DEFINITIONS ---
    \tikzset{
        blue dashed/.style={blue!70, dashed, thin},
        red curve/.style={red, thick, line cap=round},
        black curve/.style={black, thick, line cap=round},
        dot/.style={fill=red, circle, inner sep=1.5pt}
    }

    % --- MACRO FOR THE TONGUE SHAPE (Region R) ---
    \newcommand{\drawTongue}[2]{
        \begin{scope}[xshift=#2]
            % The Tongue Shape
            \draw[red curve] (-0.5, 3.5) ..controls (2.5,3).. (-0.5, 2.5);
            % Label
            \node[red] at (0.5, 3.0) {\Large #1};
        \end{scope}
    }
    
    % ==========================================
    % DIAGRAM 1: Pi' (Left)
    % ==========================================
    \begin{scope}[xshift=0cm]
        \draw[blue dashed] (-1, 0) -- (2.5, 0); 
        \node[ blue] at (2.5,-0.3) {$y=0$};

        % Red curve ell' ending in a dot
        \draw[red curve] (-0.5, 0) node[below, black] {\textcolor{red}{$\ell_1'$}}
            to[out=75, in=200] (0.8, 1.8) coordinate (end_lp);
        \node[dot] at (end_lp) {};

        % Top Region R'
        \drawTongue{}{0cm}
        \node at (0, 3) {$R'$};

        % Vertical Ellipsis
        \node[red, scale=1.5] at (0.2, 2.3) {$\vdots$};

        % Label
        \node at (0.5, -1) { $\Pi'$};
    \end{scope}

    % ==========================================
    % DIAGRAM 2: Pi (Middle)
    % ==========================================
    \begin{scope}[xshift=5cm]
        \coordinate (w) at (1.2, 0);
        \coordinate (v) at (0.8, 1.8);

        \draw[blue dashed] (-1, 0) -- (3, 0);

        % Red curve ell' (touches ell at v)
        \draw[red curve] (-0.5, 0) node[below, black] {\textcolor{red}{$\ell_1'$}}
            to[out=75, in=200] (v);

        % Top Region R''
        \drawTongue{}{0cm}
         \node at (0, 3) {$R''$};

        % Black curve ell
        \draw[black curve] (w) node[below] {$\ell_1$}
            to[out=100, in=-80] (v)
            to[out=100, in=-90] (0.5, 2.7) 
            to[out=90, in=-100] (0.7, 3.5) 
            to[out=80, in=-90] (0.8, 3.8);

        % Intersection dot v
        \node[dot] at (v) {};
        \node[right] at (v) {$v$};
        \node at (0.9,0.15) {$w$};

        % Vertical Ellipsis
        \node[red, scale=1.5] at (0.2, 2.3) {$\vdots$};

        % Region R1 (Bounded by ell' and ell, below v)
        \node at (0.4, 0.8) {$R_1$};

        % Label
        \node at (1, -1) { $\Pi$};
    \end{scope}

    % ==========================================
    % DIAGRAM 3: overline{Pi} (Right)
    % ==========================================
    \begin{scope}[xshift=10.5cm]
        % Coordinates
        \coordinate (w) at (1.2, 0);
        \coordinate (v) at (0.8, 1.8);       % Upper intersection (same as Pi)
        \coordinate (v_prime) at (1.0, 1.0); % Lower intersection

        \draw[blue dashed] (-1, 0) -- (4.5, 0);

        % Top Region R''
        \drawTongue{}{0cm}
        \node at (0, 3) {$R''$};

        % Black curve ell (Passes through v' then v)
        \draw[black curve] (w) node[below] {$\ell_1$} 
            to[out=100, in=-90] (v_prime)
            to[out=90, in=-80] (v)
            to[out=100, in=-90] (0.5, 2.7) 
            to[out=90, in=-100] (0.7, 3.5) 
            to[out=80, in=-90] (0.8, 3.8);

        % Red curve ell' (Left) 
        % R1 stays the same: ell' must touch ell at v
        \draw[red curve] (-0.5, 0) node[below, black] {\textcolor{red}{$\ell_1'$}}
            to[out=75, in=200] (v);

        % Red curve ell'' (Right)
        % Comes from top right, touches at v', then goes South-East
        \draw[red curve] (3.0, 3.5) node[right, black] {\textcolor{red}{$\ell''$}}
            to[out=-120, in=70] (v_prime) % curve down
            to[out=-90, in=150] (1.5, 0.6) % turn south-east
            to[out=-30, in=170] (3.5, 0.2); % travel south-east

        % Intersection dots
        \node[dot] at (v) {};
        \node[left] at (v) {$v$};
        \node at (0.9,0.15) {$w$};

        \node[dot] at (v_prime) {};
        \node[right] at (v_prime) {$v'$};

        % Vertical Ellipsis
        \node[red, scale=1.5] at (0.2, 2.3) {$\vdots$};

        % Region labels
        \node at (0.4, 0.8) {$R_1$}; % Left of ell/ell'
        \node at (1.7, 0.2) {$R_2$}; % Right of ell, left of ell''
        \node at (2.5, 1) {$R_4$}; % Right of ell''
        \node at (1.3, 2.0) {$R_3$}; % Small region between v and v'?

        % Label
        \node at (1.5, -1) { $\bar{\Pi}$};
    \end{scope}

\end{tikzpicture}\caption{The construction of the auxiliary strand $\ell''$.}
        \label{fig:q_sw_picture}
\end{figure}
\end{proof}

\begin{proof}[Proof of \eqref{eq: qsw}]
Let $A_1$ be the set of pairs $(\pi^{\bullet},\tau^{\bullet}) \in \bar{\CH}_{a}\times \hat{\CH}_{b}$ such that $\WD(\pi^{\bullet},\tau^{\bullet}) \in B(a,b)$, and let $B_1$ be the set of pairs $(\pi^{\bullet},\tau^{\bullet}) \in \bar{\CH}_{b+1}\times \hat{\CH}_{a-1}$ such that $\WD(\pi^{\bullet},\tau^{\bullet}) \in B(b+1,a-1)$. Similarly, let $C_1$ be the set of pairs $(\pi^{\bullet},\tau^{\bullet}) \in \hat{\CH}_{b}\times \bar{\CH}_{a}$ such that $\WD(\pi^{\bullet},\tau^{\bullet}) \in R(a,b)$,
and let $D_1$ be the set of pairs $(\pi^{\bullet},\tau^{\bullet}) \in \hat{\CH}_{a-1}\times \bar{\CH}_{b+1}$ such that $\WD(\pi^{\bullet},\tau^{\bullet}) \in R(b+1,a-1)$.

Then there exists a bijection $f \colon A_1 \to C_1$ defined by $f = \WD^{-1} \circ \SW' \circ \WD$.
By Proposition~\ref{prop: qsw wiggle}, this bijection satisfies 
\[
z(v) = z(f(v)) \quad \text{ and } \quad \stat(v) = \stat(f(v)) -1 \quad \text{ for all } \quad v \in A_1.
\]
Similarly, there exists a bijection $g \colon B_1 \to D_1$ satisfying the same conditions: $z(v) = z(g(v))$ and $\stat(v) = \stat(g(v)) - 1$.

Finally, we define a bijection 
\[
f' \colon \bar{\CH}_{a}\times \hat{\CH}_{b}\setminus A_1 \to \bar{\CH}_{b+1}\times \hat{\CH}_{a-1}\setminus B_1
\]
as follows.
For $(\pi^{\bullet},\tau^{\bullet})$ in the domain, let $\nu^{\bullet} := (\pi^{(2)},\dots,\pi^{(a)}) \in \hat{\CH}_{a-1}$. Considering  $(\rho^{\bullet},\gamma^{\bullet}) := \WD^{-1}\circ\SW\circ\WD(\nu^{\bullet},\tau^{\bullet})$, we let
\[
    f'(\pi^{\bullet},\tau^{\bullet}) := \bigl((\pi^{(1)},\rho^{\bullet}),\gamma^{\bullet}\bigr) \in \bar{\CH}_{b+1}\times \hat{\CH}_{a-1}.
\]
It is immediate that $f'$ preserves both $z$ and $\stat$.
In the same manner, one can define a bijection $g' \colon \hat{\CH}_{b}\times \bar{\CH}_{a}\setminus C_1 \to \hat{\CH}_{a-1}\times \bar{\CH}_{b+1}\setminus D_1$ satisfying analogous properties.

Together, the bijections $f$, $g$, $f'$, and $g'$ complete the proof of \eqref{eq: qsw}.
\end{proof}

\end{subsection}

\section{Proof of Theorem~\ref{thm: main}}\label{Sec: wil}
In this section, we derive an expression for \( f[-MX^{m,n}]\cdot 1 \) for any symmetric function \( f \), in terms of the operators \( \h_i \), \( \hb_i \), and \( \hhat_i \) defined in the previous section (Proposition~\ref{prop: uniform description} and Corollary~\ref{cor: final}). In particular, we present a Jacobi--Trudi type formula for \( s_{\lambda}[-MX^{m,n}]\cdot 1 \), whose direct implication is the Loehr--Warrington conjecture for \( \nabla^{m}s_{\lambda} \). In addition, Corollary~\ref{cor: final} proves Theorem~\ref{thm: main} for free.

\subsection{Description of $f[-MX^{m,n}]\cdot 1$}
We consider square matrices whose entries are of the form $\h_i$, $\hb_i$, or $\hhat_i$. For such an $\ell$ by $\ell$ matrix $A$, entries of $A$ do not commute, so we need some extra care to define $\det(A)$ properly. We define the operator $\det(A)$ by
\begin{equation*}
    \det(A):=\sum_{w\in \mathfrak{S}_\ell}(-1)^{w}A_{w_1,1}A_{w_2,2}\cdots A_{w_\ell,\ell}.
\end{equation*}
 Throughout this section, we only consider matrices satisfying the following two conditions:
\begin{enumerate}
    \item Each column consists of a single type of entry (i.e., entirely $\h_i$'s, $\hb_i$'s, or $\hhat_i$'s).
    \item The indices of the entries in each column decrease strictly by one from top to bottom.
\end{enumerate}

For example, the following matrix satisfies conditions (1) and (2):
\begin{equation}\label{p}
    \begin{pmatrix}
        \h_2 & \h_3  & \hb_2 & \hhat_3\\
        \h_1 & \h_2  & \hb_1 & \hhat_2\\
        \h_0 & \h_1  & \hb_0 & \hhat_1\\
        \h_{-1} & \h_0  & \hb_{-1} & \hhat_0
    \end{pmatrix}
    =
    \begin{pmatrix}
        \h_2 & \h_3  & \hb_2 & \hhat_3\\
        \h_1 & \h_2  & \hb_1 & \hhat_2\\
        1 & \h_1  & 0 & \hhat_1\\
        0 & 1  & 0 & 1
    \end{pmatrix}.
\end{equation}
Since such a matrix is uniquely determined by its first row (by lowering indices for subsequent rows), we adopt a compact notation where the bracket $[\dots]$ lists the elements of the first row. For instance, the notation $[\h_2, \hhat_3, \hb_4]$ represents the square matrix
\begin{equation*}
    \begin{pmatrix}
        \h_2 & \hhat_3 & \hb_4\\
        \h_1 & \hhat_2 & \hb_3\\
        \h_0 & \hhat_1 & \hb_2
    \end{pmatrix}.
\end{equation*}
Similarly, the matrix in \eqref{p} is denoted by $[\h_2, \h_3, \hb_2, \hhat_3]$.

Lastly, we introduce the following notation. For $f\in \mathbb{C}(q,t)[\mathbf{z}]$, we denote $\Omega(f)$ to be the one obtained by specializing each $z_{i,j,k}$ to $t^jx_k$. For example, given $\pi^{\bullet}\in \cPF^{k}_{m,n}$, we have
\begin{equation*}
    \Omega(z(\pi^{\bullet}))=t^{\area(P_{\pi^{\bullet}})}x^{f_{\pi^{\bullet}}}
\end{equation*}

\begin{lem}\label{lem: C alpha operator expression}
    For a composition $\alpha=(\alpha_1,\dots,\alpha_{\ell}) \models k$, consider a sequence $\{v_i\}_{1 \leq i\leq k}$ defined by
    \begin{equation*}
        v_i=\begin{cases*}
            \hb_i \quad \text{if } i=\sum_{j=0}^{r} \alpha_{\ell-j} \text{ for some } r \ge 0, \\
            \hhat_i \quad \text{otherwise}.
        \end{cases*}
    \end{equation*}
    In other words, $v_i$ is $\hb_i$ if $i$ is a partial sum of the parts of $\alpha$ starting from the end, and $\hhat_i$ otherwise.
    Let $H(\alpha)=[v_1,v_2,\dots,v_k]$. Then we have
    \begin{equation}\label{eq: C alpha operator}
        C_{\alpha}[-MX^{m,n}] \cdot 1 = \Omega\left(\det([v_1,v_2,\dots,v_k]) \cdot 1\right).
    \end{equation}
\end{lem}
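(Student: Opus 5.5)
By Corollary~\ref{cor: compositional shuffle final}, it suffices to show
\[
\det([v_1,\dots,v_k])\cdot 1=\sum_{\pi^\bullet\in\PTab_{m,n}(k;\alpha)} q^{\stat(\pi^\bullet)} z(\pi^\bullet),
\]
and then apply $\Omega$. The plan is to expand the determinant and cancel terms with a sign-reversing involution of Gessel--Viennot/Gasharov type, so that the surviving terms are exactly the $P$-tableaux of shape $(k)$ with the prescribed area pattern.

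\textbf{Expanding the determinant.} Since each column has a single type and its indices drop by one going down, the entry in row $r$ and column $j$ is $v^{\mathrm{type}}_{j-r+1}$. A permutation $w$ therefore contributes
\[
(-1)^w\, A_{w_1,1}\cdots A_{w_k,k}\cdot 1 .
\]
The rightmost factor, coming from column $k$, acts first, so it produces the first chain in the concatenated tuple. Thus a term corresponds to a sequence of chains read from column $k$ down to column $1$, whose lengths form a ``composition'' of $k$ with zeros allowed. Because $\hb_0=0$, a barred column with length zero kills the term, while $\h_0=\hhat_0=1$ just contributes an empty chain. Concatenating the chains gives a tuple in $\cPF^k_{m,n}$ built from $\prec$-increasing blocks. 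The weight is $q^{\stat}z$, by the definition of the operators together with Lemma~\ref{lem:stat+pos depend only on skeleton}.

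The column types fix where the first entry of each block must pass through the origin (bar) or must not (hat). The claim to verify is that, with the $v_i$ as defined from partial sums of $\alpha$ read from the end, the identity permutation term has blocks of length one in column order. This term contributes exactly the tuples whose $i$-th entry has $\aseq_1=0$ precisely at the positions $1+\sum_{j\le \ell}\alpha_j$. Reversing the column order accounts for the ``from the end'' convention.

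\textbf{Cancellation.} I would define a sign-reversing involution on all terms. Take the first place, reading along the concatenated tuple, where two consecutive elements belonging to adjacent blocks satisfy $\pi^{(i)}\prec\pi^{(i+1)}$, that is, where a descent in the $P$-tableau sense fails. Then merge or split the blocks there, moving one element from one block to the neighbouring block. This changes $w$ by a transposition of adjacent columns, and the index shifts by one are designed so that this stays inside the matrix. The statistic and skeleton are unchanged because only the grouping changes, not the ordered tuple.

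The type constraints (hat versus bar for the block-initial element) must be respected. Moving an element between blocks changes which element is block-initial, so I must check that the new block's initial element has the type required by the new column. This is where the column structure matters: the columns are all hat except at the specified barred positions, and a barred column must keep a nonempty chain starting at the origin.

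The fixed points are the terms where every block has length one and no adjacent pair is $\prec$-related. These are exactly the elements of $\PTab_{m,n}(k;\alpha)$, and the first-entry condition $\aseq(P_{\pi^{(1)}})_1=0$ is forced because column $k$ is barred (since $k=\sum_j\alpha_j$).

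\textbf{Main obstacle.} The hard part is making the involution well defined when the moved element changes the bar/hat type of a block start, and checking that the matrix entries are not commutative at that point. The Gasharov switch must commute operators of different types. The identities \eqref{eq: sw} and \eqref{eq: qsw} are what allow this. My first attempt would be to use \eqref{eq: sw} to reorder same-type columns freely. I would then use \eqref{eq: qsw}, whose $q$ factor compensates exactly for the change in $\stat$ that occurs when the relative order of a hat chain and a bar chain is swapped, to handle adjacent columns of mixed type. This would reduce the determinant to a form where the naive Gasharov cancellation within the concatenated tuple applies directly.
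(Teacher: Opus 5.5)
Your overall strategy matches the paper's. The paper's one-line ``inclusion--exclusion argument'' is exactly this: expand the upper Hessenberg matrix $H(\alpha)$, whose nonvanishing terms correspond to compositions of $k$ into blocks of consecutive columns with sign $(-1)^{k-p}$ for $p$ blocks, and then cancel over the choice of breaks.

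However, the step you single out as the main obstacle is misdiagnosed, and the remedy you propose is the wrong tool. No operators ever need to be commuted. By the definition of $\h_a,\hhat_a,\hb_a$, each term $A_{w_1,1}\cdots A_{w_k,k}\cdot 1$ is a sum over concatenated tuples weighted by $q^{\stat}z$ of the whole tuple. Toggling a break does not change the tuple, as you yourself note. So \eqref{eq: sw} and \eqref{eq: qsw} are not needed here. Worse, \eqref{eq: qsw} introduces a factor of $q$ and extra terms, which would destroy the weight-preserving cancellation. The lemma needs nothing from Section~\ref{Sec: sw}.

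What is actually missing is one elementary fact that settles the hat/bar bookkeeping: if $\pi\prec\pi'$, then $P_{\pi'}$ does not pass through the origin. Indeed, if it did, ``weakly to the left'' would force $P_\pi$ through the origin as well, and then both paths end with the same east step into $(m,n)$. Three consequences follow for every nonvanishing term:
\begin{enumerate}
\item The elements through the origin are exactly the initial elements of the barred chains, at positions $1+\sum_{j\le r}\alpha_j$.
\item Every other element avoids the origin, so it may legitimately begin a hat chain.
\item $\pi^{(i)}\prec\pi^{(i+1)}$ never holds when $i+1$ is one of these positions, so you never merge across a barred column (which would produce $\hb_0=0$).
\end{enumerate}

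You should also fix the involution itself. Take the first $i$ with $\pi^{(i)}\prec\pi^{(i+1)}$, wherever it occurs (inside a block or at a block boundary), and toggle the break between $i$ and $i+1$. As you worded it, only pairs lying in adjacent blocks are examined, and that map is not an involution.

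Toggling a break is also not ``moving one element to the neighbouring block'' or an adjacent column transposition. It changes the number of blocks by one and multiplies $w$ by the transposition of the two block-ending columns, which are adjacent only in special cases. That is what reverses the sign.

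With these corrections the fixed points are exactly $\PTab_{m,n}(k;\alpha)$, and your argument goes through.
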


\begin{proof}
    By an inclusion-exclusion argument, we have
    \begin{equation*}
        \det([v_1,v_2,\dots,v_k]) \cdot 1=\sum_{\pi^{\bullet}\in\PTab_{m,n}(k;\alpha)}q^{\stat(\pi^{\bullet})}z(\pi^{\bullet}).
    \end{equation*}
    The proof now follows from Corollary~\ref{cor: compositional shuffle final}.
\end{proof}

\begin{example}
    For $\alpha=(2,3)$, according to Lemma~\ref{lem: C alpha operator expression}, we have
    \begin{equation*}
        C_{(2,3)}[-MX^{m,n}] \cdot 1 = \Omega\left(\det([\hhat_1,\hhat_2,\hb_3,\hhat_4,\hb_5]) \cdot 1.\right)
    \end{equation*}
    Note that the matrix $[\hhat_1,\hhat_2,\hb_3,\hhat_4,\hb_5]$ is
    \begin{equation*}
        \begin{pmatrix}
            \hhat_1 & \hhat_2 & \hb_3 & \hhat_4 &\hb_5\\
            1 & \hhat_1 & \hb_2 & \hhat_3 &\hb_4\\
            0 & 1 & \hb_1 & \hhat_2 &\hb_3\\
            0 & 0 & 0 & \hhat_1 &\hb_2\\
            0 & 0 & 0 & 1 &\hb_1
        \end{pmatrix}.
    \end{equation*}
    Due to the block triangular structure, its determinant factors into a product
    \begin{equation*}
        \det \begin{pmatrix}
            \hhat_1 & \hhat_2 & \hb_3 \\
            1 & \hhat_1 & \hb_2 \\
            0 & 1 & \hb_1
        \end{pmatrix}
        \det \begin{pmatrix}
             \hhat_1 &\hb_2\\
             1 &\hb_1
        \end{pmatrix}
        = \det([\hhat_1, \hhat_2 , \hb_3]) \det([\hhat_1 , \hb_2 ]).
    \end{equation*}
    In general, for $\alpha=(\alpha_1, \dots, \alpha_{\ell}) \models k$, we have
    \begin{align}\label{eq: C alph product}
        C_{\alpha}[-MX^{m,n}] \cdot 1 = \Omega\left(\det([\hhat_1, \dots,\hhat_{\alpha_{\ell}-1}, \hb_{\alpha_{\ell}} ]) \cdots \det([\hhat_1, \dots,\hhat_{\alpha_{1}-1}, \hb_{\alpha_{1}} ]) \cdot 1\right).
    \end{align}
\end{example}

\begin{definition}
    Given a symmetric function $f$, let $Y$ and $Z$ be formal alphabets of variables. We expand the symmetric function $f[Y-Z]$ in terms of the basis $h_{\lambda}[Y]h_{\mu}[Z]$, i.e.,
    \begin{equation*}
        f[Y-Z] = \sum_{\lambda,\mu} c_{\lambda,\mu} h_{\lambda}[Y] h_{\mu}[Z].
    \end{equation*}
    We then define an operator $\Phi(f)$ by replacing each $h_{\lambda}[Y] h_{\mu}[Z]$ with $\h_{\lambda}\hhat_{\mu}$:
    \begin{equation*}
        \Phi(f) = \sum_{\lambda,\mu} c_{\lambda,\mu} \h_{\lambda} \hhat_{\mu}.
    \end{equation*}
\end{definition}

For example, considering the Schur function $s_{(2,2)}$, the expansion is
\begin{align*}
    s_{(2,2)}[Y-Z] &= h_{(2,2)}[Y] - h_{(3,1)}[Y] - h_{(2,1)}[Y]h_{1}[Z] + h_{3}[Y]h_{1}[Z] + h_{2}[Y]h_{(1,1)}[Z] \\
    &\quad + h_{(1,1)}[Y]h_{2}[Z] - 2 h_{2}[Y]h_{2}[Z] - h_{1}[Y]h_{(2,1)}[Z] \\
    &\quad + h_{(2,2)}[Z] + h_{1}[Y]h_{3}[Z] - h_{(3,1)}[Z].
\end{align*}
Therefore, the operator $\Phi(s_{(2,2)})$ is given by
\begin{align*}
    \Phi(s_{(2,2)}) &= \h_{(2,2)} - \h_{(3,1)} - \h_{(2,1)} \hhat_{1} + \h_{3}\hhat_{1} + \h_{2}\hhat_{(1,1)} + \h_{(1,1)}\hhat_{2} 
    - 2 \h_{2}\hhat_{2} - \h_{1} \hhat_{(2,1)} + \hhat_{(2,2)} + \h_{1} \hhat_{3} - \hhat_{(3,1)}.
\end{align*}
\begin{proposition}\label{prop: uniform description}
    For a symmetric function $f$, we have
    \begin{equation*}
        f[-MX^{m,n}] \cdot 1 =\Omega\left( \Phi(f) \cdot 1\right).
    \end{equation*}
\end{proposition}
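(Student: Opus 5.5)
Both sides of the identity are linear in $f$, and the map $f\mapsto\Omega(\Phi(f)\cdot 1)$ is linear as well. It therefore suffices to check the identity on a spanning set of $\Lambda$. The plan is to use the spanning set $\{C_\alpha\}_{\alpha}$, where $\alpha$ runs over compositions. This is a spanning set because the transition matrix from $\{C_\alpha\}$ to, say, the $h$-basis in each degree has full rank (this is standard from Haglund--Morse--Zabrocki: $\sum_{\alpha\models k} C_\alpha$ over compositions, and in fact the $C_\alpha$ span $\Lambda_k$). For $f=C_\alpha$, Lemma~\ref{lem: C alpha operator expression} gives
\[
C_\alpha[-MX^{m,n}]\cdot 1=\Omega\bigl(\det(H(\alpha))\cdot1\bigr).
\]
Hence the whole proof reduces to the operator identity $\Phi(C_\alpha)\cdot 1=\det(H(\alpha))\cdot 1$, or even $\Phi(C_\alpha)=\det(H(\alpha))$ as operators on $\mathcal D$.

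Using \eqref{eq: C alph product}, I would first factor $\det H(\alpha)$ into blocks $\det[\hhat_1,\dots,\hhat_{a-1},\hb_a]$. On the symmetric function side, I would need a matching factorization of $C_\alpha[Y-Z]$ in the $h[Y]h[Z]$ basis that is compatible with the ordering $\h_\lambda\hhat_\mu$.

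The next step is to rewrite each block in terms of $\h$ and $\hhat$ via $\hb_i=\h_i-\hhat_i$. Expanding the determinant along the last column gives
\[
\det[\hhat_1,\dots,\hhat_{a-1},\hb_a]=\det[\hhat_1,\dots,\hhat_{a-1},\h_a]-\det[\hhat_1,\dots,\hhat_{a}],
\]
and the last term is the Jacobi--Trudi-type determinant for $e_a$ (up to sign) in the $\hhat$'s. Then I would use the commutation relations \eqref{eq: sw} and \eqref{eq: qsw} to move all $\h$ factors to the left of all $\hhat$ factors. The relation \eqref{eq: qsw} is what introduces the $q$-dependence. It should mirror, on the algebraic side, the plethystic commutation $\mathbf C_a$ satisfies, namely the known relation
\[
\mathbf C_a\mathbf C_b\text{-type identities}\quad q(\mathbf C_{a}\mathbf C_{b}\ldots)
\]
(Haglund--Morse--Zabrocki's relations among $\mathbf C$ and $\mathbf B$ operators). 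Concretely, I would set up a formal correspondence between words in $\h,\hhat,\hb$ modulo \eqref{eq: sw} and \eqref{eq: qsw}, and words in $h[Y]$, $h[Z]$. I would then check that the normal-ordered expression of $\det H(\alpha)$ equals $\Phi(C_\alpha)$.

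The main obstacle is precisely this normal-ordering step. One must show that reducing $\det H(\alpha)$ with \eqref{eq: sw} and \eqref{eq: qsw} produces exactly the coefficients $c_{\lambda,\mu}$ of $C_\alpha[Y-Z]$. I would attempt it by induction on the number of parts of $\alpha$. The induction applies one factor $\mathbf C_a$ at a time, using the plethystic definition
\[
\mathbf C_a f[X]=(-q)^{1-a}f\bigl[X-\tfrac{q-1}{qz}\bigr]\textstyle\sum z^m h_m[X]\big|_{z^a}.
\]
Substituting $X=Y-Z$, this suggests that adding a new block corresponds to inserting an $\h$/$\hhat$ generating series, with the shift by $(q-1)/(qz)$ realized by commuting it past earlier $\hhat$'s via \eqref{eq: qsw}. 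If this direct matching proves unwieldy, I would try a fallback. The fallback is to prove the identity on the $s_\lambda$ or $e_\lambda$ basis by deriving Jacobi--Trudi formulas from \eqref{eq: sw} and \eqref{eq: qsw}, and comparing with Lemma~\ref{lem: C alpha operator expression} only for $\alpha=(k)$ and hook-like cases, where the determinant is simple.
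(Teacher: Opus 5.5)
There is a genuine gap. Your first paragraph is a correct reduction: the $C_\alpha$ span $\Lambda_k$, and by Lemma~\ref{lem: C alpha operator expression} it suffices to prove $\Omega(\Phi(C_\alpha)\cdot 1)=\Omega(\det H(\alpha)\cdot 1)$. That target is in fact true, as follows a posteriori from the proposition. But it carries the entire content of the statement, and you give a wish list rather than an argument.

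The ingredients you name do not work as stated:
\begin{itemize}
\item There is no factorization of $C_\alpha[Y-Z]$ matching \eqref{eq: C alph product}. The operator $\mathbf{C}_a$ involves the shift $X\mapsto X-\frac{q-1}{qz}$; already $C_{(1,1)}=h_1^2-(1-q^{-1})h_2$ is not a multiple of $C_{(1)}^2=h_1^2$. So on the operator side the shift can only come from noncommutativity between the blocks of $\det H(\alpha)$, and you would have to show how.
\item The displayed ``$\mathbf{C}_a\mathbf{C}_b$-type identity'' is a placeholder, not a relation.
\item The inductive step, turning the $z$-shift into commutations via \eqref{eq: qsw}, is never carried out.
\end{itemize}
This is exactly the route the paper flags as obstructed. $\Phi(C_\alpha)$ has no manageable description. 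Also, $\det H(\alpha)$ contains monomials with $\hhat_i$'s to the left of $\hb_j$'s, whereas every monomial of $\Phi(f)$ has all $\h$'s to the left of all $\hhat$'s. A global normal-ordering argument would therefore be needed, and you give none. Your stronger operator-level target $\Phi(C_\alpha)=\det H(\alpha)$ would additionally force the $\det H(\alpha)$ to obey every linear relation among the linearly dependent $C_\alpha$, which you do not address.

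Your fallback does not close the gap either. The only bridge between $f[-MX^{m,n}]\cdot 1$ and the operators is Lemma~\ref{lem: C alpha operator expression}. One-part compositions give only $C_{(k)}=(-q)^{1-k}h_k$, and $f\mapsto f[-MX^{m,n}]\cdot 1$ is not multiplicative. So Jacobi--Trudi identities for the operators plus a few special $\alpha$ cannot determine $s_\lambda[-MX^{m,n}]\cdot 1$.

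The missing idea is the paper's, in three parts.
\begin{itemize}
\item Work in the Schur basis. There $\Phi(s_\lambda)=\det J(\lambda)$ is a purely symmetric-function Laplace expansion (Lemma~\ref{lem: Schur Jacobi Trudi}) needing no commutations, because the $\h$-columns of $J(\lambda)$ already precede its $\hhat$-columns.
\item Prove by induction on $\ell(\lambda)$ that $s_\lambda=\sum_\alpha d_{\lambda,\alpha}C_\alpha$ and $\det J(\lambda)=\sum_\alpha d_{\lambda,\alpha}\det H(\alpha)$ with the \emph{same} coefficients. The symmetric-function side is the HMZ recursion $s_\lambda=(-q)^{\lambda_1-1}\sum_{i\ge 0}\mathbf{C}_{\lambda_1+i}(e_i^\perp s_\mu)$ together with $s_\lambda=\sum_{i\ge0}(-1)^i h_{\lambda_1+i}e_i^\perp s_\mu$.
\item The operator side is the Pieri-type identity of Lemma~\ref{lem: tech3}: $\det J(\nu)\det([\hhat_1,\dots,\hhat_{k-1},\hb_k])=(-1)^{k-1}\sum_\tau q^{1-\tau_1}\det J(\tau)$, summed over $\tau$ obtained from $\nu$ by adding a horizontal strip of size $k\ge\nu_1$. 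It is derived from \eqref{eq: sw} and \eqref{eq: qsw} through the column-moving Lemmas~\ref{lem: tech1} and~\ref{lem: tech2}.
\end{itemize}
This way only the specific combinations $\sum_\alpha d_{\lambda,\alpha}\det H(\alpha)$ ever need to be straightened, and the determinantal structure makes that straightening close up.
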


A direct approach to proving Proposition~\ref{prop: uniform description} would be to verify the claim for $f=C_{\alpha}$ by exploiting Lemma~\ref{lem: C alpha operator expression}. However, this approach faces two obstacles: first, the description of $\Phi(C_{\alpha})$ is complicated; and second, the right-hand side of \eqref{eq: C alpha operator} contains monomials where $\hhat_i$ operators may appear to the left of $\hb_i$ operators. In contrast, every monomial in the definition of $\Phi(f)$ has the $\hhat_i$ operators positioned to the right of the $\h_i$ operators.

Instead, we prove Proposition~\ref{prop: uniform description} by establishing the claim for $f=s_{\lambda}$. We begin with an elegant description of $\Phi(s_{\lambda})$, which also yields a fruitful consequence: an elementary proof of the Loehr--Warrington conjecture.

For a partition $\lambda=(\lambda_1,\lambda_2,\dots,\lambda_{\ell})$, we define a matrix of operators $J(\lambda)$ as follows:
\begin{equation*}
    J(\lambda)=[\h_{\lambda_{\ell}},\h_{\lambda_{\ell-1}+1},\dots,\h_{\lambda_{1}+\ell-1},\hhat_{\ell},\hhat_{\ell+1},\dots,\hhat_{\lambda_{1}+\ell-1}].
\end{equation*}
For example, for $\lambda=(2,2)$, we have $J((2,2))=[\h_2,\h_3,\hhat_2,\hhat_3]$.

\begin{lem}\label{lem: Schur Jacobi Trudi}
    For a partition $\lambda$, we have
    \begin{equation*}
        \Phi(s_{\lambda})=\det(J(\lambda)).
    \end{equation*}
\end{lem}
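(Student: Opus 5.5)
The plan is to reduce the lemma to a purely commutative determinantal identity, and then prove that identity by a Schur-complement (Jacobi--Trudi) argument.

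\emph{Reduction to a commutative identity.} Consider the polynomial ring $\mathbb{Q}(q,t)[h_i[Y],h_j[Z]]$ in commuting indeterminates $h_1[Y],h_2[Y],\dots$ and $h_1[Z],h_2[Z],\dots$. Define a linear map $\Theta$ from it to operators by $h_\lambda[Y]h_\mu[Z]\mapsto \h_\lambda\hhat_\mu$, placing all $\h$'s to the left of all $\hhat$'s.
\begin{itemize}
\item $\Theta$ is well defined, because the $\h_i$ pairwise commute and the $\hhat_j$ pairwise commute by \eqref{eq: sw}.
\item By definition $\Phi(f)=\Theta(f[Y-Z])$, using that $h_\lambda[Y]h_\mu[Z]$ is a basis.
\item In $J(\lambda)$ the first $\ell$ columns consist of $\h$'s and the last $\lambda_1$ columns of $\hhat$'s, with the conventions $\h_0=\hhat_0=1$ and negative indices equal to $0$. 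Hence every monomial $A_{w_1,1}\cdots A_{w_{\ell+\lambda_1},\ell+\lambda_1}$ in $\det(J(\lambda))$ already has its $\h$'s to the left of its $\hhat$'s.
\end{itemize}
Therefore $\det(J(\lambda))=\Theta(\det J^{c}(\lambda))$, where $J^c(\lambda)$ is the commutative matrix obtained by replacing $\h_i\mapsto h_i[Y]$ and $\hhat_i\mapsto h_i[Z]$. It then suffices to prove
\[
s_\lambda[Y-Z]=\det J^c(\lambda).
\]

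\emph{Proving the commutative identity.} Write $N=\ell+\lambda_1$. The last $\lambda_1$ columns of $J^c(\lambda)$ have entries $h_{\ell+j-1-(i-1)}[Z]$ for $1\le j\le\lambda_1$. In the bottom $\lambda_1$ rows these form a block $D$ that is upper unitriangular Toeplitz, since its diagonal entries are $h_0[Z]=1$ and the entries below the diagonal vanish. Hence $\det D=1$, and $D^{-1}$ is the Toeplitz matrix of the coefficients of $H_Z(u)^{-1}=\sum_j(-1)^je_j[Z]u^j$.

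Write $J^c=\begin{pmatrix}A&B\\ C&D\end{pmatrix}$, with $A$ of size $\ell\times\ell$ (the $h[Y]$ columns in the top rows). Take the determinant by the Schur complement, $\det J^c=\pm\det(A-BD^{-1}C)$, being careful with the block placement and the resulting sign. Then show that the entries of $A-BD^{-1}C$ are $\sum_j h_{k-j}[Y](-1)^je_j[Z]=h_k[Y-Z]$ with the appropriate indices $k$. Concretely, the $(i,r)$ entry should be $h_{\lambda_{\ell+1-r}+r-i}[Y-Z]$. This is the transposed and reversed Jacobi--Trudi matrix for $s_\lambda[Y-Z]$, so the ordinary Jacobi--Trudi formula applied to the alphabet $Y-Z$ finishes the proof.

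A sanity check at $\lambda=(1)$: here $J=\begin{pmatrix}\h_1&\hhat_1\\1&1\end{pmatrix}$, so $\det J=\h_1-\hhat_1$, which matches $s_1[Y-Z]=h_1[Y]-h_1[Z]$.

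\emph{Main obstacle.} I expect the hard part to be the index and sign bookkeeping in the Schur-complement step: verifying that the lower-left block $C$ of $h[Y]$ entries, after multiplication by $D^{-1}$, truncates exactly to produce $h_k[Y-Z]$, and that the overall sign is $+1$. If the block manipulation becomes messy, I would instead argue via the generating-function factorization $H_Y(u)H_Z(u)^{-1}$ and a Lindström--Gessel--Viennot style lattice-path model for the $N\times N$ determinant. Alternatively, I would induct on $\lambda_1$, expanding along the last column and comparing with the Pieri-type recursion for supersymmetric Schur functions.
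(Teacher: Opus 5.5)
Your reduction is correct and is the paper's first step: the map $\Theta$ just spells out why it suffices to prove $s_\lambda[Y-Z]=\det J^c(\lambda)$. It uses same-type commutation from \eqref{eq: sw} and the fact that every monomial of $\det J(\lambda)$ has its $\h$'s to the left of its $\hhat$'s.

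\textbf{The gap.} It sits in the step you flagged as the main obstacle. The entries of the Schur complement $A-BD^{-1}C$ are \emph{not} $h_{\lambda_{\ell+1-r}+r-i}[Y-Z]$, so that verification would fail. Already for $\lambda=(1,1)$ you get
\[
J^c=\begin{pmatrix}h_1[Y]&h_2[Y]&h_2[Z]\\ 1&h_1[Y]&h_1[Z]\\ 0&1&1\end{pmatrix},\qquad A-BD^{-1}C=\begin{pmatrix}h_1[Y]&h_2[Y]-h_2[Z]\\ 1&h_1[Y]-h_1[Z]\end{pmatrix},
\]
whose first row is not $(h_1[Y-Z],h_2[Y-Z])$.

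In general, set $n_r=\lambda_{\ell+1-r}+r$ and expand $h_n[Y]=\sum_{b\ge0}h_b[Z]\,h_{n-b}[Y-Z]$. One finds $(D^{-1}C)_{j,r}=h_{n_r-\ell-j}[Y-Z]$. So $BD^{-1}C$ removes only the terms with $b\ge \ell-i+1$, and
\[
(A-BD^{-1}C)_{i,r}=\sum_{b=0}^{\ell-i}h_b[Z]\,h_{n_r-i-b}[Y-Z].
\]
That is, $A-BD^{-1}C=UM$, where $U=(h_{k-i}[Z])_{1\le i,k\le\ell}$ is upper unitriangular and $M=(h_{n_r-i}[Y-Z])_{1\le i,r\le\ell}$. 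The determinant you want is still there, but only once you recognize this extra unitriangular factor. The truncation you hoped for happens only in the bottom row $i=\ell$. Incidentally, the sign is exactly $+1$, since $\det D=1$ and the blocks are in standard position.

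\textbf{The repair.} Skip the Schur complement and left-multiply all of $J^c$ by the $N\times N$ upper unitriangular Toeplitz matrix $E=\bigl((-1)^{k-i}e_{k-i}[Z]\bigr)_{1\le i,k\le N}$.
\begin{itemize}
\item Since $\ell+j\le N$, each $Z$-column $\ell+j$ becomes the standard basis vector in position $\ell+j$.
\item Since $n_r\le N$, each $Y$-column becomes $(h_{n_r-i}[Y-Z])_{1\le i\le N}$, with no truncation.
\end{itemize}
Hence $\det J^c=\det(EJ^c)=\det M$. Reversing the rows and columns of $M$ and transposing gives $(h_{\lambda_i-i+j}[Y-Z])$, so Jacobi--Trudi in the alphabet $Y-Z$ finishes the proof.

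\textbf{Comparison with the paper.} With this fix, your argument is a different and shorter route. The paper writes $s_\lambda=s_{R/\bar\lambda}$ for the rectangle $R=(\lambda_1^{\ell})$ and expands $s_{R/\bar\lambda}[Y-Z]=\sum_{\mu\subseteq R}s_{\mu/\bar\lambda}[Y]\,s_{R/\mu}[-Z]$. It then matches each term with a pair of complementary minors in the Laplace expansion along the first $\ell$ columns, checking signs along the way. Your version avoids skew shapes and sign bookkeeping. The paper's gives an explicit minor-by-minor correspondence with skew Schur functions, which is the form it reuses for the skew shapes $R(j)$ in Lemma~\ref{lem: tech3}.
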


\begin{proof}
    Let $K(\lambda)$ be the matrix obtained by replacing each $\h_i$ with $h_i[Y]$ and each $\hhat_i$ with $h_i[Z]$ in $J(\lambda)$. It suffices to show that $s_{\lambda}[Y-Z]=\det(K(\lambda))$.

    Let $\lambda=(\lambda_1,\dots,\lambda_{\ell})$. Define a rectangle $R=(\underbrace{\lambda_1,\dots,\lambda_1}_{\ell})$ and a partition $\bar{\lambda}=(\lambda_1-\lambda_{\ell}, \lambda_1-\lambda_{\ell-1}, \dots, \lambda_{1}-\lambda_{1})$. Since $s_{\lambda}=s_{R/\overline{\lambda}}$ we have
    \begin{equation*}
        s_{\lambda}[Y-Z] = s_{R/\bar{\lambda}}[Y-Z] = \sum_{\mu\subseteq R} s_{\mu/\bar{\lambda}}[Y] s_{R/\mu}[-Z] = \sum_{\mu\subseteq R} (-1)^{|R/\mu|} s_{\mu/\bar{\lambda}}[Y] s_{(R/\mu)^{t}}[Z].
    \end{equation*}

    On the other hand, by a cofactor expansion along the first $\ell$ columns, we have
    \begin{equation*}
        \det(K(\lambda)) = \sum_{\substack{A \subseteq [\lambda_1+\ell] \\ |A|=\ell}} (-1)^{\sum_{a\in A} a - \frac{\ell(\ell+1)}{2}} \det(L_A) \det(L'_{A^{c}}),
    \end{equation*}
    where $L_{A}$ is the submatrix formed by the first $\ell$ columns and the rows indexed by $A$, and $L'_{A^c}$ is the submatrix formed by the last $\lambda_1$ columns and the rows indexed by $A^{c}=[\lambda_1+\ell] \setminus A$.
    
    There is a bijection between the set of partitions $\mu\subseteq R$ and the set of subsets $A\subseteq[\lambda_1+\ell]$ with $|A|=\ell$, given by:
    \begin{equation*}
        \mu=(\mu_1,\dots,\mu_{\ell}) \longmapsto A=\{\lambda_1-\mu_1+1, \lambda_1-\mu_2+2, \dots, \lambda_1-\mu_\ell+\ell\}.
    \end{equation*}
    Under this correspondence, the Jacobi-Trudi formula implies
    \begin{equation*}
        s_{\mu/\bar{\lambda}}[Y]=\det(L_A) \quad \text{and} \quad s_{(R/\mu)^{t}}[Z]=s_{(\bar{\mu})^{t}}[Z]=\det(L'_{A^{c}})
    \end{equation*}
    where $\bar{\mu}=(\lambda_1-\mu_{\ell},\lambda_1-\mu_{\ell-1},\dots,\lambda_1-\mu_{1})$.
    Furthermore, the sign matches as $(-1)^{|R/\mu|}=(-1)^{\sum_{a\in A}a - \frac{\ell(\ell+1)}{2}}$.
\end{proof}

\begin{example}
    Let $\lambda=(2,2)$. Then the matrix $J(\lambda)$ is given by
    \begin{equation*}
        J((2,2)) = 
        \begin{pmatrix} 
            \h_2 & \h_3 & \hhat_2 & \hhat_3 \\
            \h_1 & \h_2 & \hhat_1 & \hhat_2 \\
            1    & \h_1 & 1       & \hhat_1 \\
            0    & 1    & 0       & 1
        \end{pmatrix}.
    \end{equation*}
    By performing a cofactor expansion along the first two columns, $\det(J(\lambda))$ is given by:
    \begin{align}
       \det(J(\lambda)) &= 
        \det\begin{pmatrix} \h_2 & \h_3 \\ \h_1 & \h_2 \end{pmatrix} \det\begin{pmatrix} 1 & \hhat_1 \\ 0 & 1 \end{pmatrix} 
        - \det\begin{pmatrix} \h_2 & \h_3 \\ 1 & \h_1 \end{pmatrix} \det\begin{pmatrix} \hhat_1 & \hhat_2 \\ 0 & 1 \end{pmatrix} \nonumber\\
        &\quad + \det\begin{pmatrix} \h_2 & \h_3 \\ 0 & 1 \end{pmatrix} \det\begin{pmatrix} \hhat_1 & \hhat_2 \\ 1 & \hhat_1 \end{pmatrix}
        + \det\begin{pmatrix} \h_1 & \h_2 \\ 1 & \h_1 \end{pmatrix} \det\begin{pmatrix} \hhat_2 & \hhat_3 \\ 0 & 1 \end{pmatrix} \nonumber\\
        &\quad - \det\begin{pmatrix} \h_1 & \h_2 \\ 0 & 1 \end{pmatrix} \det\begin{pmatrix} \hhat_2 & \hhat_3 \\ 1 & \hhat_1 \end{pmatrix} 
        + \det\begin{pmatrix} 1 & \h_1 \\ 0 & 1 \end{pmatrix} \det\begin{pmatrix} \hhat_2 & \hhat_3 \\ \hhat_1 & \hhat_2 \end{pmatrix}.\label{llll1}
    \end{align}

    We also have
    \begin{align*}
        s_{(2,2)}[Y-Z] &= \sum_{\mu\subseteq (2,2)} s_{\mu}[Y] s_{(2,2)/\mu}[-Z] 
        = \sum_{\mu\subseteq (2,2)} (-1)^{|(2,2)/\mu|} s_{\mu}[Y] s_{\left((2,2)/\mu\right)^{t}}[Z] \\
        &= s_{(2,2)}[Y]s_{\emptyset}[Z] - s_{(2,1)}[Y]s_{(1)}[Z] + s_{(2)}[Y]s_{(1,1)}[Z] s_{(1,1)}[Y]s_{(2)}[Z] - s_{(1)}[Y]s_{(2,1)}[Z] + s_{\emptyset}[Y]s_{(2,2)}[Z].
    \end{align*}
    Applying the Jacobi-Trudi formula to each term, we conclude:
    \begin{align}
        s_{(2,2)}[Y-Z] &= 
        \det\begin{pmatrix} h_2[Y] & h_3[Y] \\ h_1[Y] & h_2[Y] \end{pmatrix} \det\begin{pmatrix} 1 & h_1[Z] \\ 0 & 1 \end{pmatrix} 
        - \det\begin{pmatrix} h_2[Y] & h_3[Y] \\ 1 & h_1[Y] \end{pmatrix} \det\begin{pmatrix} h_1[Z] & h_2[Z] \\ 0 & 1 \end{pmatrix} \nonumber\\
        &\quad + \det\begin{pmatrix} h_2[Y] & h_3[Y] \\ 0 & 1 \end{pmatrix} \det\begin{pmatrix} h_1[Z] & h_2[Z] \\ 1 & h_1[Z] \end{pmatrix}
        + \det\begin{pmatrix} h_1[Y] & h_2[Y] \\ 1 & h_1[Y] \end{pmatrix} \det\begin{pmatrix} h_2[Z] & h_3[Z] \\ 0 & 1 \end{pmatrix} \nonumber\\
        &\quad - \det\begin{pmatrix} h_1[Y] & h_2[Y] \\ 0 & 1 \end{pmatrix} \det\begin{pmatrix} h_2[Z] & h_3[Z] \\ 1 & h_1[Z] \end{pmatrix}
        + \det\begin{pmatrix} 1 & h_1[Y] \\ 0 & 1 \end{pmatrix} \det\begin{pmatrix} h_2[Z] & h_3[Z] \\ h_1[Z] & h_2[Z] \end{pmatrix}.\label{llll2}
    \end{align}
    Comparing \eqref{llll1} and \eqref{llll2}, we see that Lemma~\ref{lem: Schur Jacobi Trudi} holds for $\lambda=(2,2).$
\end{example}

To prove Proposition~\ref{prop: uniform description}, we establish several technical lemmas.

\begin{lem}\label{lem: tech1}
    Let $v$ and $w$ be any sequences of operators chosen from the set $\{\h_i, \hb_i, \hhat_i\}_{i\in \mathbb{Z}}$. For positive integers $a$ and $\ell$, we have
    \begin{equation*}
        -q\det([v,\hhat_a,\hb_{a+\ell}, w]) = \det([v,\hb_{a+\ell},\hhat_a, w]) + (1-q)\sum_{i=1}^{\ell-1}\det([v,\hb_{a+\ell-i},\hhat_{a+i}, w]).
    \end{equation*}
\end{lem}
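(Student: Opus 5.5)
The plan is to reduce the identity to a relation among $2\times 2$ ``minors'' of the two adjacent columns, and then get that relation from \eqref{eq: qsw} by telescoping.

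\textbf{Reduction to a local identity.} In $\det(A)=\sum_{w}(-1)^w A_{w_1,1}\cdots A_{w_\ell,\ell}$ the operators appear in column order. The prefix $v$ and suffix $w$ are identical on both sides. Fix the rows assigned to all columns other than the two distinguished adjacent ones, say columns $j,j+1$. Then only a pair of rows $r<s$ remains for these two columns. Pairing $w$ with $w\circ(j\ j{+}1)$, the contribution of columns $j,j+1$ becomes a signed minor inserted between the fixed left and right operator products. It is therefore enough to prove the identity minor by minor, for every pair $r<s$, with the same sign on both sides.

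Put $p=a-r+1$ and $d=s-r\ge 1$. Since each column's indices drop by one per row, the minors needed are the following.
\begin{align*}
M_{\mathrm{L}} &= \hhat_p\,\hb_{p+\ell-d}-\hhat_{p-d}\,\hb_{p+\ell}, \\
M_0 &= \hb_{p+\ell}\,\hhat_{p-d}-\hb_{p+\ell-d}\,\hhat_{p}, \\
M_i &= \hb_{p+\ell-i}\,\hhat_{p+i-d}-\hb_{p+\ell-i-d}\,\hhat_{p+i}, \qquad 1\le i\le \ell-1.
\end{align*}
Here $M_{\mathrm{L}}$ comes from $[\hhat_a,\hb_{a+\ell}]$, $M_0$ from $[\hb_{a+\ell},\hhat_a]$, and $M_i$ from $[\hb_{a+\ell-i},\hhat_{a+i}]$. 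The degenerate conventions $\h_0=\hhat_0=1$, $\hb_0=0$, and vanishing for negative indices are built in. The target is
\[
-q\,M_{\mathrm{L}} = M_0 + (1-q)\sum_{i=1}^{\ell-1} M_i .
\]

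\textbf{Telescoping with \eqref{eq: qsw}.} For fixed total index $N=2p+\ell-d$, set $X_j=\hhat_j\hb_{N-j}$ and $Y_j=\hb_{N-j}\hhat_j$. Relation \eqref{eq: qsw}, with $b=j$ and $N-j=a'$, reads
\[
q\,(X_j - X_{a'-1}) = Y_j - Y_{a'-1}.
\]
So for each index pair $\{j,N-j-1\}$ one $X$-difference converts into a $Y$-difference at the cost of a factor $q$. Every minor above is a difference $Y_j-Y_{j'}$ (or $X_j-X_{j'}$) with $j+j'$ fixed. I would then write
\[
M_0 = \sum_{i=0}^{\ell-1}\bigl(Y_{p-d+i}-Y_{p-d+i+1}\bigr)
\]
as a chain of ``elementary'' steps. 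Each elementary $Y$-step is paired via \eqref{eq: qsw} with the matching $X$-step. The sum $\sum_i M_i$ is then a reindexed version of the same chain, namely partial sums taken from both ends. Collecting terms should leave exactly $-q$ times the outer $X$-difference, which is $-qM_{\mathrm{L}}$; the $(1-q)$ coefficient arises because each interior step appears once in $q$-converted form and once unconverted. Concretely, I would prove the identity by induction on $\ell$. The case $\ell=1$ is precisely \eqref{eq: qsw} after rearranging: the right side of \eqref{eq: qsw} equals $-M_0$ and the left side equals $q M_{\mathrm L}$ up to sign. For the step $\ell\to\ell+1$, I would apply \eqref{eq: qsw} once to the new outermost pair and absorb the discrepancy into the new term $M_\ell$.

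\textbf{Expected obstacle.} The main risk is bookkeeping. The minors $M_i$ pair indices $(p+\ell-i,\,p+i-d)$ and $(p+\ell-i-d,\,p+i)$, whose sums are both $N$, but the order in which they feed into \eqref{eq: qsw} has to be matched exactly. In addition, boundary cases where an index becomes $\le 0$ must still satisfy \eqref{eq: qsw} under the degenerate conventions, for instance $\hb_0=0$ while $\hhat_0=1$. I would first check \eqref{eq: qsw} in these degenerate cases, for example $a=1$ giving $q\,\hhat_b\hb_1=\hb_1\hhat_b-\hb_{b+1}+q\,\hb_{b+1}$; if needed this can be verified directly from the definition of the operators. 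Only then would I run the induction.
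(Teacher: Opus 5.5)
Your reduction to $2\times 2$ minors, your formulas for $M_{\mathrm L},M_0,M_i$, and your base case $\ell=1$ are correct and agree with the paper. The gap is in the step from $\ell$ to $\ell+1$, and equally in your telescoping: it cannot be done with \eqref{eq: qsw} alone.

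In your notation, with $N=2p+\ell-d$,
\[
M_{\mathrm L}=X_p-X_{p-d},\qquad M_0=Y_{p-d}-Y_p,\qquad M_i=Y_{p+i-d}-Y_{p+i}.
\]
Every instance of \eqref{eq: qsw} at total index $N$ reads $q(X_j-X_{N-1-j})=Y_j-Y_{N-1-j}$. So any linear combination of such instances has an $X$-part that is antisymmetric under the reflection $j\mapsto N-1-j$. The left side $-q(X_p-X_{p-d})$ has index sum $N-\ell$, so for $\ell\ge 2$ it is not of this form. For example, take $\ell=2$ and $d=1$, so $N=2p+1$. Then $X_p$ is fixed by the reflection, yet it carries coefficient $-q$.

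Hence the identity is not a formal consequence of \eqref{eq: qsw}, and no bookkeeping will ``absorb the discrepancy into $M_\ell$''. The same obstruction breaks your telescoping: an elementary step $Y_j-Y_{j+1}$ is an instance of \eqref{eq: qsw} only when $2j+1=N-1$. There is also an indexing slip: $\sum_{i=0}^{\ell-1}(Y_{p-d+i}-Y_{p-d+i+1})=Y_{p-d}-Y_{p-d+\ell}$, which equals $M_0$ only when $d=\ell$.

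The missing ingredient is the commutativity \eqref{eq: sw} together with $\h_i=\hhat_i+\hb_i$. Expand $\h_j\h_{N-j}=\h_{N-j}\h_j$ and cancel the $\hhat\hhat$ and $\hb\hb$ terms; this gives $X_j-X_{N-j}=Y_j-Y_{N-j}$. That is an unweighted exchange across a second reflection, $j\mapsto N-j$. Composing the two reflections gives the unit shift $j\mapsto j+1$, which is what the induction needs.

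The induction must also keep $N$ fixed. If you pass from $\ell$ to $\ell+1$ with $p,d$ fixed, $N$ changes and the two identities share no terms. The paper instead compares the instance $(p,d,\ell)$ with $(p+1,d+1,\ell-1)$, i.e.\ $a\mapsto a+1$ and lower row $s\mapsto s+1$. Subtracting leaves only
\[
q(X_{p+1}-X_p)=(Y_{p+1}-Y_p)+(1-q)(Y_{N-1-p}-Y_{p+1}).
\]
This follows from \eqref{eq: qsw} in the form $q(X_p-X_{N-1-p})=Y_p-Y_{N-1-p}$, plus the consequence of \eqref{eq: sw} that $X_{p+1}-X_{N-1-p}=Y_{p+1}-Y_{N-1-p}$.
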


\begin{proof}
    By linearity, it suffices to show that for any row indices $j < k$, the following $2 \times 2$ determinant identity holds:
    \begin{equation}\label{eq: induction target}
        -q\det\begin{pmatrix}
            \hhat_{a-j} & \hb_{a+\ell-j}\\
            \hhat_{a-k} & \hb_{a+\ell-k}
        \end{pmatrix}
        = \det\begin{pmatrix}
            \hb_{a+\ell-j} & \hhat_{a-j} \\
            \hb_{a+\ell-k} & \hhat_{a-k}
        \end{pmatrix} 
        + (1-q)\sum_{i=1}^{\ell-1}\det\begin{pmatrix}
            \hb_{a+\ell-i-j} & \hhat_{a+i-j}\\
            \hb_{a+\ell-i-k} & \hhat_{a+i-k}
        \end{pmatrix}.
    \end{equation}
    We proceed by induction on $\ell$. The base case $\ell=1$ corresponds to \eqref{eq: qsw}.

    Let $\ell > 1$ and assume that \eqref{eq: induction target} holds for $\ell-1$. Specifically, we apply the inductive hypothesis by replacing the parameters $a \to a+1$, $\ell \to \ell-1$, and $k \to k+1$. This yields
    \begin{equation}\label{eq: induction step}
        -q\det\begin{pmatrix}
            \hhat_{a+1-j} & \hb_{a+\ell-j}\\
            \hhat_{a-k} & \hb_{a+\ell-k-1}
        \end{pmatrix}
        = \det\begin{pmatrix}
            \hb_{a+\ell-j} & \hhat_{a+1-j} \\
            \hb_{a+\ell-k-1} & \hhat_{a-k}
        \end{pmatrix} \\
        + (1-q)\sum_{i=1}^{\ell-2}\det\begin{pmatrix}
            \hb_{a+\ell-i-j} & \hhat_{a+1+i-j}\\
            \hb_{a+\ell-i-k-1} & \hhat_{a+i-k}
        \end{pmatrix}.
    \end{equation}
    Subtracting \eqref{eq: induction step} from \eqref{eq: induction target} gives
    \begin{equation}\label{klo}
        -q\hhat_A\hb_B+q\hhat_{A+1}\hb_{B-1}=\hb_{B-1}\hhat_{A+1}-\hb_{B}\hhat_{A}+(1-q)(\hb_{A+1}\hhat_{B-1}-\hb_{B-1}\hhat_{A+1})
    \end{equation}
    where we denote $A = a-j$ and $B = a+\ell-k$. From \eqref{eq: qsw} we have
    \begin{equation}\label{eq: known comm}
        -q(\hhat_{A}\hb_{B} - \hhat_{B-1}\hb_{A+1}) = \hb_{A+1}\hhat_{B-1} - \hb_{B}\hhat_{A},
    \end{equation}
    and we subtract \eqref{eq: known comm} from \eqref{klo} giving
    \begin{equation*}
        q\hhat_{A+1}\hb_{B-1} - q\hhat_{B-1}\hb_{A+1} = q\hb_{B-1}\hhat_{A+1} - q\hb_{A+1}\hhat_{B-1}.
    \end{equation*}
      This identity holds since
    \begin{align*}
        &\hhat_{A+1}\hb_{B-1} - \hhat_{B-1}\hb_{A+1} 
        = (\h_{A+1} - \hb_{A+1})\hb_{B-1} - (\h_{B-1} - \hb_{B-1})\hb_{A+1} =
         \h_{A+1}\hb_{B-1} - \h_{B-1}\hb_{A+1} \\
        &=  \h_{A+1}(\h_{B-1}-\hhat_{B-1}) - \h_{B-1}(\h_{A+1}-\hhat_{A+1})= \hb_{B-1}\hhat_{A+1} - \hb_{A+1}\hhat_{B-1},
    \end{align*}
     completing the inductive step.
\end{proof}

\begin{lem}\label{lem: tech2}
    Let $v$ be a sequence of operators chosen from $\{\h_i, \hb_i, \hhat_i\}_{i\in \mathbb{Z}}$. For positive integers $a$, $\ell$, and $k$, we have
    \begin{multline*}
        (-q)^{\ell}\det([v,\hhat_a,\dots,\hhat_{a+\ell-1},\hb_{a+\ell+k},\hhat_{a+\ell+1},\dots,\hhat_{a+\ell+k}]) \\
        = \det([v,\hb_{a+\ell+k},w^{(0)}]) +\sum_{i=1}^{\ell}(-q)^{i-1}(1-q)\det([v,\hb_{a+\ell+k-i},w^{(i)}]),
    \end{multline*}
    where $w^{(i)}$ is the sequence obtained by removing $\hhat_{a+\ell-i}$ from the sequence $(\hhat_a,\dots,\hhat_{a+\ell+k})$.
\end{lem}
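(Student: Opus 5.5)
Induction on $\ell$ is the natural route: at each stage, use Lemma~\ref{lem: tech1} to move the $\hb$ column one position to the left past a single $\hhat$ column. Only the column ordering and indices matter; the prefix $v$ is inert throughout.

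For $\ell=1$ the matrix is $[v,\hhat_a,\hb_{a+1+k},\hhat_{a+2},\dots,\hhat_{a+1+k}]$. Apply Lemma~\ref{lem: tech1} with the adjacent pair $(\hhat_a,\hb_{a+1+k})$, i.e.\ with index shift $\ell\to k+1$ and with $w=(\hhat_{a+2},\dots,\hhat_{a+1+k})$. This gives
\[
-q\det(\cdot)=\det([v,\hb_{a+1+k},\hhat_a,w])+(1-q)\sum_{i=1}^{k}\det([v,\hb_{a+1+k-i},\hhat_{a+i},w]).
\]
The first term is already $\det([v,\hb_{a+1+k},w^{(0)}])$, because $w^{(0)}$ is the full sequence with the missing index $a+1$ removed. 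The sum is the delicate part. For $i\ge 2$, the column $\hhat_{a+i}$ coincides with a column already present in $w$, so the matrix has two identical columns.

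The key point to check is that such a determinant vanishes even with noncommuting entries. Under the column-ordered definition $\det(A)=\sum_w(-1)^w A_{w_1,1}\cdots A_{w_\ell,\ell}$, two identical columns at positions $p<r$ need not give cancellation directly. The terms for $w$ and $w\circ(p\,r)$ contribute products in which the two operators from those columns appear in swapped row assignments. Because the columns consist of the same operator type $\hhat$, the commutation relation $\hhat_x\hhat_y=\hhat_y\hhat_x$ from \eqref{eq: sw} applies. Using it, I will slide the factor for column $r$ next to the one for column $p$, passing it through the intervening factors. Those factors all commute with it only if they are also $\hhat$'s, which holds here since all columns between them are $\hhat$ columns. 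So terms pair off with opposite signs and the determinant vanishes. I regard this alternating property for consecutive blocks of same-type columns as the main technical step, and I will state it as a small auxiliary lemma.

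Only $i=1$ then survives, giving $(1-q)\det([v,\hb_{a+k},\hhat_{a+1},\dots,\hhat_{a+1+k}])=(1-q)\det([v,\hb_{a+k},w^{(1)}])$, which is the $\ell=1$ statement.

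For the inductive step, assume the claim for $\ell-1$, applied with prefix $v'=(v,\hhat_a)$ and starting index $a+1$. Then
\[
(-q)^{\ell-1}\det([v,\hhat_a,\dots])=\det([v,\hhat_a,\hb_{a+\ell+k},\tilde w^{(0)}])+\sum_{i=1}^{\ell-1}(-q)^{i-1}(1-q)\det([v,\hhat_a,\hb_{a+\ell+k-i},\tilde w^{(i)}]),
\]
where $\tilde w^{(i)}$ removes $\hhat_{a+\ell-i}$ from $(\hhat_{a+1},\dots,\hhat_{a+\ell+k})$. Multiply by $-q$ and apply Lemma~\ref{lem: tech1} to each pair $(\hhat_a,\hb_{a+\ell+k-i})$. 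Each application produces a main term $\det([v,\hb_{a+\ell+k-i},\hhat_a,\tilde w^{(i)}])=\det([v,\hb_{a+\ell+k-i},w^{(i)}])$. It also produces a correction sum $(1-q)\sum_j\det([v,\hb_{a+\ell+k-i-j},\hhat_{a+j},\tilde w^{(i)}])$.

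By the vanishing lemma, a correction term survives only when $\hhat_{a+j}$ is absent from $\tilde w^{(i)}$, which means $a+j=a+\ell-i$, i.e.\ $j=\ell-i$. The surviving term is then $\det([v,\hb_{a+k},\hhat_{a+1},\dots,\hhat_{a+\ell+k}])=\det([v,\hb_{a+k},w^{(\ell)}])$, and it has the same shape for every $i$.

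Collecting coefficients, the main terms give $\det([v,\hb_{a+\ell+k},w^{(0)}])$ and $(-q)^{i-1}(1-q)\det([v,\hb_{a+\ell+k-i},w^{(i)}])$ for $1\le i\le\ell-1$. The $w^{(\ell)}$ term has total coefficient $(1-q)\bigl[1+(1-q)\sum_{i=1}^{\ell-1}(-q)^{i-1}\bigr]$. By the telescoping identity $(1-q)\sum_{i=1}^{\ell-1}(-q)^{i-1}\cdot$ combined with the leading $1$, this should equal $(-q)^{\ell-1}(1-q)$. I will verify this geometric-sum identity explicitly, and the sign convention in it needs care, since a mismatch would indicate an error in tracking which term carries which factor. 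Once it checks out, the two sides match and the induction is complete.
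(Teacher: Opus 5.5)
Your argument has the same structure as the paper's proof. You induct on $\ell$, apply the inductive hypothesis with prefix $(v,\hhat_a)$ and starting index $a+1$, and then apply Lemma~\ref{lem: tech1} to each term. Repeated-column determinants vanish, so only $j=\ell-i$ survives. Your justification of that vanishing via $\hhat_x\hhat_y=\hhat_y\hhat_x$ is more careful than the paper, which simply asserts it.

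The gap is the claim that the surviving term equals $\det([v,\hb_{a+k},w^{(\ell)}])$ for every $i$. That term is actually $\det([v,\hb_{a+k},\hhat_{a+\ell-i},\tilde w^{(i)}])$, in which the column $\hhat_{a+\ell-i}$ stands in front of $\hhat_{a+1},\dots,\hhat_{a+\ell-i-1}$. Moving it into its sorted position takes $\ell-1-i$ adjacent swaps of $\hhat$-columns. Each swap negates the determinant: pair $w$ with $w\circ(p\;p+1)$ and use the same commutation relation as in your vanishing lemma. Hence the surviving term is $(-1)^{\ell-1-i}\det([v,\hb_{a+k},w^{(\ell)}])$, which for $i=0$ carries the factor $(-1)^{\ell-1}$.

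These signs are exactly what your coefficient check is missing, and without them the identity you plan to verify is false. Already for $\ell=2$, your expression $(1-q)\bigl[1+(1-q)\sum_{i=1}^{\ell-1}(-q)^{i-1}\bigr]$ equals $(1-q)(2-q)$, not $-q(1-q)$, so the induction does not close as written. With the signs included, $(-q)^{i-1}(-1)^{\ell-1-i}=-(-1)^{\ell-1}q^{i-1}$. The total coefficient of $\det([v,\hb_{a+k},w^{(\ell)}])$ then becomes
\[
(1-q)(-1)^{\ell-1}\Bigl[1-(1-q)\sum_{i=1}^{\ell-1}q^{i-1}\Bigr]=(1-q)(-1)^{\ell-1}q^{\ell-1}=(-q)^{\ell-1}(1-q).
\]
This is the required coefficient of the $i=\ell$ term, and it is precisely how the paper's proof concludes.
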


\begin{proof}
    We proceed by induction on $\ell$. For the base case $\ell=1$, let $v' = (\hhat_{a+2},\dots,\hhat_{a+k+1})$ then the LHS is $-q\det([v,\hhat_a,\hb_{a+k+1},v'])$.
    By Lemma~\ref{lem: tech1}, we have
    \begin{equation*}
        -q\det([v,\hhat_a,\hb_{a+k+1},v']) = \det([v,\hb_{a+k+1},\hhat_a,v']) + (1-q)\sum_{j=1}^{k}\det([v,\hb_{a+k+1-j},\hhat_{a+j},v']).
    \end{equation*}
    Observe that for $2\leq j\leq k$, the sequence $v'$ already contains the operator $\hhat_{a+j}$. Consequently, the determinant $\det([v,\hb_{a+k+1-j},\hhat_{a+j},v'])$ vanishes for these values of $j$. The only non-zero term in the sum corresponds to $j=1$. Thus, we conclude
    \begin{equation*}
        -q\det([v,\hhat_a,\hb_{a+k+1},v']) = \det([v,\hb_{a+k+1},\hhat_a,v']) + (1-q)\det([v,\hb_{a+k},\hhat_{a+1},v']).
    \end{equation*}
    Noting that $(\hhat_a, v') = w^{(0)}$ and $(\hhat_{a+1}, v') = w^{(1)}$, this matches the claim for $\ell=1$.

    Let $\ell > 1$ and assume the claim holds for $\ell-1$. Applying the inductive hypothesis to the sequence starting with $(v, \hhat_a)$, we have
    \begin{align}
        &(-q)^{\ell}\det([v,\hhat_a,\dots,\hhat_{a+\ell-1},\hb_{a+\ell+k},\hhat_{a+\ell+1},\dots,\hhat_{a+\ell+k}]) \nonumber\\
        &\quad = -q\det([v,\hhat_a,\hb_{a+\ell+k},u^{(0)}]) + \sum_{i=1}^{\ell-1}(-q)^{i}(1-q)\det([v,\hhat_{a},\hb_{a+\ell+k-i},u^{(i)}]),\label{ui}
    \end{align}
    where $u^{(i)}$ is the sequence obtained by removing $\hhat_{a+\ell-i}$ from $(\hhat_{a+1},\dots,\hhat_{a+\ell+k})$.

    Next, we apply Lemma~\ref{lem: tech1} to each term in the expression above.
    By Lemma~\ref{lem: tech1}, we have
    \begin{align*}
        -q \det([v,\hhat_{a},\hb_{a+\ell+k-i},u^{(i)}]) 
        = \det([v,\hb_{a+\ell+k-i},\hhat_{a},u^{(i)}]) 
        + (1-q)\sum_{j=1}^{\ell+k-i-1}\det([v,\hb_{a+\ell+k-i-j},\hhat_{a+j},u^{(i)}]).
    \end{align*}
    Similar to the base case, the term $\det([v,\hb_{a+\ell+k-i-j},\hhat_{a+j},u^{(i)}])$ vanishes unless $\hhat_{a+j}$ is the specific element missing from $u^{(i)}$, which is $\hhat_{a+\ell-i}$. This occurs when $j = \ell-i$. We conclude
    \begin{align*}
          -q \det([v,\hhat_{a},\hb_{a+\ell+k-i},u^{(i)}])=\det([v,\hb_{a+\ell+k-i},\hhat_{a},u^{(i)}]) + (1-q)\det([v,\hb_{a+k},\hhat_{a+\ell-i},u^{(i)}]).
    \end{align*}
    Identifying the sequences, we have $(\hhat_a, u^{(i)}) = w^{(i)}$ and $(\hhat_{a+\ell-i}, u^{(i)}) \cong w^{(\ell)}$ (up to a sign change of $(-1)^{\ell-1-i}$ required to reorder the columns).
    Thus
    \begin{equation*}
        -q \det([v,\hhat_{a},\hb_{a+\ell+k-i},u^{(i)}]) = \det([v,\hb_{a+\ell+k-i},w^{(i)}]) + (-1)^{\ell-1-i}(1-q)\det([v,\hb_{a+k},w^{(\ell)}]).
    \end{equation*}
    Substituting these expansions back into \eqref{ui} establishes the claim for $\ell$.
\end{proof}

\begin{lem}\label{lem: tech3}
    For a partition $\lambda=(\lambda_1,\lambda_2,\dots,\lambda_{\ell})$ and a positive integer $k\geq \lambda_1$, we have
    \begin{equation*}
        \det(J(\lambda))\det([\hhat_1,\dots,\hhat_{k-1},\hb_{k}]) = (-1)^{k-1}\sum_{\mu} \frac{\det(J(\mu))}{q^{\mu_1-1}},
    \end{equation*}
    where the sum runs over all partitions $\mu$ that can be obtained by adding a horizontal strip of size $k$ to $\lambda$.
\end{lem}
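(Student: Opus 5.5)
We propose to prove Lemma~\ref{lem: tech3} by a direct manipulation of the combined determinant, using Lemma~\ref{lem: tech2} as the main engine. The starting observation is that the product $\det(J(\lambda))\det([\hhat_1,\dots,\hhat_{k-1},\hb_k])$ is itself a single determinant of the block-triangular form seen in the example after Lemma~\ref{lem: C alpha operator expression}. Concretely, we append the $k$ columns $\hhat_1,\dots,\hhat_{k-1},\hb_k$ to the column list of $J(\lambda)$, with indices shifted by the size $N=\lambda_1+2\ell-1$ of $J(\lambda)$. This gives the matrix $[\h_{\lambda_\ell},\dots,\h_{\lambda_1+\ell-1},\hhat_\ell,\dots,\hhat_{\lambda_1+\ell-1},\hhat_{N+1},\dots,\hhat_{N+k-1},\hb_{N+k}]$. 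The lower-left block vanishes, because an index in the first $N$ columns drops below zero after descending $N$ rows. Since the operator product respects column order, the determinant factorizes as the stated product.

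Next we move $\hb_{N+k}$ leftward past the $\hhat$ columns. The $\hhat$-indices present form the run $\ell,\dots,\lambda_1+\ell-1$ followed by $N+1,\dots,N+k-1$, with gaps. We pad with the missing $\hhat$-columns and note that columns with repeated $\hhat$-indices kill the determinant. The gaps are then handled by applying Lemma~\ref{lem: tech1} repeatedly; its specialization, Lemma~\ref{lem: tech2}, covers exactly a contiguous run $\hhat_a,\dots,\hhat_{a+\ell-1}$ sitting left of $\hb$. Each application replaces $\hb_{c}$ by $\hb_{c-i}$ standing in front of a $\hhat$-sequence with one index deleted, with weights $(-q)^{i-1}(1-q)$, plus the $i=0$ term.

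Once $\hb$ sits immediately after the $\h$-columns, we merge it. Using $\hb_j=\h_j-\hhat_j$, the $\hhat_j$ part either duplicates an existing $\hhat$-column, giving zero, or recombines. What remains is a matrix whose first $\ell+1$ columns are $\h$'s and whose remaining columns are consecutive $\hhat$'s after reindexing. We then recognize each surviving term as $\pm\det(J(\mu))$ for a partition $\mu$ with $\ell+1$ rows, or with $\ell$ rows when a column collapses. In that recognition, the deleted $\hhat$-index records where the new row of $\mu$ ends, and the allowed deletions correspond exactly to horizontal strips of size $k$ added to $\lambda$. The $\h$-part is additionally reordered using the commutation \eqref{eq: sw}, i.e.\ $\h_a\h_b=\h_b\h_a$ and $\hhat_a\hhat_b=\hhat_b\hhat_a$.

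The main obstacle is the coefficient bookkeeping. The factors $(-q)^{i-1}(1-q)$ coming from the successive gaps must telescope to a single $q^{-(\mu_1-1)}$ per $\mu$, with overall sign $(-1)^{k-1}$. Here we need the leading $(-q)^{\ell}$ on the left of Lemma~\ref{lem: tech2} to be divided out, so that $(-q)^{-\#\text{shifts}}$ appears. We would first verify the case $\lambda=\emptyset$, where $J(\emptyset)$ is empty and the claim reduces to $\det([\hhat_1,\dots,\hhat_{k-1},\hb_k])=(-1)^{k-1}q^{-(k-1)}\h_k$. This case follows from Lemma~\ref{lem: tech2} with $v$ empty and checks the normalization. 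We would then check $\lambda=(1)$ to fix the sign conventions before doing the general telescoping. The horizontal-strip condition $k\ge\lambda_1$ ensures that the gap structure is uniform, so a single induction on the number of gaps should suffice.
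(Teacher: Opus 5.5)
Your first half is essentially the paper's argument. You merge the two factors into one block-triangular determinant and append a trailing $\hhat$ column. That column is allowed only at the very end, where the new last row is $(0,\dots,0,1)$. You then slide $\hb$ to the left with Lemma~\ref{lem: tech1} and Lemma~\ref{lem: tech2}, and turn $\hb$ into $\h$ because the $\hhat$ part duplicates a column. One correction: the shift must be the size $\lambda_1+\ell$ of $J(\lambda)$, not $\lambda_1+2\ell-1$. This expresses $(-q)^{\lambda_1+k-1}$ times the left side as a combination of exactly $\lambda_1+1$ determinants $\det([\h_{\lambda_\ell},\dots,\h_{\lambda_1+\ell-1},\h_{\lambda_1+\ell+k-j},w^{(j)}])$, $0\le j\le\lambda_1$. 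The weights are $1$ for $j=0$ and $(-q)^{j-1}(1-q)$ otherwise.

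\textbf{The gap is what you do with these terms: they are not individually $\pm\det(J(\mu))$.}
\begin{itemize}
\item For $j<\lambda_1$ the $\hhat$-indices still skip the interior value $\lambda_1+\ell-j$, and no reindexing removes that hole. Only the $j=\lambda_1$ term is literally a $J(\mu)$, namely for $\mu=(k,\lambda_1,\dots,\lambda_\ell)$.
\item The missing idea is to pass to symmetric functions via $\Phi$. This is legitimate because every $\h$ column precedes every $\hhat$ column and each type commutes. By the skew version of the argument in Lemma~\ref{lem: Schur Jacobi Trudi}, the $j$-th term equals $(-1)^{\lambda_1-j}\Phi(s_{R(j)})$ with $s_{R(j)}=h^{\perp}_{\lambda_1-j}\,s_{(\lambda_1+k-j,\lambda_1,\dots,\lambda_\ell)}$.
\item The Pieri rule turns $s_{R(j)}$ into $\sum s_\mu$ over horizontal $k$-strips $\mu/\lambda$ with $\mu_1\le\lambda_1+k-j$. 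So a fixed $\mu$ occurs in every term with $0\le j\le\lambda_1+k-\mu_1$.
\item That overlap is what makes the telescoping you anticipate work: $1+\sum_{j=1}^{\lambda_1+k-\mu_1}(q-1)q^{j-1}=q^{\lambda_1+k-\mu_1}$.
\end{itemize}

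\textbf{A one-to-one ``deleted index $\leftrightarrow$ strip'' correspondence cannot hold.}
\begin{itemize}
\item For $\lambda=(2,1)$, $k=2$ there are three terms but four strips: $(4,1),(3,2),(3,1,1),(2,2,1)$.
\item For $\lambda=(1)$, $k=1$ the $j=0$ term is $\det([\h_1,\h_3,\hhat_1,\hhat_3])=-\Phi(s_{2}+s_{11})$. So $\mu=(1,1)$ collects $-1$ from $j=0$ and $1-q$ from $j=1$.
\item Your normalization check has the same confusion. For $\lambda=\emptyset$ the right side is $(-1)^{k-1}q^{-(k-1)}\det(J((k)))=(-1)^{k-1}q^{-(k-1)}\Phi(h_k)$, not a multiple of $\h_k$. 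For $k=1$, $\det(J((1)))=\h_1-\hhat_1=\hb_1$.
\end{itemize}

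Since $\det(J(\mu))=\Phi(s_\mu)$ encodes $s_\mu[Y-Z]$, the surviving terms must be identified through this skew-Schur/Pieri expansion, not by matching matrices.
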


\begin{proof}
    First, observe that 
    \[
    \det([\hhat_1,\dots,\hhat_{k-1},\hb_{k}]) = \det([\hhat_1,\dots,\hhat_{k-1},\hb_{k},\hhat_{k}]),
    \]
    since the last row of the matrix $[\hhat_1,\dots,\hhat_{k-1},\hb_{k},\hhat_{k}]$ is $(0,\dots,0,1)$.
    By recursively applying Lemma~\ref{lem: tech1} to move $\hb_k$ to the left, we obtain
    \begin{equation*}
        (-q)^{k-1}\det([\hhat_1,\dots,\hhat_{k-1},\hb_{k},\hhat_{k}]) = \det([\hb_k,\hhat_1,\dots,\hhat_{k}]).
    \end{equation*}
   Therefore we have
    \begin{align*}
        &(-q)^{\lambda_1+k-1}\det(J(\lambda))\det([\hhat_1,\dots,\hhat_{k-1},\hb_{k}]) \\
        &\quad = (-q)^{\lambda_1}\det(J(\lambda))\det([\hb_k,\hhat_1,\dots,\hhat_{k}]) \\
        &\quad = (-q)^{\lambda_1}\det([\h_{\lambda_{\ell}},\dots,\h_{\lambda_{1}+\ell-1},\hhat_{\ell},\hhat_{\ell+1},\dots,\hhat_{\lambda_{1}+\ell-1},\hb_{\lambda_1+\ell+k},\hhat_{\lambda_{1}+\ell+1},\dots, \hhat_{\lambda_{1}+\ell+k}]).
    \end{align*}
    Applying Lemma~\ref{lem: tech2} to this expression yields
    \begin{equation}\label{eq: expansion sum}
        \det([\h_{\lambda_{\ell}},\dots,\h_{\lambda_{1}+\ell-1},\hb_{\lambda_1+\ell+k},w^{(0)}])
        + \sum_{j=1}^{\lambda_1}(-q)^{j-1}(1-q)\det([\h_{\lambda_{\ell}},\dots,\h_{\lambda_{1}+\ell-1},\hb_{\lambda_1+\ell+k-j},w^{(j)}]),
    \end{equation}
    where $w^{(j)}$ is the sequence obtained by removing $\hhat_{\lambda_1+\ell-j}$ from the sequence $(\hhat_{\ell},\dots,\hhat_{\lambda_{1}+\ell+k})$.
    
    For each $0 \leq j \leq \lambda_1$, the sequence $w^{(j)}$ contains the operator $\hhat_{\lambda_1+\ell+k-j}$. Since $\hb_i = \h_i - \hhat_i$, we can perform a column operation adding the column $\hhat_{\lambda_1+\ell+k-j}$ to the column $\hb_{\lambda_1+\ell+k-j}$. This simplifies the determinant as
    \begin{equation}\label{eq: hb to h}
        \det([\h_{\lambda_{\ell}},\dots,\h_{\lambda_{1}+\ell-1},\hb_{\lambda_1+\ell+k-j},w^{(j)}]) = \det([\h_{\lambda_{\ell}},\dots,\h_{\lambda_{1}+\ell-1},\h_{\lambda_1+\ell+k-j},w^{(j)}]).
    \end{equation}
    Analogous to the proof of Lemma~\ref{lem: Schur Jacobi Trudi}, the right-hand side of \eqref{eq: hb to h} equals $(-1)^{\lambda_1-j}\Phi(s_{R(j)})$, where $R(j)$ is a skew shape $R/\nu$ for
    \begin{align*}
        R=(\underbrace{\lambda_1+k-j,\dots,\lambda_1+k-j}_{\ell},k), \quad \text{and} \quad
        \nu=(\lambda_{1}-\lambda_{\ell}+k-j,\lambda_1-\lambda_{\ell-1}+k-j,\dots,\lambda_{1}-\lambda_{1}+k-j).
    \end{align*}
   Rotating the skew shape $R(j)$ by $180^\circ$ yields the skew shape $(\lambda_1+k-j,\lambda_1,\dots,\lambda_{\ell})/(\lambda_1-j)$. Therefore we have 
    \begin{equation*}
        s_{R(j)}=s_{(\lambda_1+k-j,\lambda_1,\dots,\lambda_{\ell})/(\lambda_1-j)}=h_{\lambda_1-j}^{\perp}s_{(\lambda_1+k-j,\lambda_1,\dots,\lambda_{\ell})}.
    \end{equation*}
    We conclude that $ s_{R(j)}= \sum_{\mu} s_{\mu}$, where the sum runs over all partitions $\mu$ obtained by adding a horizontal strip of size $k$ to $\lambda$ such that $\mu_1 \leq \lambda_1+k-j$.
    
    Substituting this back into \eqref{eq: expansion sum}, the expression becomes
    \begin{align*}
    (-1)^{\lambda_1} \Phi \left( s_{R(0)} + \sum_{j=1}^{\lambda_1}(q-1)q^{j-1}s_{R(j)} \right).
    \end{align*}

    We now compute the coefficient of $s_{\mu}$ in $( s_{R(0)} + \sum_{j=1}^{\lambda_1}(q-1)q^{j-1}s_{R(j)})$. For a fixed $\mu$ obtained by adding a horizontal strip of size $k$ to $\lambda$, the term $s_{\mu}$ appears in $s_{R(j)}$ if and only if $0 \leq j \leq \lambda_1+k-\mu_1$.
    Therefore, the total coefficient of $s_{\mu}$ becomes
    \[
    1 + \sum_{j=1}^{\lambda_1+k-\mu_1}(q-1)q^{j-1} = q^{\lambda_1+k-\mu_1}.
    \]
    Thus, we have derived
    \[
    (-q)^{\lambda_1+k-1} \cdot \text{LHS} = (-q)^{\lambda_1+k-1}\det(J(\lambda))\det([\hhat_1,\dots,\hhat_{k-1},\hb_{k}])= (-1)^{\lambda_1} \sum_{\mu} q^{\lambda_1+k-\mu_1} \Phi(s_{\mu})
    \]
     where the sum runs over all partitions $\mu$ that can be obtained by adding a horizontal strip of size $k$ to $\lambda$. Dividing both sides by $(-q)^{\lambda_1+k-1}$ completes the proof together with Lemma~\ref{lem: Schur Jacobi Trudi}.
\end{proof}

\begin{proof}[Proof of Proposition~\ref{prop: uniform description}]
    For a composition $\alpha$, let $H(\alpha)$ be the matrix defined in Lemma~\ref{lem: C alpha operator expression}. We prove the following stronger claim: for any partition $\lambda$, there exists a linear combination $s_{\lambda}=\sum_{\alpha}d_{\lambda,\alpha}C_{\alpha}$ such that
    \begin{equation*}
        \det(J(\lambda))=\sum_{\alpha}d_{\lambda,\alpha}\det(H(\alpha)).
    \end{equation*}
    
    We proceed by induction on the length of $\lambda$. Let $\mu=(\lambda_2,\dots,\lambda_{\ell})$ (note that $\mu$ may be empty). Then we have \cite[Proposition 3.6]{HMZ12}
    \begin{equation*}
        s_{\lambda}=(-q)^{\lambda_1-1}\sum_{i\geq 0} \mathbf{C}_{\lambda_1+i}\left(e^{\perp}_i s_{\mu}\right).
    \end{equation*}
    Let $V_i$ denote the set of partitions obtained by removing a vertical strip of size $i$ from $\mu$. For each $\nu \in V_i$, by the induction hypothesis, there exist constants $d_{\nu,\beta}$ such that
    \begin{equation*}
        s_{\nu}=\sum_{\beta}d_{\nu,\beta}C_{\beta} \quad \text{and} \quad \det(J(\nu))=\sum_{\beta}d_{\nu,\beta}\det(H(\beta)).
    \end{equation*}
    Applying $\mathbf{C}_{\lambda_1+i}$ corresponds to prepending the part $\lambda_1+i$ to the composition. Therefore we have
    \begin{equation*}
        \mathbf{C}_{\lambda_1+i}s_{\nu}=\sum_{\beta}d_{\nu,\beta}C_{(\lambda_1+i,\beta)}.
    \end{equation*}
    We now compute the corresponding sum of determinants. Using the factorization property of $\det(H(\alpha))$ and Lemma~\ref{lem: tech3}, we have
    \begin{align*}
        \sum_{\beta}d_{\nu,\beta}\det(H((\lambda_1+i,\beta)))
        &= \left(\sum_{\beta}d_{\nu,\beta}\det(H(\beta))\right) \det([\hhat_{1},\dots,\hhat_{\lambda_1+i-1},\hb_{\lambda_1+i}]) \\
        &= \det(J(\nu))\det([\hhat_{1},\dots,\hhat_{\lambda_1+i-1},\hb_{\lambda_1+i}]) \\
        &= (-1)^{\lambda_1+i-1}\sum_{\tau\in W(\nu,\lambda_1+i)}\frac{\det(J(\tau))}{q^{\tau_1-1}},
    \end{align*}
    where $W(\nu, k)$ denotes the set of partitions obtained by adding a horizontal strip of size $k$ to $\nu$.
    
    Recall that the Schur function satisfies the relation
    \begin{equation*}
        \left(\sum_{i\geq 0}(-1)^{i}h_{\lambda_1+i}e_{i}^{\perp}\right)s_{\mu}=s_{\lambda}.
    \end{equation*}
    Therefore we conclude
    \begin{equation*}
        \sum_{i\geq 0}\sum_{\nu\in V_i}(-1)^{\lambda_1+i-1}\sum_{\tau\in W(\nu,\lambda_1+i)}\frac{\det(J(\tau))}{q^{\tau_1-1}} = (-q)^{-\lambda_1+1}\det(J(\lambda)).
    \end{equation*}
    Consequently, by expressing $s_{\lambda}$ as
    \begin{equation*}
        s_{\lambda} = (-q)^{\lambda_1-1}\sum_{i \ge 0}\sum_{\nu\in V_i}\sum_{\beta} d_{\nu,\beta}C_{(\lambda_1+i,\beta)},
    \end{equation*}
    we have
    \begin{equation*}
       (-q)^{\lambda_1-1} \sum_{i \ge 0}\sum_{\nu\in V_i}\sum_{\beta}d_{\nu,\beta}\det(H(\lambda_1+i,\beta)) = \det(J(\lambda))
    \end{equation*}
    which proves the claim.
\end{proof}

\subsection{Loehr--Warrington formula}\label{subsec: alternative proof of LW}
In the previous subsection, we showed $s_{\lambda}[-MX^{m,n}]\cdot 1 =\Omega\left(\det(J(\lambda))\cdot 1\right)$. In this subsection, we introduce a modified matrix $J'(\lambda)$ such that:
\begin{enumerate}
    \item $\det(J(\lambda))=(-q)^{\adj(\lambda)}\det(J'(\lambda))$
    \item  $\det(J'(\lambda))\cdot 1$ exhibits a direct connection to the Loehr--Warrington formula when $n=1$.
\end{enumerate}
For technical details of the proof of (1) and (2), we refer to \cite{KO24}. Combining (1) and (2), we yield a new proof of the Loehr--Warrington conjecture regarding $\nabla^{m}s_{\lambda}$.  

Fix a partition $\lambda=(\lambda_1,\lambda_2,\dots,\lambda_{\ell})$ and let $s$ be the size of the \emph{Durfee square}, which is the maximal number such that $\lambda_s\geq s$ (if it does not exist, $s=0$). We define an \textit{adjustment} of $\lambda$ denoted by $\adj(\lambda)$ as
\begin{equation*}
    \adj(\lambda)=\sum_{i=1}^{s}(\lambda_i-i).
\end{equation*}
Consider a vector $(\lambda_1+\ell-1,\lambda_2+\ell-2,\dots,\lambda_{\ell},\ell,\ell+1,\dots,\lambda_1+\ell-1)$
 and define $v(\lambda)$ to be the vector obtained by sorting entries in the above vector in weakly increasing order.  We also define a \textit{pivot} of $\lambda$, denoted by $\piv(\lambda)$, to be a vector $(a_1,a_2,\dots,a_s)$ of length $s$ where $a_i$ is a number satisfying
\begin{equation*}
    v(\lambda)_{a_i}=v(\lambda)_{a_i+1}=\lambda_{s+1-i}+\ell-(s+1-i),
\end{equation*}
i.e., the indices where $v(\lambda)$ is non increasing. Finally, we define a matrix $J'(\lambda)=[w_1,w_2,\dots,w_{\lambda_1+\ell}]$ given by
\begin{align*}
    w_i=\begin{cases*}
        \h_{v(\lambda)_i} \quad \text{if $i\leq \ell-s$}\\
        \hb_{v(\lambda)_i} \quad \text{if $i> \ell-s$ and $i$ is an entry of $\piv(\lambda)$}\\
        \hhat_{v(\lambda)_i} \quad \text{if $i> \ell-s$ and $i$ is not an entry of $\piv(\lambda)$}.
    \end{cases*}
\end{align*}
In \cite[Equation (4.3)]{KO24}, the authors showed that $  \det(J(\lambda))=(-q)^{\adj(\lambda)}\det(J'(\lambda))$. Therefore we conclude
\begin{equation}\label{eq: Schur JT}
    s_{\lambda}[-MX^{m,n}]\cdot 1 =(-q)^{\adj(\lambda)}\Omega\left(\det(J'(\lambda))\cdot 1\right).
\end{equation}
In \cite{BHMPS25LW}, a (signed) positive expansion for $s_{\lambda}[-MX^{m,n}]\cdot 1$ was given. We do not know in general how to recover their result from our right-hand side of \eqref{eq: Schur JT}, however when $n=1$ it is directly connected to the Loehr--Warrington formula first stated in \cite{LW08}.

We state the Loehr--Warrington formula in our formulation. Fix a positive integer $m$ and let $n=1$. We identify an element $\pi \in \cPF_{m,1}$ as a pair $(i,j)\in \mathbb{Z}_{\geq0}\times\mathbb{Z}_{\geq1}$ where $i$ is the single entry of an area sequence of $P_{\pi}$ and $j$ is a label for a single north step. We write $(i,j) \succ (i',j')$ if and only if their corresponding elements  $\pi, \pi' \in \cPF_{m,1}$ satisfy  $\pi \succ \pi'$. 

Then we define the \textit{bottom} of $\lambda$, denoted by $\bo(\lambda)$, as a vector given by
\begin{equation*}
\bo(\lambda) = (s+1, s+2, \ldots, s+\lambda_1+\ell) - v(\lambda),
\end{equation*}
where the subtraction is performed element-wise. Let $N=\lambda_1+\ell$ and lastly we associate $D(\lambda)$ to the partition $\lambda$ as a diagram whose $j$-th column consists of cells $(i,j)$'s for $\bo(\lambda)_{N-j+1}\leq i\leq s$. Finally, define a set $\mathcal{T}(\lambda)$ consisting of fillings $T$ of $D(\lambda)$ with elements in $\mathbb{Z}_{\geq0}\times\mathbb{Z}_{\geq1}$ satisfying the following conditions: 
\begin{itemize}
    \item $T(i+1,j)\succ T(i,j)$ and $T(i,j)\nsucc  T(i,j-1)$
    \item for each $j> \ell-s$ if $j\in \piv(\lambda)$ we have $T(\bo(\lambda)_j,N-j+1)_1=0$
    \item for each $j> \ell-s$ if $j\notin \piv(\lambda)$ we have $T(\bo(\lambda)_j,N-j+1)_1>0$
\end{itemize}
here $T(i,j)\in \mathbb{Z}_{\geq0}\times\mathbb{Z}_{\geq1}$ is the filling for a cell $(i,j)$ and $T_1(i,j)$ is its first entry. We naturally regard $T\in \mathcal{T}(\lambda)$ as an element of
\begin{equation*}
    \CH_{s-\bo(\lambda)_N+1}\times \dots \times \CH_{s-\bo(\lambda)_1+1}
\end{equation*}
by regarding fillings in each $j$-th column of $T$ as an element of $\CH_{s-\bo(\lambda)_{N-j+1}+1}$. Under this identification we may define $\stat(T)$ and $z(T)$. The Loehr--Warrington conjecture in \cite{LW08} is the following identity:
\begin{equation}\label{eq: LW conjecture}
    \nabla^{m}s_{\lambda}=(-q)^{\adj(\lambda)}\sum_{T\in \mathcal{T}(\lambda)}q^{\stat(T)}\Omega(z(T)).
\end{equation}
It was first proved by \cite{BHMPS25LW} in a bigger generality for $s_\lambda[-MX^{m,n}]\cdot1$. Later for the case $m=1$, an elementary proof was provided in \cite{KO24}. In particular, the authors showed \cite[Proposition 4.6]{KO24}
\begin{equation}\label{yyyy}
    \det(J'(\lambda))\cdot1=\sum_{T\in \mathcal{T}(\lambda)}q^{\stat(T)}z(T)
\end{equation}
in the case $m=1$, using a combinatorial argument similar to that in \cite[Theorem 6.3]{SW16}.
The proof of \eqref{yyyy} trivially generalizes to arbitrary $m$.
Equations \eqref{eq: Schur JT} and \eqref{yyyy} imply \eqref{eq: LW conjecture}.

\subsection{Proof of Theorem~\ref{thm: main}}
We begin by defining a lifting map $\up$ on cyclic parking functions, which shifts the path's reference point relative to the line $my=nx$.

\begin{definition}
    Given a cyclic $(m,n)$-parking function $\pi \in \cPF_{m,n}$, we view $\pi$ as an infinite periodic labeled path. Let $(a,b)$ be the unique pair of integers satisfying $na - mb = 1$ with $0 \le b < n$. We define the map $\up(\pi) \in \cPF_{m,n}$ to be the cyclic $(m,n)$-parking function obtained by shifting $\pi$ by the vector $(-a,-b)$, such that the lattice point $(a,b)$ on the original path becomes the new origin.
\end{definition}

Geometrically, the point $(a,b)$ is the lattice point strictly below the line $my=nx$ that is closest to the line. It is straightforward to verify that the shifted labeled path $\up(\pi)$ remains weakly above the line $my=nx$, and thus $\up(\pi) \in \cPF_{m,n}$.

\begin{example}
    On the left of Figure~\ref{fig: up operator}, $\pi \in \cPF_{3,2}$ is
represented as an infinite periodic labeled path. The area sequence of
$P_{\pi}$ is $(0,0)$, and the labels of the north steps from the bottom are
$r_{1}$ and $r_{2}$. The closest lattice point strictly below the line
$3y = 2x$ is $(2,1)$, and shifting the entire picture by the vector
$(-2,-1)$ yields $\up(\pi)$, shown on the right.

\end{example}

\begin{figure}[ht]
    \centering
    \begin{tikzpicture}[scale=0.8]
        % --- Definitions ---
        \def\m{3} \def\n{2} \def\k{2}
        \def\M{6} \def\N{4}
    
        \colorlet{path0}{red!70!orange}
        \colorlet{path1}{blue!70!cyan}
        \colorlet{celltext}{gray!70!black}
    
        \tikzset{
            grid line/.style={gray!20, thin},
            axis line/.style={->, >=stealth, thick},
            diag line/.style={dashed, thick, orange!80},
            path line/.style={line width=2pt, cap=round, join=round},
            content node/.style={font=\sffamily\tiny, text=celltext},
            label node/.style={font=\small\bfseries}
        }
    
        % SHEET 0
        \begin{scope}[local bounding box=sheet0]
            \foreach \x in {-1,...,3} {
                \foreach \y in {0,1} {
                    \draw[grid line] (\x,\y) rectangle (\x+1,\y+1);
                    %\pgfmathsetmacro{\cont}{int(0 + \M*\y - \N*(\x+1))}
                    %\node[content node] at (\x+0.5, \y+0.5) {$\cont$};
                }
            }
            \draw[axis line] (-1.2,0) -- (4.2,0) node[right] {};
            \draw[axis line] (0,-0.5) -- (0,2.5) node[above] {};
            \draw[diag line] (-1, -0.66) -- (3.5, 2.33);
            \draw[path line, path0] (-2,-1)--(-2,0)--(0,0) -- (0,1) -- (1,1) -- (1,2) -- (3,2)--(3,3);
            \fill (2,1) circle (2pt);
            \node[label node] at (-0.3,0.5) {$r_1$};
             \node[label node] at (3-0.3,2.5) {$r_1$};
            \node[label node] at (0.7,1.5) {$r_2$};
            \node[label node] at (0.7-3,1.5-2) {$r_2$};
        \end{scope}
    
        % SHEET 1
        \begin{scope}[xshift=7cm, local bounding box=sheet1]
            \foreach \x in {-2,...,2} {
                \foreach \y in {0,1} {
                    \draw[grid line] (\x,\y) rectangle (\x+1,\y+1);
                    %\pgfmathsetmacro{\cont}{int(1 + \M*\y - \N*(\x+1))}
                    %\node[content node] at (\x+0.5, \y+0.5) {$\cont$};
                }
            }
            \draw[axis line] (-2.2,0) -- (3.2,0) node[right] {};
            \draw[axis line] (0,-0.5) -- (0,2.5) node[above] {};
            \draw[diag line] (-2, -1.33) -- (3, 2);
            \draw[path line, path1] (-2,-1)--(-2,0)--(-1,0) -- (-1,1) -- (1,1) -- (1,2) -- (2,2)--(2,3);
             \node[label node] at (-1.3,0.5) {$r_2$};
             \node[label node] at (0.7,1.5) {$r_1$};
            \node[label node] at (-1.3+3,0.5+2) {$r_2$};
             \node[label node] at (0.7-3,1.5-2) {$r_1$};
        \end{scope}
    \end{tikzpicture}
    \caption{$\pi$ and $\up(\pi)$.}
    \label{fig: up operator}
\end{figure}

The following lemmas establish the basic properties of this map (proof left to the reader).

\begin{lem}\label{lem: cyc elementary no proof}
    The map $\up$ is a bijection from $\CH_1$ to $\hat{\CH}_1$ such that
    \begin{equation*}
        \area(P_\pi)+1=\area(P_{\up(\pi)}).
    \end{equation*}
\end{lem}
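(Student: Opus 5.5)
The plan is to work throughout with the periodic picture. We regard $\pi\in\CH_1=\cPF_{m,n}$ as an infinite labeled path invariant under translation by $(m,n)$. For $k=1$ the content of a cell with south-east corner $(x,y)$ is $my-nx$, and we use the same quantity as the ``content'' of a lattice point. We will then read off the path condition, the origin condition and the area directly in terms of contents.

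First, the effect of the shift. Translating a point by $(-a,-b)$ changes its content from $my-nx$ to $m(y-b)-n(x-a)=my-nx+(na-mb)=my-nx+1$. So $\up$ raises the content of every lattice point and every cell by exactly $1$.

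Second, well-definedness and the image.
\begin{itemize}
\item A labeled $(m,n)$-lattice path staying weakly above $my=nx$ means exactly that every lattice point on the periodic path has content $\ge 0$.
\item Since $\gcd(m,n)=1$, the lattice points of content $0$ are precisely the multiples of $(m,n)$. Hence a path avoids the origin if and only if, by periodicity, it avoids all content-$0$ points, i.e.\ all of its lattice points have content $\ge 1$.
\item After the shift, every lattice point of $\up(\pi)$ has content $\ge 1$. So $\up(\pi)$ is weakly (indeed strictly) above the line and misses the origin.
\item To fix the representative, we take the strip $0\le y\le n$ and start the path at its north step crossing height $0$. The labels are carried along with the steps, so the strictly-increasing condition on consecutive north steps is preserved, including the cyclic condition across the boundary. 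Thus $\up(\pi)\in\hat{\CH}_1$.
\item The inverse is translation by $(a,b)$. It lowers all contents by $1$, and for a path in $\hat{\CH}_1$ (all lattice points of content $\ge1$) it produces a path whose lattice points all have content $\ge0$, i.e.\ an element of $\CH_1$.
\end{itemize}
The two maps are mutually inverse, so $\up$ is a bijection.

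Third, the area. The area counts cells, taken modulo translation by $(m,n)$, that lie below the path and have content $\ge 0$. After the shift this becomes the number of cells of the original picture below $P_\pi$ with content $\ge -1$. By coprimality, contents of cells in one period are pairwise distinct, so exactly one extra cell $c$ (content $-1$) is added, provided $c$ lies below $P_\pi$. We check this as follows.
\begin{itemize}
\item Let $c$ have south-east corner $(x,y)$, so $my-nx=-1$.
\item Let the north step of $P_\pi$ in row $[y,y+1]$ start at $(x_0,y)$; the path condition gives $my-nx_0\ge 0$.
\item Then $nx=my+1>nx_0$, so $x\ge x_0+1$. The left edge $x-1$ of $c$ is therefore weakly right of the north step, so $c$ lies below the path.
\end{itemize}
Hence $\area(P_{\up(\pi)})=\area(P_\pi)+1$.

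The only point needing care is this last step: making the ``area $=$ number of cells of nonnegative content below the path, per period'' count precise in the periodic setting, and justifying that the content-$-1$ cell is the unique new cell and is indeed below the path. Everything else is a direct translation argument.
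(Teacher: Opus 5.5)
Your proof is correct and is the routine verification the paper leaves to the reader: translation by $(-a,-b)$ raises every content by $na-mb=1$, so by coprimality it identifies periodic labeled paths whose lattice points all have content $\ge 0$ with those whose lattice points all have content $\ge 1$ (i.e.\ those avoiding the origin). The area count gains exactly the single translation orbit of content-$(-1)$ cells, and you correctly show that this orbit lies under $P_\pi$.
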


\begin{lem}\label{lem: rational cycling rule}
    For $(\pi^{(1)},\dots,\pi^{(k)}) \in \cPF_{m,n}^{k}$, we have
    \begin{equation*}
        \stat(\pi^{(1)},\dots,\pi^{(k)}) = \stat(\up(\pi^{(k)}), \pi^{(1)}, \dots, \pi^{(k-1)}).
    \end{equation*}
\end{lem}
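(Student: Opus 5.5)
We have to show that $\stat = C(m,n,k)-\pdinv-\ldinv$ is unchanged when the last component $\pi^{(k)}$ is moved to the front and replaced by $\up(\pi^{(k)})$. Since $C(m,n,k)$ depends only on $(m,n,k)$, it suffices to show that $\pdinv(P_{\pi^\bullet})+\ldinv(\pi^\bullet)$ is preserved. The approach is to describe the new tuple in terms of contents: shifting the path by $(-a,-b)$ with $na-mb=1$ and moving it from sheet $k$ to sheet $1$ should act on contents as a uniform shift.

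\textbf{Step 1 (contents).} Take a cell of $\pi^{(k)}$ with south-east corner $(x,y)$. Its content is $k(my-nx)+k-1$. After the shift its corner is $(x-a,y-b)$ in sheet $1$, so its content becomes
\[
k\bigl(m(y-b)-n(x-a)\bigr)=k(my-nx)+k .
\]
Hence every content of $\pi^{(k)}$ increases by exactly $1$. For $\ell<k$, the component $\pi^{(\ell)}$ moves from sheet $\ell$ to sheet $\ell+1$, so each of its contents also increases by exactly $1$. Every content in the tuple therefore shifts by $+1$. This shift preserves all strict inequalities $\cont(N)<\cont(E)$, and it preserves both bounds $\cont(N)<\cont(N')<\cont(N)+km$.

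\textbf{Step 2 (matching steps and labels).} Steps and labels correspond bijectively between the two tuples, so $\ldinv$ is literally preserved. The same holds for $\pdinv$, provided the steps of $\up(\pi^{(k)})$ are matched with steps of $\pi^{(k)}$ in the right way. Here $\up(\pi^{(k)})$ is a different fundamental window of the same periodic path. Its north and east steps are therefore the $(m,n)$-translates of those of $\pi^{(k)}$, possibly cyclically rotated, and each window still contains $n$ north and $m$ east steps. A translation by $(m,n)$ changes content by $k(mn-nm)=0$, so we can choose representatives in the original window. This gives a content-preserving (up to the global $+1$) bijection of steps that respects labels.

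\textbf{Step 3 (main obstacle).} We must check that the window convention does not affect the statistics. The statistics are counted over the finite windows, so the set of contents is what matters. By Step 2, the contents appearing in the window of $\up(\pi^{(k)})$ are exactly those of $\pi^{(k)}$ plus one. The boundary conditions also need checking: the virtual step label for the cyclic condition and the path starting with a north step. These do not enter $\pdinv$ or $\ldinv$ at all.

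The delicate point is whether $\pdinv$ counts pairs of steps as fixed objects or modulo translation. I would verify directly from the definitions that the counts depend only on the multiset of (content, label, type) data of the steps. By Step 1 this multiset transforms by a uniform $+1$ shift, which proves the lemma.
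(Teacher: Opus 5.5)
Your proof is correct and follows the same argument as the paper, whose proof is the single line ``Pairs contributing to $\pdinv$ and $\ldinv$ are all preserved.'' Your Step 1 supplies the reason that one-liner works: every step content shifts by exactly $+1$, via $na-mb=1$ for the moved component and the sheet shift for the others. Your Step 2 handles the change of fundamental window, using that contents are invariant under $(m,n)$-translation and that each translation class of steps has exactly one representative in each window.
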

\begin{proof}
    Pairs contributing to $\pdinv$ and $\ldinv$ are all preserved.
\end{proof}

It is straightforward that $\pi \prec \tau$ if and only if $\up(\pi) \prec \up(\tau)$. Therefore we can construct a bijection between $\CH_a$ and $\hat{\CH}_a$ by sending each $(\pi^{(1)},\cdots,\pi^{(a)})$ to $(\up(\pi^{(1)}),\cdots,\up(\pi^{(a)}))$. Combining with Lemmas~\ref{lem: cyc elementary no proof} and \ref{lem: rational cycling rule} we conclude the following: for any monomial $v$ consisting of $\h_i$, $\hhat_i$ or $\hb_i$ we have
\begin{equation*}
    \Omega\left(v \hhat_a \cdot 1\right)=t^{a} \Omega\left(\h_a v \cdot 1\right).
\end{equation*}
Specifically, we derive $\Omega\left(\h_{\lambda}\hhat_{\mu}\cdot 1\right) = t^{|\mu|}\Omega\left(\h_{(\lambda,\mu)} \cdot 1\right)$. Here, $(\lambda,\mu)$ is an integer vector given by a concatenation. We caution the reader that as operators, $\h_{\lambda}\hhat_{\mu} \neq t^{|\mu|}\h_{(\lambda,\mu)}$, but equality holds only when applied to $1$ and after specializing each $z_{i,j,k}$ to $t^{j}x_k$. Combining this fact with Proposition~\ref{prop: uniform description}, we obtain:

\begin{corollary}\label{cor: final}
    For any symmetric function $f$, we have
    \begin{equation*}
        f[-MX^{m,n}]\cdot 1 = \Omega\left((\sum_{\lambda} c_{\lambda} \h_{\lambda}) \cdot 1\right),
    \end{equation*}
    where $c_{\lambda}$ is the coefficient of $h_{\lambda}[Y]$ in the expansion of $f[(1-t)Y]$.
\end{corollary}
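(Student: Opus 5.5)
We start from Proposition~\ref{prop: uniform description}, which gives $f[-MX^{m,n}]\cdot 1=\Omega(\Phi(f)\cdot 1)$. Expanding $f[Y-Z]=\sum_{\lambda,\mu}c_{\lambda,\mu}h_\lambda[Y]h_\mu[Z]$, we get
\[
\Phi(f)\cdot 1=\sum_{\lambda,\mu}c_{\lambda,\mu}\,\h_\lambda\hhat_\mu\cdot 1 .
\]
The task is therefore to evaluate $\Omega(\h_\lambda\hhat_\mu\cdot 1)$ for each term.

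The key identity to use is the one derived just before the corollary:
\[
\Omega(v\,\hhat_a\cdot 1)=t^a\,\Omega(\h_a v\cdot 1)
\]
for any monomial $v$. It follows from the componentwise bijection $\up\colon \CH_a\to\hat{\CH}_a$. By Lemma~\ref{lem: cyc elementary no proof} this bijection raises the area of each component by one. By Lemma~\ref{lem: rational cycling rule}, cycling the last block to the front and applying $\up$ preserves $\stat$. One should also check that the prefactor $q^{\stat(\tau^\bullet,\pi^\bullet)-\stat(\tau^\bullet)}$ telescopes, so that $v\,\hhat_a\cdot 1$ is a sum over concatenated tuples weighted by $q^{\stat}$ of the whole tuple.

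We apply this identity repeatedly, peeling the factors $\hhat_{\mu_j}$ off the right, i.e.\ those applied first to $1$. Each step moves one factor to the far left as an $\h$ and contributes $t^{\mu_j}$. After all factors are moved this gives
\[
\Omega(\h_\lambda\hhat_\mu\cdot 1)=t^{|\mu|}\,\Omega(\h_{\lambda'}\cdot 1),
\]
where $\lambda'$ is a rearrangement of the concatenation $(\lambda,\mu)$. Since the $\h_i$ commute by \eqref{eq: sw}, the order is irrelevant, and we obtain $t^{|\mu|}\Omega(\h_{(\lambda,\mu)}\cdot 1)$.

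Hence
\[
f[-MX^{m,n}]\cdot 1=\Omega\Big(\sum_{\lambda,\mu}c_{\lambda,\mu}\,t^{|\mu|}\,\h_{(\lambda,\mu)}\cdot 1\Big).
\]
It remains to identify the coefficients. Substitute $Z=tY$ in $f[Y-Z]=\sum c_{\lambda,\mu}h_\lambda[Y]h_\mu[Z]$. This plethystic specialization is a ring map sending $h_\mu[Z]$ to $h_\mu[tY]=t^{|\mu|}h_\mu[Y]$. We obtain
\[
f[(1-t)Y]=\sum_{\lambda,\mu}c_{\lambda,\mu}\,t^{|\mu|}\,h_{(\lambda,\mu)}[Y].
\]
Collecting terms by the multiset of $(\lambda,\mu)$, and using the commutativity of the $\h_i$ to match $\h_{\nu}$ with $h_\nu$ for sorted $\nu$, gives exactly $\sum_\nu c_\nu \h_\nu$ with $c_\nu$ the coefficient of $h_\nu[Y]$ in $f[(1-t)Y]$. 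This proves the corollary.

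The main obstacle is making the identity $\Omega(v\,\hhat_a\cdot1)=t^a\Omega(\h_a v\cdot1)$ rigorous, since $\up$ must be applied to a whole block of $a$ components at once. Lemma~\ref{lem: rational cycling rule} handles one component at a time, so I would iterate it $a$ times, using $\pi\prec\tau\iff\up(\pi)\prec\up(\tau)$ so that the chain condition is preserved. A further point of care is that $\Omega$ forgets the index $i$ of $z_{i,j,k}$, so that the rotation of $\up$ (which permutes row indices) is harmless after specialization.
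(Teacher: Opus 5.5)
Your proposal is correct and follows the paper's own argument. The componentwise bijection $\up\colon\CH_a\to\hat{\CH}_a$, together with Lemmas~\ref{lem: cyc elementary no proof} and~\ref{lem: rational cycling rule}, gives $\Omega(v\,\hhat_a\cdot 1)=t^a\,\Omega(\h_a v\cdot 1)$, hence $\Omega(\h_\lambda\hhat_\mu\cdot 1)=t^{|\mu|}\,\Omega(\h_{(\lambda,\mu)}\cdot 1)$, and this is then combined with Proposition~\ref{prop: uniform description}; your substitution $Z=tY$ and your $a$-fold iteration of the cycling lemma spell out details the paper leaves implicit.
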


\begin{proof}[Proof of Theorem~\ref{thm: main}]
    We have
    \begin{equation*}
        e_{(1^k)}[(1-t)X] = \left( (1-t)e_{1}[X] \right)^k = (1-t)^k (h_1[X])^k = (1-t)^k h_{(1^k)}[X].
    \end{equation*}
    Applying Corollary~\ref{cor: final}, we immediately obtain
    \begin{equation*}
        e_{(1^k)}[-MX^{m,n}]\cdot 1 = (1-t)^k \Omega\left(\h_{(1^k)} \cdot 1\right)=(1-t)^k\sum_{\pi^\bullet \in\cPF^k_{m,n}} q^{\stat(\pi^\bullet)} t^{\area(P_{\pi^\bullet})} x^{f_{\pi^{\bullet}}}. 
    \end{equation*}
\end{proof}
Since we have
\(    \Omega\left(\h_{(1^k)} \cdot 1\right)=\Omega\left(\h_{(1^{k-1})}(\hhat_1+\hb_1) \cdot 1\right)=t\Omega\left(\h_{(1^{k})}\cdot1\right) +\Omega\left((\h_{(1^{k-1})}\hb_1) \cdot 1\right),\) we also deduce
\begin{align}
 \nonumber   e_{(1^k)}[-MX^{m,n}]\cdot 1&=(1-t)^{k-1}\Omega\left((\h_{(1^{k-1})}\hb_1) \cdot 1\right)\\&=(1-t)^{k-1}\sum_{\substack{\pi^\bullet \in\cPF^k_{m,n}\\ \aseq(P_{\pi^{(1)}})_1=0}} q^{\stat(\pi^\bullet)} t^{\area(P_{\pi^\bullet})} x^{f_{\pi^{\bullet}}}.\label{eq: Wilson compact}
\end{align}

\subsection{Further study}
Proposition~\ref{prop: uniform description} and Corollary~\ref{cor: final} provide a powerful tool for studying $f[-MX^{m,n}]\cdot 1$ for various symmetric functions $f$. In a sequel paper, we will present a formula for $e_{\lambda}[-X^{m,n}]\cdot 1$ that generalizes \cite[Theorem~3.1]{CM25}, which corresponds to the special case $n=1$.

The following problems are natural directions for future research:
\begin{enumerate}
    \item Derive a (signed) positive expression for $\det(J(\lambda))\cdot 1$, providing an alternative formula for the $(m,n)$-version of the Loehr--Warrington formula \cite{BHMPS25LW}. Note that the case $n=1$ was treated in Section~\ref{subsec: alternative proof of LW}.
    \item Find a positive expression for
    \[
        \frac{e_{\lambda}[-MX^{m,n}]\cdot 1}{(1-t)^{\ell(\lambda)-1}}.
    \]
    According to \cite{CGHM24,GL23}, such a formula would yield the shuffle theorem for a link whose components are certain cables of a torus knot.
\end{enumerate}

\bibliographystyle{alpha}  
\bibliography{SFTL.bib} 

\end{document}